\documentclass[11pt]{amsart}

\usepackage[article]{robertpreamble}
\usepackage[margin=1in]{geometry}

\begin{document}


\title{Sharp Gaussian Isoperimetry along a Ricci Flow}
\author{Robert Koirala}
\address{Department of Mathematics, University of California San Diego}
\email{rkoirala@ucsd.edu}
\date{\today}

\begin{abstract}
    We prove the sharp Gaussian isoperimetric inequality for conjugate heat-kernel measures along a Ricci flow via a monotonicity formula. As consequences, we obtain the exact Gaussian enlargement theorem and a Gaussian-quantile two-set concentration estimate. In particular, this recovers the exponential concentration estimate of Hein--Naber from a sharper isoperimetric profile. We also derive Gaussian rearrangement inequalities, recover the sharp Hein--Naber log-Sobolev inequality, and identify the universal Gaussian-model constants in Bamler's \(L^p\)-Poincaré inequalities. Further applications include Gaussian-profile localization near Bamler's \(H_n\)-centers, convex-order and moment estimates for logarithmic derivatives of the conjugate heat kernel, reverse hypercontractivity, entropy-regular profile stability, and a path-space Bobkov inequality.
\end{abstract}

\maketitle 
\tableofcontents


\section{Introduction}

The Gaussian isoperimetric inequality, proved independently by Borell \cite{MR399402} and Sudakov--Tsirelson \cite{MR365680}, is one of the rigid features of the Euclidean heat kernel; see also \cite[Theorem~2.4.6]{MR4573029}. It says that half-spaces minimize Gaussian perimeter among all sets of prescribed Gaussian measure. More precisely, let \(\gamma_\tau\) be the centered Gaussian measure on \(\mathbb R^n\) with covariance \(2\tau\operatorname{Id}_n\), namely \(d\gamma_\tau(x) = (4\pi\tau)^{-n/2}e^{-|x|^2/4\tau}\,dx.\) Then every finite-perimeter set \(E\subset\mathbb R^n\) satisfies
\begin{align}
    \operatorname{Per}_{\gamma_\tau}(E)
    \ge
    \frac1{\sqrt{2\tau}}I(\gamma_\tau(E)),
    \label{eq:euclidean-GI}
\end{align}
where
\begin{align*}
    \Phi(r)=\frac1{\sqrt{2\pi}}\int_{-\infty}^r e^{-z^2/2}\,dz,
    \qquad
    \varphi=\Phi',
    \qquad
    I=\varphi\circ\Phi^{-1}.
\end{align*}
If \(H_c=\{x_1<c\}\) is a half-space with \(\gamma_\tau(H_c)=a\), then \(c=\sqrt{2\tau}\,\Phi^{-1}(a)\), and \(\operatorname{Per}_{\gamma_\tau}(H_c)=\frac1{\sqrt{2\tau}}I(a).\) Thus a half-space of \(\gamma_\tau\)-measure \(a\) optimizes \eqref{eq:euclidean-GI} among all sets of \(\gamma_\tau\)-measure \(a\).

The purpose of this paper is to prove the corresponding sharp inequality for conjugate heat-kernel measures along a Ricci flow. We then use this isoperimetric profile to revisit several estimates in the heat-kernel geometry of Ricci flow, developed by Hein--Naber and Bamler after Perelman \cite{perelman2002entropy,MR3245102,bamler2020entropy}. These estimates play a central role in Bamler's structure theory of Ricci flow \cite{bamler2020structure}.

Let \((M^n,g_t)_{t\in[s,t_0]}\) be a smooth closed Ricci flow \cite{MR664497},
\begin{align*}
    \partial_tg_t=-2\operatorname{Ric}_{g_t}.
\end{align*}
Fix a spacetime point \((x_0,t_0)\), put \(\tau=t_0-s\), and let
\begin{align*}
    d\nu(y)=d\nu_{x_0,t_0;s}(y)
    =
    K(x_0,t_0;y,s)\,dg_s(y)
\end{align*}
be the conjugate heat-kernel probability measure based at \((x_0,t_0)\) and evaluated at time \(s\).

Our main result is the following sharp Gaussian isoperimetric inequality. For a Borel set \(E\subset M\), define its weighted perimeter with respect to \(\nu\) by relaxation:
\begin{align*}
    \operatorname{Per}_{\nu}(E)
    =
    \inf
    \left\{
    \liminf_{j\to\infty}
    \int_M |\nabla f_j|_{g_s}\,d\nu
    \right\},
\end{align*}
where the infimum is taken over smooth functions \(0\le f_j\le1\) such that \(f_j\to\mathbf 1_E\) in \(L^1(\nu)\). If \(E\) has finite perimeter with respect to \(g_s\), then, since \(K(x_0,t_0;\cdot,s)\) is smooth and positive, this agrees with
\begin{align*}
    \operatorname{Per}_{\nu}(E)
    =
    \int_{\partial^*E}
    K(x_0,t_0;y,s)\,d\mathcal H^{n-1}_{g_s}(y).
\end{align*}

\begin{theorem}\label{thm:intro-GI}
    For every Borel set \(E\subset M\),
    \begin{align}
            \operatorname{Per}_{\nu}(E)
            \ge
            \frac1{\sqrt{2(t_0-s)}}I(\nu(E)).
            \label{eq:GI}
    \end{align}
    Equivalently, the weighted isoperimetric profile of \((M,g_s,\nu_{x_0,t_0;s})\) is bounded below by the Gaussian profile of variance \(2(t_0-s)\).
\end{theorem}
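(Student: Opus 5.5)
We prove the functional form of \eqref{eq:GI}, Bobkov's inequality, by a monotonicity argument along the conjugate heat flow, in the spirit of Bakry--Ledoux. The functional statement is: for every smooth \(0\le f\le 1\) on \(M\),
\begin{align*}
    I\Big(\int_M f\,d\nu_{s}\Big)
    \le
    \int_M \sqrt{I(f)^2+2\tau\,|\nabla f|_{g_s}^2}\,d\nu_{s},
    \qquad \tau=t_0-s .
\end{align*}
Apply it to smooth \(f_j\to\mathbf 1_E\) in \(L^1(\nu)\). Since \(I(f_j)\ge 0\), \(I\) is continuous and \(I(0)=I(1)=0\), the term \(\int I(f_j)\,d\nu\) tends to \(0\). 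We then use
\begin{align*}
    \sqrt{a^2+b^2}\le a+b .
\end{align*}
Taking \(\liminf\) and the infimum over all such sequences gives \(I(\nu(E))\le\sqrt{2\tau}\,\operatorname{Per}_\nu(E)\), which is \eqref{eq:GI}.

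To prove the functional inequality, solve the forward heat equation \(\partial_t u=\Delta_{g_t}u\) on \([s,t_0]\) with \(u(\cdot,s)=f\). By the maximum principle \(0\le u\le1\), and \(u(x_0,t_0)=\int f\,d\nu_{x_0,t_0;s}\). For \(t\in[s,t_0]\) set
\begin{align*}
    Q(t)=\int_M \sqrt{I(u_t)^2+2(t_0-t)|\nabla u_t|_{g_t}^2}\;d\nu_{x_0,t_0;t}.
\end{align*}
Then \(Q(s)\) is the right-hand side of the functional inequality. As \(t\to t_0\), the measures \(\nu_{x_0,t_0;t}\) concentrate at \(\delta_{x_0}\) and the gradient term has weight \((t_0-t)\to0\), so \(Q(t)\to I(u(x_0,t_0))\). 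It therefore suffices to show \(dQ/dt\le 0\).

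Because \(\nu_t\) is a conjugate heat flow, \(\frac{d}{dt}\int F\,d\nu_t=\int(\partial_t-\Delta)F\,d\nu_t\). The task is thus reduced to a pointwise inequality \((\partial_t-\Delta_{g_t})\Psi\le0\) for
\begin{align*}
    \Psi=\sqrt{I(u)^2+2(t_0-t)|\nabla u|^2}.
\end{align*}
The key Ricci-flow input is that, with \(\partial_t g=-2\operatorname{Ric}\), the Bochner formula has no curvature term:
\begin{align*}
    (\partial_t-\Delta)|\nabla u|^2=-2|\nabla^2u|^2 .
\end{align*}
The Ricci contribution from \(\partial_t g^{-1}\) exactly cancels the Ricci term in Bochner. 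The flow therefore behaves like the Euclidean heat flow in the curvature-dimension sense \(CD(0,\infty)\). The remaining ingredients are the identity \(II''=-1\) and the observation that the explicit time derivative of the weight \(2(t_0-t)\) contributes \(-2|\nabla u|^2\).

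With these, the Euclidean Bakry--Ledoux computation carries over verbatim. Write \(A=I(u)\) and \(B=|\nabla u|\). One computes
\begin{align*}
    (\partial_t-\Delta)(A^2)&=2|\nabla u|^2\big(1-I'(u)^2\big),\\
    (\partial_t-\Delta)\big(2(t_0-t)|\nabla u|^2\big)&=-2|\nabla u|^2-4(t_0-t)|\nabla^2u|^2 .
\end{align*}
For a positive function \(X\), \((\partial_t-\Delta)\sqrt X=\frac{(\partial_t-\Delta)X}{2\sqrt X}+\frac{|\nabla X|^2}{4X^{3/2}}\). The condition \((\partial_t-\Delta)\Psi\le0\) then reduces, after Cauchy--Schwarz \(|\nabla|\nabla u|^2|^2\le4|\nabla u|^2|\nabla^2u|^2\), to a quadratic-form inequality in \(\nabla^2u\). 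This inequality is exactly the one verified in Bakry--Ledoux's proof of Bobkov's inequality under \(CD(0,\infty)\), with time parameter \(t_0-t\).

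I expect the main obstacle to be this algebraic completion of squares, including getting all the factors of \(2\) right and handling points where \(\Psi\) degenerates. The degeneracy occurs where \(u\in\{0,1\}\) and \(\nabla u=0\). To avoid it, I would first replace \(f\) by \(\varepsilon+(1-2\varepsilon)f\), so that \(0<u<1\) and \(\Psi>0\) everywhere, run the argument, and then let \(\varepsilon\to0\).

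A secondary technical point is justifying the limit \(Q(t)\to I(u(x_0,t_0))\). This follows from smoothness of \(u\) up to \(t_0\) together with the weak convergence \(\nu_t\to\delta_{x_0}\), both standard on a closed manifold.
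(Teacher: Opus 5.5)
Your proposal is correct and follows the paper's proof essentially step for step: the same regularization $f\mapsto\varepsilon+(1-2\varepsilon)f$, the same monotone quantity $\int_M\sqrt{I(u)^2+2(t_0-t)|\nabla u|^2}\,d\nu_{x_0,t_0;t}$ driven by the Ricci-flow Bochner cancellation and $II''=-1$, and the same relaxation to sets via $\sqrt{a^2+b^2}\le a+b$ and $\int_M I(f_j)\,d\nu\to0$. The algebraic step you defer to Bakry--Ledoux does close as you describe: after your Cauchy--Schwarz bound one gets $-\Psi^3(\partial_t-\Delta)\Psi\ge 2(t_0-t)\bigl(I^2|\nabla^2u|^2-2II'\,\nabla^2u(\nabla u,\nabla u)+I'^2|\nabla u|^4\bigr)$, with $I=I(u)$ and $I'=I'(u)$; this is nonnegative as a perfect square once $|\nabla^2u|^2|\nabla u|^4\ge\nabla^2u(\nabla u,\nabla u)^2$ is used, and the paper records the same fact as its explicit defect formula in $w=\Phi^{-1}(u)$.
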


The constant is sharp. In the static Euclidean flow, \(\nu_{x_0,t_0;s}\) is the Gaussian measure with covariance \(2(t_0-s)\operatorname{Id}_n\), and equality is attained by half spaces.

The proof is inspired by the Bakry--Ledoux proof \cite{MR1374200} of Bobkov inequality \cite{MR1428506}. For smooth \(f:M\to[0,1]\), we prove
\begin{align}
        I\left(\int_M f\,d\nu\right)
        \le
        \int_M
        \sqrt{I(f)^2+2\tau|\nabla f|_{g_s}^2}\,d\nu.
        \label{eq:intro-bobkov}
\end{align}
The fundamental computation is the Ricci-flow Bochner cancellation: if \(\Box\coloneqq \partial_t-\Delta_{g_t}\) and \(\Box u=0\), then
\begin{align}
    \Box|\nabla u|^2=-2|\nabla^2u|^2.
    \label{eq:lovely-bochner}
\end{align}
The Ricci term in the usual Bochner formula is exactly cancelled by the metric variation. This cancellation underlies several sharp estimates in the heat-kernel geometry of Ricci flow, including the Hein--Naber log-Sobolev and concentration estimates \cite{MR3245102} and Bamler's sharp heat-flow gradient estimate \cite[Theorem~4.1]{bamler2020entropy}. The point here is that the same cancellation underlies a monotonicity formula for Bobkov's functional. Indeed, if \(u(\cdot,t)\) denotes the forward evolution of \(f\) from time \(s\) to time \(t\) under the heat equation coupled to the Ricci flow, and
\begin{align*}
    Q(\cdot,t)
    =
    \sqrt{
    I(u)^2+2(t_0-t)|\nabla u|_{g_t}^2
    },
\end{align*}
then the Ricci-flow Bochner cancellation \eqref{eq:lovely-bochner}, together with the time-dependent coefficient \(2(t_0-t)\), gives \(\Box Q\le0\). Hence
\begin{align*}
    t\longmapsto \int_M Q(\cdot,t)\,d\nu_{x_0,t_0;t}
\end{align*}
is nonincreasing. Evaluating this monotonicity at \(t=t_0\) and \(t=s\) gives \eqref{eq:intro-bobkov}, and Theorem~\ref{thm:intro-GI} follows by approximating characteristic functions.

The first consequence is the exact Gaussian enlargement estimate. If \(E_0=E\) and, for \(r>0\), \(E_r=\{y:d_{g_s}(y,E)<r\},\) then
\begin{align*}
    \nu(E_r)
    \ge
    \Phi\left(\Phi^{-1}(\nu(E))+\frac{r}{\sqrt{2\tau}}\right)
    \qquad
    \text{for all }r\ge0.
\end{align*}
Consequently, for Borel sets \(A,B\subset M\) with \(d_{g_s}(A,B)>0\),
\begin{align*}
    \Phi^{-1}(\nu(A))+\Phi^{-1}(\nu(B))
    \le
    -\frac{d_{g_s}(A,B)}{\sqrt{2\tau}}.
\end{align*}
In particular,
\begin{align*}
    \nu(A)\nu(B)
    \le
    \Phi\left(-\frac{d_{g_s}(A,B)}{2\sqrt{2\tau}}\right)^2
    \le
    \exp\left(-\frac{d_{g_s}(A,B)^2}{8\tau}\right),
\end{align*}
which recovers the exponential two-set concentration estimate of Hein--Naber \cite[Theorem~1.13]{MR3245102}. Thus the Gaussian isoperimetric profile gives a quantile refinement of Hein--Naber concentration estimate. In Section~\ref{sec:poincare}, we also show that the same profile implies a Gaussian Pólya--Szegő rearrangement inequality and hence gives another proof of the sharp Hein--Naber log-Sobolev inequality \cite[Theorem~1.10]{MR3245102}. In this sense, Theorem~\ref{thm:intro-GI} provides a common isoperimetric source for the sharp concentration, log-Sobolev, and Poincaré estimates considered below.

We then record several further consequences in Ricci-flow heat-kernel geometry. Around Bamler's \(H_n\)-centers, the one-sided concentration estimate gives Gaussian-profile tails. The logarithmic derivative of the conjugate heat kernel satisfies a convex-order domination by the one-dimensional Gaussian model, giving exact constants for directional logarithmic derivative moments. The same profile also gives support-sensitive Cheeger and Faber--Krahn estimates, Gaussian-model \(L^p\)-Poincaré inequalities, and the reverse hypercontractive counterpart to Bamler's \(p>1\) hypercontractivity theorem \cite[Theorem~12.1]{bamler2020entropy}.

Finally, we include two extensions of the main argument. First, in the entropy-regular regime, Bamler's \(\varepsilon\)-regularity theorem allows one to compare the heat-kernel weighted space to Euclidean Gaussian space. This gives an almost sharp upper bound for the weighted isoperimetric profile and a half-space stability statement for almost extremizers. Second, the finite-dimensional Bobkov inequality tensorizes along parabolic path space. Using the Haslhofer--Naber path-space gradient estimate \cite{MR3790068}, this yields a path-space Bobkov inequality and the corresponding Gaussian isoperimetric inequality on path space.

The paper is organized as follows. In Section~\ref{sec:bobkov}, we prove Bobkov inequality, deduce Theorem~\ref{thm:intro-GI}, and discuss the rigidity mechanism behind equality. In Section~\ref{sec:concentration}, we derive the exact enlargement theorem and the sharp two-set concentration inequality. In Section~\ref{sec:Hn-localization}, we apply the one-sided concentration estimate to Bamler's \(H_n\)-centers. In Section~\ref{sec:heat-kernel-estimates}, we prove convex-order and moment estimates for logarithmic derivatives of the conjugate heat kernel. In Section~\ref{sec:profile-regularity}, we record the entropy-regular profile and half-space stability consequences. In Section~\ref{sec:poincare}, we prove the Gaussian rearrangement inequality and recover the sharp log-Sobolev, Poincaré, Cheeger, and Faber--Krahn estimates. In Section~\ref{sec:reverse-hypercontractivity}, we prove reverse hypercontractivity. Finally, in Section~\ref{sec:path-space-bobkov}, we prove the path-space Bobkov inequality.

\subsection*{Acknowledgments}

I am grateful to Max Hallgren for mentioning that he has independently obtained a version of the finite-dimensional Gaussian isoperimetric inequality along a Ricci flow. I am thankful to Bennett Chow for constant support and encouragement, especially while I was a Ricci bud exploring the heat-kernel geometry of a Ricci flow-er. I am indebted to the late Dan Stroock for introducing me to many ideas in the theory of Gaussian measures.

\section{Bobkov inequality and Gaussian isoperimetry}
\label{sec:bobkov}

We prove the functional inequality behind Theorem~\ref{thm:intro-GI}. Our argument is inspired by the Bakry--Ledoux proof \cite{MR1374200} of Bobkov inequality, with one Ricci-flow feature: the Bochner Ricci term is exactly cancelled by the variation of the metric \eqref{eq:lovely-bochner}. We write
\begin{align*}
    P_{s,t_0}f(x_0)=\int_M f\,d\nu
\end{align*}
for the forward heat evolution from time \(s\) to time \(t_0\), evaluated at \(x_0\).

\begin{theorem}[Bobkov inequality]
\label{thm:Bobkov}
    For every smooth function \(f:M\to[0,1]\),
    \begin{align}
        I\left(\int_M f\,d\nu\right)
        \le
        \int_M
        \sqrt{I(f)^2+2\tau|\nabla f|_{g_s}^2}\,d\nu.
        \label{eq:Bobkov}
    \end{align}
    The constant is sharp in the static Euclidean flow.
\end{theorem}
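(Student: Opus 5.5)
We will run the Bakry--Ledoux semigroup argument along the Ricci flow, following the outline in the introduction. Let \(u(\cdot,t)\), \(t\in[s,t_0]\), solve \(\Box u=0\) with \(u(\cdot,s)=f\). By the maximum principle \(0\le u\le 1\), and \(u(x_0,t_0)=\int_M f\,d\nu\). Set
\begin{align*}
    Q=\sqrt{I(u)^2+2(t_0-t)|\nabla u|_{g_t}^2}.
\end{align*}
The claim is that \(\Box Q\le 0\). Granting this, the conjugate heat kernel gives
\begin{align*}
    \frac{d}{dt}\int_M Q\,d\nu_{x_0,t_0;t}=\int_M \Box Q\,d\nu_{x_0,t_0;t}\le 0 .
\end{align*}
At \(t=t_0\) the measure is \(\delta_{x_0}\), so the integral equals \(Q(x_0,t_0)=I(u(x_0,t_0))=I(\int f\,d\nu)\). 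At \(t=s\) it equals \(\int_M\sqrt{I(f)^2+2\tau|\nabla f|^2}\,d\nu\). Monotonicity then yields \eqref{eq:Bobkov}. The endpoint \(t\to t_0\) is handled by continuity of \(u\) and weak convergence \(\nu_{x_0,t_0;t}\to\delta_{x_0}\).

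The pointwise computation uses three facts. First, \(\Box u=0\). Second, the Ricci-flow Bochner identity \eqref{eq:lovely-bochner}. Third, \(II''=-1\). Write \(\sigma=2(t_0-t)\), \(J=I(u)\), \(w=|\nabla u|^2\) and \(R=J^2+\sigma w\). Then \(\Box(J^2)=2JJ''\cdot(-w)\cdot(-1)\) collects as \(\Box J=-J''w\), which gives \(\Box(J^2)=2J\Box J-2|\nabla J|^2=2w-2J'^2w\). Also
\begin{align*}
    \Box(\sigma w)=-2w-2\sigma|\nabla^2u|^2 .
\end{align*}
The \(\pm2w\) terms cancel, which is exactly why the coefficient is \(2(t_0-t)\). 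Hence
\begin{align*}
    \Box R=-2J'^2w-2\sigma|\nabla^2u|^2 .
\end{align*}
Next, \(\Box\sqrt R=\frac{\Box R}{2\sqrt R}+\frac{|\nabla R|^2}{4R^{3/2}}\), so it suffices to prove
\begin{align*}
    |\nabla R|^2\le 4R\left(J'^2w+\sigma|\nabla^2u|^2\right).
\end{align*}
We have \(\nabla R=2JJ'\nabla u+\sigma\nabla w\) and \(|\nabla w|\le 2|\nabla u||\nabla^2 u|\), so \(|\nabla R|\le 2|\nabla u|\bigl(J|J'|+\sigma|\nabla^2u|\bigr)\). By Cauchy--Schwarz applied to the vectors \((J,\sqrt\sigma|\nabla u|)\) and \((|J'||\nabla u|,\sqrt\sigma|\nabla^2u|)\),
\begin{align*}
    |\nabla u|\bigl(J|J'|+\sigma|\nabla^2u|\bigr)\le |\nabla u|\bigl(J|J'|+\sqrt\sigma\,|\nabla u|^{0}\cdots\bigr)\le \sqrt R\sqrt{J'^2w+\sigma|\nabla^2u|^2}.
\end{align*}
Squaring gives the required bound, so \(\Box Q\le0\).

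The main obstacle is regularity. \(I\) vanishes at \(0\) and \(1\), and \(I'\) blows up there, so \(Q\) may fail to be smooth where \(u\in\{0,1\}\) and \(\nabla u=0\). To avoid this, we first prove the inequality for \(f\) with values in \([\varepsilon,1-\varepsilon]\). By the strong maximum principle \(u\) then stays in that interval, so \(J\ge c_\varepsilon>0\), \(R>0\), and everything is smooth. For general \(f\) we apply this to \(\varepsilon+(1-2\varepsilon)f\) and let \(\varepsilon\to0\), using continuity of \(I\) and dominated convergence.

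Sharpness is immediate from the static Euclidean case. There \(\nu=\gamma_\tau\), and taking \(f\) to be smooth approximations of half-space indicators recovers equality in \eqref{eq:euclidean-GI}. Alternatively, \(f=\Phi(\langle x,e\rangle/\sqrt{2\tau})\)-type functions composed with the Gaussian make the Cauchy--Schwarz step an equality.
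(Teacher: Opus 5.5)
Your proposal is correct and follows the paper's argument: the same $\varepsilon$-regularization, the same functional $Q$, the same identity $\Box Q^2=-2I'(u)^2|\nabla u|^2-2a|\nabla^2u|^2$ from the Ricci-flow Bochner cancellation, and the same monotonicity of $\int_M Q\,d\nu_{x_0,t_0;t}$. The only difference is the final algebraic step. You bound $|\nabla Q^2|^2$ by the triangle inequality and Cauchy--Schwarz in $\mathbb R^2$; your middle displayed expression is garbled, but the vectors you name give the needed inequality directly. The paper instead substitutes $w=\Phi^{-1}(u)$ to get the exact defect identity \eqref{eq:Bobkov-pointwise-defect}, which it needs later for the equality case in Corollary~\ref{cor:Bobkov-equality}.
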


\begin{proof}
    It suffices first to assume that \(\varepsilon\le f\le1-\varepsilon\). The general case follows by applying the result to \(f_\varepsilon=(1-2\varepsilon)f+\varepsilon\) and letting \(\varepsilon\downarrow0\).
    
    Let \(u(\cdot,t)=P_{s,t}f\), so that \(\Box u=0.\) Put \(a(t)=2(t_0-t),\)
    \begin{align*}
        Q=\sqrt{I(u)^2+a(t)|\nabla u|_{g_t}^2}.
    \end{align*}
    We prove that \(\Box Q\le0\).
    
    The Gaussian profile satisfies \(I'=-\Phi^{-1},\) \(I I''=-1,\) and hence \((I^2)''=2(I')^2-2.\) Since \(\Box u=0\), this gives
    \begin{align*}
        \Box I(u)^2
        =
        2(1-I'(u)^2)|\nabla u|^2.
    \end{align*}
    On the other hand, by the Ricci-flow Bochner cancellation \eqref{eq:lovely-bochner} and \(a'=-2\),
    \begin{align*}
        \Box\bigl(a|\nabla u|^2\bigr)
        =
        -2|\nabla u|^2-2a|\nabla^2u|^2.
    \end{align*}
    Therefore
    \begin{align}
        \Box Q^2
        =
        -2I'(u)^2|\nabla u|^2
        -
        2a|\nabla^2u|^2.
        \label{eq:BoxQsquare}
    \end{align}
    Also
    \begin{align*}
        \nabla Q^2
        =
        2I(u)I'(u)\nabla u
        +
        2a\nabla^2u(\nabla u,\cdot).
    \end{align*}
    Using
    \begin{align*}
        \Box Q
        =
        \frac{1}{2Q}\Box Q^2
        +
        \frac{1}{4Q^3}|\nabla Q^2|^2,
    \end{align*}
    and writing \(w=\Phi^{-1}(u),\) \(L=\sqrt{1+a|\nabla w|^2},\) \(Q=I(u)L,\) the preceding identities simplify to
    \begin{align}
            -\Box Q
            =
            \frac{aI(u)}{L}
            \left(
            |\nabla^2w|^2
            -
            \frac{a|\nabla^2w(\nabla w,\cdot)|^2}{L^2}
            \right).
            \label{eq:Bobkov-pointwise-defect}
    \end{align}
    The right-hand side is nonnegative since \(|\nabla^2w(\nabla w,\cdot)|^2 \le \nabla^2w|^2|\nabla w|^2\) and \(L^2=1+a|\nabla w|^2\). Hence \(\Box Q\le0\).
    
    Let \(\nu_t=\nu_{x_0,t_0;t}.\) For every smooth time-dependent function \(h\), \(\frac{d}{dt}\int_M h\,d\nu_t = \int_M \Box h\,d\nu_t.\) Thus \(t\longmapsto \int_M Q(\cdot,t)\,d\nu_t\) is nonincreasing. At \(t=t_0\), we have \(a(t_0)=0\) and \(\nu_{t_0}=\delta_{x_0}\), so
    \begin{align*}
        \int_M Q(\cdot,t_0)\,d\nu_{t_0}
        =
        I(P_{s,t_0}f(x_0)).
    \end{align*}
    At \(t=s\),
    \begin{align*}
        Q(\cdot,s)
        =
        \sqrt{I(f)^2+2\tau|\nabla f|_{g_s}^2}.
    \end{align*}
    The monotonicity gives \eqref{eq:Bobkov}.
\end{proof}

\begin{corollary}[Equality on a closed manifold]
\label{cor:Bobkov-equality}
    Suppose \(M\) is closed and connected. If \(0<f<1\) is smooth and equality holds in Theorem~\ref{thm:Bobkov}, then \(f\) is constant.
\end{corollary}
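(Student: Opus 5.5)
Equality means the monotone quantity \(\int_M Q(\cdot,t)\,d\nu_t\) takes the same value at \(t=s\) and at \(t=t_0\). Since its derivative is \(\int_M \Box Q\,d\nu_t\le 0\), we get \(\int_M(-\Box Q)\,d\nu_t=0\) for a.e. \(t\in(s,t_0)\), and by continuity for all \(t\in(s,t_0)\). On a closed manifold the conjugate heat kernel \(K(x_0,t_0;\cdot,t)\) is smooth and strictly positive for \(t<t_0\). Together with \(-\Box Q\ge0\) from \eqref{eq:Bobkov-pointwise-defect}, this forces \(\Box Q\equiv0\) on \(M\times(s,t_0)\). I will use this rigidity first at times close to \(s\). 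Since \(0<f<1\) and \(M\) is compact, \(f\) takes values in some \([\varepsilon,1-\varepsilon]\), so no regularization is needed. By the maximum principle \(u\) stays in \([\varepsilon,1-\varepsilon]\), and \(w=\Phi^{-1}(u)\), \(I(u)\), \(L\) are smooth.

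Next I read off the pointwise consequence of \eqref{eq:Bobkov-pointwise-defect}. For \(t<t_0\) we have \(a>0\) and \(I(u)>0\), so the vanishing gives
\[
|\nabla^2w|^2L^2=a|\nabla^2w(\nabla w,\cdot)|^2 .
\]
Using \(L^2=1+a|\nabla w|^2\) and Cauchy--Schwarz \(|\nabla^2w(\nabla w,\cdot)|^2\le|\nabla^2w|^2|\nabla w|^2\), this becomes \(|\nabla^2w|^2(1+a|\nabla w|^2)\le a|\nabla^2w|^2|\nabla w|^2\). Hence \(|\nabla^2w|^2\le0\), that is, \(\nabla^2w\equiv0\) on \(M\times(s,t_0)\).

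A function on a closed connected Riemannian manifold with vanishing Hessian has parallel gradient, so \(|\nabla w|\) is constant. Also \(\Delta w=0\), and a harmonic function on a closed manifold is constant by the maximum principle (or by integrating \(\Delta(w^2)=2|\nabla w|^2\)). Thus \(w(\cdot,t)\) is constant for every \(t\in(s,t_0)\), and so is \(u(\cdot,t)=\Phi(w)\). Letting \(t\downarrow s\), smoothness of \(u\) up to \(t=s\) gives \(u(\cdot,t)\to f\), so \(f\) is constant.

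The main obstacle is the step from ``equality in the integrated inequality'' to ``\(\Box Q\equiv0\) everywhere.'' It needs three things: the identity \(\frac{d}{dt}\int Q\,d\nu_t=\int\Box Q\,d\nu_t\) with \(Q\) smooth on \((s,t_0)\); continuity of \(\int Q\,d\nu_t\) at the endpoints, which the proof of Theorem~\ref{thm:Bobkov} already uses; and strict positivity of the conjugate heat kernel on a connected closed manifold. Smoothness of \(Q\) is automatic, because \(I(u)>0\) keeps the square root away from zero. With these in hand, equality of the endpoint values forces the nonnegative integrand to vanish identically, and the rest is the elementary Hessian argument above.
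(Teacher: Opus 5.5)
Your proposal is correct and follows the paper's proof essentially step for step. Equality of the endpoint values forces the space-time integral of the nonnegative defect \eqref{eq:Bobkov-pointwise-defect} to vanish, positivity of the conjugate heat kernel makes the defect vanish pointwise, Cauchy--Schwarz then gives \(\nabla^2 w_r=0\), and constancy on a closed connected manifold followed by \(r\downarrow s\) shows \(f\) is constant. The only differences are minor: you explain why \(0<f<1\) on compact \(M\) avoids the \(\varepsilon\)-regularization, and you conclude via \(\Delta w=0\) rather than via a parallel gradient vanishing at an extremum.
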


\begin{proof}
    Equality in Theorem~\ref{thm:Bobkov} implies that
    \begin{align*}
        \int_s^{t_0}\int_M -\Box Q\,d\nu_r\,dr=0.
    \end{align*}
    By the pointwise formula \eqref{eq:Bobkov-pointwise-defect}, the integrand is nonnegative. Since the conjugate heat-kernel density is positive for \(r<t_0\), the defect vanishes identically for \(r\in(s,t_0)\). In particular, with \(w_r=\Phi^{-1}(P_{s,r}f)\), we have
    \begin{align*}
        \nabla^2_{g_r}w_r=0
        \qquad
        \text{for every }r\in(s,t_0).
    \end{align*}
    Indeed, the term in parentheses in \eqref{eq:Bobkov-pointwise-defect} satisfies the stronger lower bound
    \begin{align*}
        |\nabla^2w|^2
        -
        \frac{a|\nabla^2w(\nabla w,\cdot)|^2}{L^2}
        \ge
        \frac{|\nabla^2w|^2}{L^2}.
    \end{align*}
    Thus vanishing of the defect forces \(\nabla^2w_r=0\).
    
    On a closed connected manifold, a function with zero Hessian has parallel gradient. Since the function attains its maximum and minimum, this parallel gradient must vanish. Hence \(w_r\), and therefore \(P_{s,r}f\), is spatially constant for every \(r\in(s,t_0)\). Letting \(r\downarrow s\) gives that \(f\) is constant.
\end{proof}

\begin{remark}
    In a complete bounded-geometry setting, provided the preceding monotonicity and integration by parts are justified, the same argument shows that a nonconstant equality case forces
    \begin{align*}
        \nabla^2_{g_r}\Phi^{-1}(P_{s,r}f)=0
        \qquad
        \text{for }r\in(s,t_0).
    \end{align*}
    Thus the gradient is parallel. If it is nonzero, the de Rham splitting theorem gives a static Euclidean line factor. Along this factor, \(\Phi^{-1}(P_{s,r}f)\) is affine, and passing to the initial time gives \(f(y,z)=\Phi(\alpha z+\beta)\) on the line factor. We shall not use this noncompact rigidity statement below.
\end{remark}

\begin{remark}
    The same proof, with \(a(t)\) replaced by \(\lambda+2(t_0-t)\), gives for every \(\lambda\ge0\)
    \begin{align}
        \sqrt{
        I(P_{s,t_0}f(x_0))^2
        +\lambda|\nabla P_{s,t_0}f|_{g_{t_0}}^2(x_0)
        }
        \le
        P_{s,t_0}\left[
        \sqrt{
        I(f)^2+
        \bigl(2(t_0-s)+\lambda\bigr)|\nabla f|_{g_s}^2
        }
        \right](x_0).
    \end{align}
    Letting \(\lambda\to\infty\), after an affine rescaling of a bounded smooth function, recovers the gradient contraction
    \begin{align}
        |\nabla P_{s,t_0}f|_{g_{t_0}}(x_0)
        \le
        P_{s,t_0}(|\nabla f|_{g_s})(x_0).
        \label{eq:gradient-contraction}
    \end{align}
\end{remark}

We now pass from the functional inequality to sets.

\begin{proof}[Proof of Theorem~\ref{thm:intro-GI}]
    If \(\operatorname{Per}_\nu(E)=+\infty\), there is nothing to prove. Otherwise, by the relaxation definition of the weighted perimeter, choose smooth functions \(0\le f_j\le1\) such that \(f_j\to\mathbf 1_E\) in \(L^1(\nu)\) and
    \begin{align*}
        \liminf_{j\to\infty}\int_M|\nabla f_j|\,d\nu
        \le
        \operatorname{Per}_\nu(E)+\varepsilon.
    \end{align*}
    Applying Theorem~\ref{thm:Bobkov} to \(f_j\), and using \(\sqrt{a^2+b^2}\le a+b\), gives
    \begin{align*}
        I\left(\int_M f_j\,d\nu\right)
        \le
        \int_M I(f_j)\,d\nu
        +
        \sqrt{2\tau}\int_M|\nabla f_j|\,d\nu.
    \end{align*}
    Since \(I(0)=I(1)=0\), \(I\) is bounded on \([0,1]\), and \(f_j\to\mathbf 1_E\) in measure, we have \(\int_M I(f_j)\,d\nu\to0.\) Also \(\int_M f_j\,d\nu\to \nu(E).\) Letting \(j\to\infty\), we get
    \begin{align*}
        I(\nu(E))
        \le
        \sqrt{2\tau}\bigl(\operatorname{Per}_\nu(E)+\varepsilon\bigr).
    \end{align*}
    Finally let \(\varepsilon\downarrow0\). This proves
    \begin{align*}
        \operatorname{Per}_{\nu}(E)
        \ge
        \frac1{\sqrt{2\tau}}I(\nu(E)).
    \end{align*}
    Taking the infimum over all sets of fixed \(\nu\)-measure gives the profile form.
\end{proof}

The isoperimetric inequality is equivalently an exact enlargement estimate.

\begin{corollary}[Exact Gaussian enlargement]
\label{cor:enlargement}
    For every Borel set \(E\subset M\) and every \(r\ge0\),
    \begin{align}
        \nu(E_r)
        \ge
        \Phi\left(
        \Phi^{-1}(\nu(E))+\frac{r}{\sqrt{2\tau}}
        \right),
        \label{eq:exact-enlargement}
    \end{align}
    where \(E_0=E\) and, for \(r>0\), \(E_r=\{y:d_{g_s}(y,E)<r\}.\)
\end{corollary}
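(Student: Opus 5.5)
The plan is to integrate the isoperimetric inequality of Theorem~\ref{thm:intro-GI} along the distance function to \(E\), using the coarea formula. I will show that \(r\mapsto \Phi^{-1}(\nu(E_r))-r/\sqrt{2\tau}\) is nondecreasing.

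First I dispose of trivial cases. If \(\nu(E)=0\), the right-hand side of \eqref{eq:exact-enlargement} is \(\Phi(-\infty)=0\), so there is nothing to prove. If \(\nu(E_r)=1\), the inequality is also trivial. Since \(E_r\) increases with \(r\), I may assume \(\nu(E)>0\) and work on the interval of \(r\) where \(\nu(E_r)<1\). I may also replace \(E\) by its closure when convenient, because \(E_r\) depends only on \(\overline E\) for \(r>0\). At \(r=0\) the inequality is an equality.

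Let \(\rho(y)=d_{g_s}(y,E)\). This function is \(1\)-Lipschitz with \(|\nabla\rho|=1\) a.e.\ off \(\overline E\), and \(E_r=\{\rho<r\}\). Set \(m(r)=\nu(E_r)\), which is nondecreasing. By the weighted coarea formula, with \(K\) smooth and positive,
\begin{align*}
    m(r_2)-m(r_1)\ \ge\ \int_{r_1}^{r_2}\operatorname{Per}_\nu(\{\rho<r\})\,dr .
\end{align*}
This holds because \(\nu(\{r_1\le\rho<r_2\})\ge\int_{\{r_1\le \rho<r_2\}}|\nabla\rho|\,d\nu\), and by coarea the latter equals \(\int_{r_1}^{r_2}\operatorname{Per}_\nu(\{\rho<r\})\,dr\). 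Here a.e.\ sublevel set has finite perimeter, and the perimeter is computed by the relaxation definition, which agrees with the Hausdorff formula. Applying Theorem~\ref{thm:intro-GI} to each \(E_r\) gives
\begin{align*}
    m(r_2)-m(r_1)\ \ge\ \frac{1}{\sqrt{2\tau}}\int_{r_1}^{r_2} I(m(r))\,dr .
\end{align*}
Thus \(m\) is a monotone function dominating, in the integral sense, solutions of the ODE \(y'=I(y)/\sqrt{2\tau}\).

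Next, the solution of \(y'=I(y)/\sqrt{2\tau}\) with \(y(0)=\nu(E)\) is exactly \(y(r)=\Phi(\Phi^{-1}(\nu(E))+r/\sqrt{2\tau})\), since \((\Phi^{-1})'(y)=1/I(y)\). It remains to run a comparison argument, and this is the main obstacle: \(m\) is only monotone, possibly with jumps and no pointwise derivative. I will set \(G=\Phi^{-1}\circ m\). Since \(\Phi^{-1}\) is increasing and locally Lipschitz on \((0,1)\), \(G\) is nondecreasing. The integral inequality shows that at every point its lower Dini derivative satisfies \(D_-G\ge 1/\sqrt{2\tau}\). To see this, combine \(m(r_2)-m(r_1)\ge (r_2-r_1)\min_{[r_1,r_2]}I(m)/\sqrt{2\tau}\) with the monotonicity of \(m\) and the continuity of \(I\); the extra jumps only help. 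A nondecreasing function whose Dini derivative is everywhere at least \(c\) satisfies \(G(r)-G(0)\ge cr\), by the standard monotone-function lemma applied to \(G(r)-cr+\delta r\). Hence \(G(r)\ge\Phi^{-1}(\nu(E))+r/\sqrt{2\tau}\), which is \eqref{eq:exact-enlargement}.

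Two technical points need care at the lower endpoint. If \(\nu(\overline E)>\nu(E)\), the bound only improves. Right-continuity at \(0\) is not needed, because \(m(r)\ge m(0)\) and the Dini argument can start from \(m(0^+)\ge\nu(E)\).
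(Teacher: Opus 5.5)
Your proposal is correct and follows essentially the same route as the paper. Both arguments bound the growth of \(\nu(E_r)\) below by \(\operatorname{Per}_\nu(E_r)\) (you via the coarea formula for \(d_{g_s}(\cdot,E)\), the paper via the Minkowski-content bound \(v'(r)\ge\operatorname{Per}_\nu(E_r)\)), then apply Theorem~\ref{thm:intro-GI}, pass to \(\Phi^{-1}\circ\nu(E_r)\) using \((\Phi^{-1})'=1/I\), and integrate from a small positive radius before letting it tend to \(0\). Your integrated inequality combined with the Dini-derivative lemma for the nondecreasing function \(\Phi^{-1}\circ m\) makes explicit a step the paper leaves implicit, namely that a monotone function satisfies \(f(b)-f(a)\ge\int_a^b f'\).
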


\begin{proof}
    The endpoint cases \(\nu(E)=0\) and \(\nu(E)=1\) are immediate, with the usual extended interpretation of \(\Phi^{-1}\). Assume \(0<\nu(E)<1\).
    
    For \(r>0\), set \(v(r)=\nu(E_r).\) The function \(v\) is monotone. By the standard Minkowski-content form of perimeter for tubular neighborhoods, for a.e. \(r>0\), \(v'(r)\ge \operatorname{Per}_\nu(E_r).\) Applying Theorem~\ref{thm:intro-GI} to \(E_r\) gives, for a.e. \(r>0\), \(v'(r)\ge\frac1{\sqrt{2\tau}}I(v(r)).\) On the set where \(0<v(r)<1\), we may compose with \(\Phi^{-1}\), and since \((\Phi^{-1})'=\frac1I,\) we obtain
    \begin{align*}
        \frac{d}{dr}\Phi^{-1}(v(r))
        \ge
        \frac1{\sqrt{2\tau}}
    \end{align*}
    for a.e. \(r\). Integrating from \(\rho\) to \(r\), with \(0<\rho<r\), gives
    \begin{align*}
        \Phi^{-1}(v(r))
        \ge
        \Phi^{-1}(v(\rho))
        +
        \frac{r-\rho}{\sqrt{2\tau}}.
    \end{align*}
    Letting \(\rho\downarrow0\), and using \(\liminf_{\rho\downarrow0}v(\rho)\ge \nu(E),\) we get
    \begin{align*}
        \Phi^{-1}(v(r))
        \ge
        \Phi^{-1}(\nu(E))
        +
        \frac{r}{\sqrt{2\tau}}.
    \end{align*}
    Applying \(\Phi\) gives \eqref{eq:exact-enlargement}. The case \(r=0\) is the definition \(E_0=E\).
\end{proof}

\section{Concentration phenomena}
\label{sec:concentration}

We write \(d_s\) for the distance induced by \(g_s\), and \(d_s(A,B)=\inf\{d_s(a,b):a\in A,\ b\in B\}.\) The exact enlargement estimate gives a Gaussian-quantile version of the Hein--Naber two-set concentration theorem \cite[Theorem~1.13]{MR3245102}. The exponential concentration estimate is then recovered by applying the standard Gaussian tail bound.

\begin{theorem}[Two-set concentration]
\label{thm:two-set-concentration}
    Let \(A,B\subset M\) be Borel sets with \(d_s(A,B)>0\). Then
    \begin{align}
        \Phi^{-1}(\nu(A))+\Phi^{-1}(\nu(B))
        \le
        -\frac{d_s(A,B)}{\sqrt{2\tau}}.
        \label{eq:two-set-quantile}
    \end{align}
    Consequently,
    \begin{align}
        \nu(A)\nu(B)
        \le
        \Phi\left(-\frac{d_s(A,B)}{2\sqrt{2\tau}}\right)^2
        \le
        \exp\left(-\frac{d_s(A,B)^2}{8\tau}\right).
        \label{eq:two-set-product}
    \end{align}
\end{theorem}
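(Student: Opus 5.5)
The plan is to deduce \eqref{eq:two-set-quantile} from Corollary~\ref{cor:enlargement} applied to \(E=A\), and then obtain \eqref{eq:two-set-product} by elementary properties of \(\Phi\).

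\textbf{Reduction and enlargement.} Put \(d=d_s(A,B)>0\). If \(\nu(A)=0\) or \(\nu(B)=0\), the left side of \eqref{eq:two-set-quantile} is \(-\infty\) and there is nothing to prove. So assume both measures are positive; then both are also less than \(1\). Take any \(r<d\). The neighborhood \(A_r=\{y:d_s(y,A)<r\}\) is disjoint from \(B\), so \(\nu(A_r)\le 1-\nu(B)\). By Corollary~\ref{cor:enlargement},
\begin{align*}
    \Phi\left(\Phi^{-1}(\nu(A))+\frac{r}{\sqrt{2\tau}}\right)\le 1-\nu(B)=\Phi\bigl(-\Phi^{-1}(\nu(B))\bigr),
\end{align*}
using the symmetry \(1-\Phi(x)=\Phi(-x)\). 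Since \(\Phi\) is strictly increasing, this gives
\begin{align*}
    \Phi^{-1}(\nu(A))+\Phi^{-1}(\nu(B))\le -\frac{r}{\sqrt{2\tau}}.
\end{align*}
Letting \(r\uparrow d\) yields \eqref{eq:two-set-quantile}.

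\textbf{Product bound.} Write \(\alpha=\Phi^{-1}(\nu(A))\) and \(\beta=\Phi^{-1}(\nu(B))\), so \(\alpha+\beta\le -c\) with \(c=d/\sqrt{2\tau}\). I need \(\Phi(\alpha)\Phi(\beta)\le\Phi(-c/2)^2\). The cleanest route is log-concavity of \(\Phi\), which holds because \(\varphi/\Phi\) is decreasing. It gives
\begin{align*}
    \log\Phi(\alpha)+\log\Phi(\beta)\le 2\log\Phi\left(\frac{\alpha+\beta}{2}\right)\le 2\log\Phi(-c/2),
\end{align*}
where the last step uses monotonicity of \(\Phi\). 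Finally, the standard Gaussian tail bound \(\Phi(-x)\le e^{-x^2/2}\) for \(x\ge0\) gives
\begin{align*}
    \Phi(-c/2)^2\le e^{-c^2/4}=\exp\left(-\frac{d^2}{8\tau}\right).
\end{align*}
The sharper bound \(\Phi(-x)\le\tfrac12e^{-x^2/2}\) also holds, but it is not needed here.

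\textbf{Main obstacle.} There is no substantial obstacle. The points needing care are the degenerate measures handled above and the strict inequality \(r<d\), which is needed for \(A_r\cap B=\emptyset\). The log-concavity of \(\Phi\) should be cited or verified briefly, for instance via Prékopa–Leindler, or by checking that \((\varphi/\Phi)'<0\) using the Mills-ratio inequality.
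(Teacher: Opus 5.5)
Your proof is correct and is essentially the paper's argument. The paper applies Corollary~\ref{cor:enlargement} to \(B\) rather than \(A\), lets \(r\uparrow d\), and bounds the product using log-concavity of \(\Phi\) through the concave, symmetric function \(x\mapsto\log\Phi(x)+\log\Phi(-D-x)\), which is equivalent to your midpoint-concavity step; your handling of the degenerate cases (disjoint sets of positive measure automatically have measure below \(1\)) is slightly more explicit than the paper's appeal to an extended interpretation of \(\Phi^{-1}\).
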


\begin{proof}
    Put \(d=d_s(A,B)\). Applying Corollary~\ref{cor:enlargement} to \(B\), we obtain, for every \(0<r<d\),
    \begin{align*}
        \nu(B_r)
        \ge
        \Phi\left(
        \Phi^{-1}(\nu(B))+\frac{r}{\sqrt{2\tau}}
        \right).
    \end{align*}
    Since \(A\cap B_r=\emptyset\),
    \begin{align*}
        \nu(A)
        \le
        1-\Phi\left(
        \Phi^{-1}(\nu(B))+\frac{r}{\sqrt{2\tau}}
        \right).
    \end{align*}
    Letting \(r\uparrow d\) gives \eqref{eq:two-set-quantile}. The endpoint cases \(\nu(A),\nu(B)\in\{0,1\}\) follow by the same approximation, with the usual extended interpretation of \(\Phi^{-1}\). 
    
    Set \(D=\frac{d}{\sqrt{2\tau}},\) \(x=\Phi^{-1}(\nu(A)),\) \(y=\Phi^{-1}(\nu(B)).\) Under the constraint \(x+y\le -D\), the product \(\Phi(x)\Phi(y)\) is maximized when \(x=y=-D/2\). Indeed, it is enough to consider \(y=-D-x\), and the function \(h(x)=\log\Phi(x)+\log\Phi(-D-x)\) is concave and symmetric about \(-D/2\), since \(\Phi\) is log-concave. Hence \(h\) attains its maximum at \(x=-D/2\). This proves the first inequality in \eqref{eq:two-set-product}. The second follows from \(\Phi(-u)\le e^{-u^2/2}\) for \(u\ge0\).
\end{proof}

The same argument gives a one-sided form. This is often the more useful version: a set separated from a set of definite heat-kernel mass has the corresponding Gaussian tail.

\begin{corollary}[One-sided concentration]
\label{cor:one-sided-concentration}
    Let \(A,B\subset M\) be Borel sets and suppose \(\nu(B)\ge\beta>0\). If \(d_s(A,B)>0\), then
    \begin{align}
        \nu(A)
        \le
        1-\Phi\left(
        \Phi^{-1}(\beta)+\frac{d_s(A,B)}{\sqrt{2\tau}}
        \right).
        \label{eq:one-sided-concentration}
    \end{align}
\end{corollary}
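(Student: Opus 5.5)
The plan is to repeat the first half of the proof of Theorem~\ref{thm:two-set-concentration}, replacing \(\nu(B)\) by the lower bound \(\beta\) and using that \(\Phi\) and \(\Phi^{-1}\) are increasing. Put \(d=d_s(A,B)>0\). If \(\nu(B)=1\), then \(\nu(A)=0\) because \(A\cap B=\emptyset\), and the inequality holds trivially since its right-hand side is nonnegative. So assume \(\beta\le\nu(B)<1\).

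Fix \(0<r<d\). Corollary~\ref{cor:enlargement} applied to \(B\) gives
\begin{align*}
    \nu(B_r)\ge\Phi\left(\Phi^{-1}(\nu(B))+\frac{r}{\sqrt{2\tau}}\right)\ge\Phi\left(\Phi^{-1}(\beta)+\frac{r}{\sqrt{2\tau}}\right),
\end{align*}
where the second inequality uses \(\nu(B)\ge\beta\) and the monotonicity of \(\Phi^{-1}\) and \(\Phi\). Every point of \(A\) is at \(g_s\)-distance at least \(d>r\) from \(B\), so \(A\cap B_r=\emptyset\). Hence
\begin{align*}
    \nu(A)\le1-\nu(B_r)\le1-\Phi\left(\Phi^{-1}(\beta)+\frac{r}{\sqrt{2\tau}}\right).
\end{align*}

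Finally let \(r\uparrow d\). Since \(\Phi\) is continuous, this yields \eqref{eq:one-sided-concentration}. If \(\beta=1\), then \(\Phi^{-1}(\beta)=+\infty\) and the claim reads \(\nu(A)\le0\), which is the case \(\nu(B)=1\) already handled. There is no genuine obstacle here. The only points needing care are these endpoint conventions for \(\Phi^{-1}\) and the strict inequality \(r<d\), which ensures \(A\cap B_r=\emptyset\) with the open-neighborhood definition of \(E_r\).
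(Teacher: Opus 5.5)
Your proposal is correct and follows the paper's route exactly: the paper proves this by rerunning the enlargement argument from the two-set concentration theorem with \(\nu(B)\) replaced by its lower bound \(\beta\), which is what you do via monotonicity of \(\Phi\) and \(\Phi^{-1}\) followed by \(r\uparrow d\). Your explicit treatment of the endpoint case \(\nu(B)=1\) (equivalently \(\beta=1\)) is slightly more careful than the paper's one-line proof.
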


\begin{proof}
    This is the estimate in the proof of Theorem~\ref{thm:two-set-concentration}, with \(\nu(B)\) replaced by the lower bound \(\beta\).
\end{proof}

The enlargement estimate also controls quantiles of Lipschitz observables. For a measurable function \(F\), let
\begin{align}
    q_a(F)=\inf\{r:\nu(F\le r)\ge a\}
\end{align}
be its \textit{\(a\)-quantile.}

\begin{corollary}[Lipschitz quantiles]
\label{cor:lipschitz-quantiles}
    Let \(F:(M,g_s)\to\mathbb R\) be \(L\)-Lipschitz. Then, for every \(0<a<b<1\),
    \begin{align}
        q_b(F)-q_a(F)
        \le
        L\sqrt{2\tau}\,
        \bigl(\Phi^{-1}(b)-\Phi^{-1}(a)\bigr).
        \label{eq:lipschitz-quantiles}
    \end{align}
    In particular, if \(m\) is a median of \(F\), then, for every \(r>0\),
    \begin{align}
        \nu(F\ge m+r)
        \le
        1-\Phi\left(\frac{r}{L\sqrt{2\tau}}\right),
        \qquad
        \nu(F\le m-r)
        \le
        1-\Phi\left(\frac{r}{L\sqrt{2\tau}}\right),
    \end{align}
    with the evident interpretation when \(L=0\).
\end{corollary}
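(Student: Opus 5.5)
We derive the statement from Corollary~\ref{cor:enlargement}, using the elementary inclusion that the \(r\)-enlargement of a sublevel set of an \(L\)-Lipschitz function lies in a slightly larger sublevel set. The case \(L=0\) is trivial, since then \(F\) is constant and every quantile equals that constant. Assume \(L>0\).

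\textbf{Step 1: enlargement of sublevel sets.} Fix \(a\), and for \(\eta>0\) put \(E=\{F\le q_a(F)+\eta\}\). By the definition of the quantile and right-continuity of \(r\mapsto\nu(F\le r)\), we have \(\nu(E)\ge a\). If \(d_s(y,E)<\rho\), pick \(z\in E\) with \(d_s(y,z)<\rho\). Then
\begin{align*}
    F(y)< F(z)+L\rho\le q_a(F)+\eta+L\rho .
\end{align*}
Hence \(E_\rho\subset\{F< q_a(F)+\eta+L\rho\}\).

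\textbf{Step 2: apply the enlargement estimate.} Corollary~\ref{cor:enlargement} and the monotonicity of \(\Phi\) and \(\Phi^{-1}\) give
\begin{align*}
    \nu\bigl(F\le q_a(F)+\eta+L\rho\bigr)\ge\Phi\Bigl(\Phi^{-1}(a)+\frac{\rho}{\sqrt{2\tau}}\Bigr).
\end{align*}
Choose \(\rho=\sqrt{2\tau}\,\bigl(\Phi^{-1}(b)-\Phi^{-1}(a)\bigr)>0\). The right-hand side is then \(b\), so \(q_b(F)\le q_a(F)+\eta+L\rho\). Letting \(\eta\downarrow0\) yields \eqref{eq:lipschitz-quantiles}.

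\textbf{Step 3: median tails.} We run the same argument directly rather than going through quantiles, to avoid issues at \(a=1/2\) with ties.

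For the upper tail, let \(E=\{F\le m\}\). Since \(m\) is a median, \(\nu(E)\ge 1/2\). By Step 1 with \(\eta=0\),
\begin{align*}
    \{F\ge m+r\}\subset M\setminus E_\rho\qquad\text{for every }\rho<r/L.
\end{align*}
Hence
\begin{align*}
    \nu(F\ge m+r)\le 1-\Phi\bigl(\rho/\sqrt{2\tau}\bigr),
\end{align*}
using \(\Phi^{-1}(1/2)=0\). Letting \(\rho\uparrow r/L\) gives the bound.

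For the lower tail, apply the same argument to \(-F\), which is \(L\)-Lipschitz and has \(-m\) as a median, because \(\nu(F\ge m)\ge1/2\).

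\textbf{Main obstacle.} The only delicate points are bookkeeping. One must check that \(\nu(\{F\le q_a\})\ge a\), which follows from right-continuity of the distribution function. One must also handle strict versus non-strict inequalities in the enlargement \(E_\rho\), which is done by the limits \(\eta\downarrow0\) and \(\rho\uparrow r/L\).
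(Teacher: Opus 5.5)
Your proposal is correct and follows essentially the same route as the paper: enlarge the sublevel set \(\{F\le q_a(F)+\eta\}\), use the Lipschitz inclusion together with Corollary~\ref{cor:enlargement} at \(\rho=\sqrt{2\tau}\,\bigl(\Phi^{-1}(b)-\Phi^{-1}(a)\bigr)\), and let \(\eta\downarrow0\); for the median tails, apply the enlargement estimate directly to \(\{F\le m\}\) and then to \(-F\). Your handling of strict versus non-strict inclusions and the limit \(\rho\uparrow r/L\) only spells out bookkeeping that the paper leaves implicit.
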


\begin{proof}
    By the usual approximation of quantiles, it is enough to argue with \(E=\{F\le q_a(F)+\varepsilon\}\) and then let \(\varepsilon\downarrow0\). We have \(\nu(E)\ge a\). Since \(F\) is \(L\)-Lipschitz, \(E_\rho\subset \{F\le q_a(F)+\varepsilon+L\rho\}.\) By Corollary~\ref{cor:enlargement},
    \begin{align*}
        \nu(E_\rho)
        \ge
        \Phi\left(
        \Phi^{-1}(a)+\frac{\rho}{\sqrt{2\tau}}
        \right).
    \end{align*}
    Taking \(\rho = \sqrt{2\tau}\bigl(\Phi^{-1}(b)-\Phi^{-1}(a)\bigr),\) we get \(\nu(E_\rho)\ge b\). Hence
    \begin{align*}
        q_b(F)\le q_a(F)+\varepsilon+L\rho.
    \end{align*}
    Letting \(\varepsilon\downarrow0\) proves \eqref{eq:lipschitz-quantiles}.
    
    For the median tail, apply the enlargement estimate directly to \(\{F\le m\}\), which has measure at least \(1/2\), and then to \(\{-F\le -m\}\).
\end{proof}

\begin{remark}[Exponential integrability]
    The median tail bounds imply the corresponding Gaussian exponential integrability. If \(m\) is a median of an \(L\)-Lipschitz function \(F\), then
    \begin{align*}
        \nu(|F-m|\ge r)
        \le
        2\Phi\left(-\frac{r}{L\sqrt{2\tau}}\right),
        \qquad r\ge0,
    \end{align*}
    with the evident interpretation when \(L=0\). Thus every increasing function of \(|F-m|\) is dominated by the corresponding function of \(|G|\), where \(G\sim N(0,2\tau L^2)\). Consequently, for every \(\lambda\ge0\),
    \begin{align*}
        \int_M e^{\lambda |F-m|}\,d\nu
        \le
        2e^{\tau L^2\lambda^2}
        \Phi(\lambda L\sqrt{2\tau}),
    \end{align*}
    and, for \(0\le\beta<1/4\),
    \begin{align*}
        \int_M
        \exp\left(\beta\frac{|F-m|^2}{\tau L^2}\right)d\nu
        \le
        (1-4\beta)^{-1/2}.
    \end{align*}
\end{remark}

\begin{remark}
    The exponential estimate in \eqref{eq:two-set-product} is the Hein--Naber concentration bound \cite[Theorem~1.13]{MR3245102}. The quantile inequality \eqref{eq:two-set-quantile} is the corresponding Gaussian-profile form, and is sharp in the static Euclidean flow. Equivalently, every \(1\)-Lipschitz observable has \textit{width} at most \(2\sqrt{2\tau}\,\Phi^{-1}\left(1-\frac{\delta}{2}\right)\) on a set of \(\nu\)-measure at least \(1-\delta\).
\end{remark}

\section{Localization near Bamler's \texorpdfstring{\(H_n\)}{H}-centers}
\label{sec:Hn-localization}

We keep the notation from the previous sections: \(d\nu=K(x_0,t_0;\cdot,s)\,dg_s,\) \(\tau=t_0-s.\) We write \(d_s\) and \(B_s\) for the distance and balls of \(g_s\). Following Bamler \cite[Definition~3.10]{bamler2020entropy}, we call a point \(z\in M\) an \(H_n\)-center for \((x_0,t_0)\) at time \(s\) if
\begin{align}
    \int_M d_s(z,y)^2\,d\nu(y)\le H_n\tau,
    \qquad
    H_n=4+\frac{(n-1)\pi^2}{2}.
    \label{eq:Hn-center}
\end{align}
Thus, by Chebyshev inequality gives the rough localization estimate,
\begin{align*}
    \nu\left(B_s(z,\sqrt{A H_n\tau})\right)
    \ge
    1-\frac1A
    \qquad
    \text{for every }A>1.
\end{align*}
The point \(z\) is a probabilistic substitute, at the earlier time \(s\), for the spacetime basepoint \((x_0,t_0)\). The one-sided concentration estimate from the previous section upgrades the preceding \(L^2\)-localization into Gaussian-profile tail bounds. This gives a quantitative version of localization near \(H_n\)-centers in the same spirit as Bamler's heat-kernel estimates \cite[Theorem 3.14]{bamler2020entropy}.

\begin{theorem}[Tails from an \(H_n\)-center]
\label{thm:Hn-tail}
    Let \(z\) satisfy \eqref{eq:Hn-center}. Then, for every \(A>1\) and every \(r\ge\sqrt{A H_n\tau}\),
    \begin{align}
        \nu(M\setminus B_s(z,r))
        \le
        1-\Phi\left(
        \Phi^{-1}\left(1-\frac1A\right)
        +
        \frac{r-\sqrt{A H_n\tau}}{\sqrt{2\tau}}
        \right).
        \label{eq:Hn-profile-tail}
    \end{align}
    In particular,
    \begin{align}
        \nu(M\setminus B_s(z,r))
        \le
        \Phi\left(
        -\frac{r-\sqrt{2H_n\tau}}{\sqrt{2\tau}}
        \right),
        \qquad
        r\ge \sqrt{2H_n\tau}.
        \label{eq:Hn-median-tail}
    \end{align}
    Consequently, for every \(r\ge0\),
    \begin{align}
        \nu(M\setminus B_s(z,r))
        \le
        \exp\left(
        -\frac{(r-\sqrt{2H_n\tau})_+^2}{4\tau}
        \right).
        \label{eq:Hn-exp-tail}
    \end{align}
\end{theorem}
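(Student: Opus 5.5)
The plan is to combine the Chebyshev localization with Corollary~\ref{cor:one-sided-concentration}. Fix \(A>1\) and put \(\rho=\sqrt{AH_n\tau}\). Take \(B=B_s(z,\rho)\); by Chebyshev and \eqref{eq:Hn-center}, \(\nu(B)\ge 1-\frac1A=:\beta>0\). For \(r>\rho\), let \(A'=M\setminus B_s(z,r)\). By the triangle inequality, any \(a\in A'\) and \(b\in B\) satisfy \(d_s(a,b)\ge d_s(z,a)-d_s(z,b)> r-\rho\). Hence \(d_s(A',B)\ge r-\rho>0\).

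Since \(\Phi\) is increasing, Corollary~\ref{cor:one-sided-concentration} then gives
\begin{align*}
    \nu(A')\le 1-\Phi\left(\Phi^{-1}(\beta)+\frac{r-\rho}{\sqrt{2\tau}}\right),
\end{align*}
which is \eqref{eq:Hn-profile-tail}. At the endpoint \(r=\rho\) the right-hand side is \(1-\beta=\frac1A\), and the bound follows directly from Chebyshev. The one subtlety is the strictness of the Chebyshev bound with open versus closed balls. To avoid it, I would work with the closed ball of radius \(\rho\), or with radius \(\rho-\varepsilon\) followed by a limit.

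For \eqref{eq:Hn-median-tail}, take \(A=2\). Then \(\Phi^{-1}(1/2)=0\), and \(1-\Phi(x)=\Phi(-x)\) gives the displayed form.

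For \eqref{eq:Hn-exp-tail}, use \(\Phi(-u)\le e^{-u^2/2}\) for \(u\ge0\), applied with \(u=(r-\sqrt{2H_n\tau})/\sqrt{2\tau}\). This gives the exponent \(-(r-\sqrt{2H_n\tau})^2/(4\tau)\) for \(r\ge\sqrt{2H_n\tau}\). For \(r<\sqrt{2H_n\tau}\) the bound is the trivial estimate \(\nu\le1\). The step that needs the most care is the Chebyshev measure bound \(\nu(B_s(z,\rho))\ge1-1/A\) with the correct open or closed ball convention. Everything else is a direct substitution into earlier results.
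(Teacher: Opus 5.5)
Your proposal is correct and follows the paper's own route. Chebyshev gives \(\nu(B_s(z,\sqrt{AH_n\tau}))\ge 1-\frac1A\), the triangle inequality gives separation \(r-\sqrt{AH_n\tau}\), and Corollary~\ref{cor:one-sided-concentration} with \(\beta=1-\frac1A\), then \(A=2\) and \(\Phi(-u)\le e^{-u^2/2}\), give the three bounds; the open/closed ball concern is harmless, since Markov's inequality \(\nu(\{d_s(z,\cdot)\ge\rho\})\le\rho^{-2}\int_M d_s(z,y)^2\,d\nu(y)\le\frac1A\) already handles the open ball.
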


\begin{proof}
    Put \(R_A=\sqrt{A H_n\tau}.\) By \eqref{eq:Hn-center}, \(\nu(B_s(z,R_A))\ge 1-\frac1A.\) If \(r>R_A\), then
    \begin{align*}
        d_s(M\setminus B_s(z,r),B_s(z,R_A))\ge r-R_A
    \end{align*}
    up to the usual open--closed approximation of balls. Applying Corollary~\ref{cor:one-sided-concentration} with \(\beta=1-1/A\) gives \eqref{eq:Hn-profile-tail}. The endpoint \(r=R_A\) follows directly from Chebyshev inequality. Taking \(A=2\) gives \eqref{eq:Hn-median-tail}, since \(\Phi^{-1}(1/2)=0\). Finally, for
    \begin{align*}
        u=\frac{r-\sqrt{2H_n\tau}}{\sqrt{2\tau}}\ge0,
    \end{align*}
    the bound \(\Phi(-u)\le e^{-u^2/2}\) gives \eqref{eq:Hn-exp-tail}. When \(r<\sqrt{2H_n\tau}\), the estimate is trivial.
\end{proof}

The same tail estimate can be expressed in terms of quantiles of the distance function \(y\mapsto d_s(z,y)\). We record the median-based form. Let \(q_b\) be the \(b\)-quantile of \(y\mapsto d_s(z,y)\), namely
\begin{align*}
    q_b=\inf\{r:\nu(B_s(z,r))\ge b\}.
\end{align*}

\begin{corollary}[Quantile localization]
\label{cor:Hn-quantile}
    For every \(1/2\le b<1\),
    \begin{align}
        q_b
        \le
        \sqrt{2H_n\tau}
        +
        \sqrt{2\tau}\,\Phi^{-1}(b).
        \label{eq:Hn-quantile}
    \end{align}
\end{corollary}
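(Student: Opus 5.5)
The plan is to convert the median-based tail bound \eqref{eq:Hn-median-tail} of Theorem~\ref{thm:Hn-tail} into a quantile statement by inverting \(\Phi\). Fix \(1/2\le b<1\) and put
\begin{align*}
    r_b=\sqrt{2H_n\tau}+\sqrt{2\tau}\,\Phi^{-1}(b).
\end{align*}
Since \(b\ge1/2\), we have \(\Phi^{-1}(b)\ge0\), so \(r_b\ge\sqrt{2H_n\tau}\) and \eqref{eq:Hn-median-tail} applies at \(r=r_b\).

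At \(r=r_b\), the tail bound gives
\begin{align*}
    \nu(M\setminus B_s(z,r_b))\le\Phi(-\Phi^{-1}(b))=1-b.
\end{align*}
Hence \(\nu(B_s(z,r_b))\ge b\), and by the definition of \(q_b\) as an infimum, \(q_b\le r_b\). This is exactly \eqref{eq:Hn-quantile}.

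The only point needing care is the open/closed ball convention. Theorem~\ref{thm:Hn-tail} controls the complement of the open ball \(B_s(z,r)\), while \(q_b\) is defined via \(\nu(B_s(z,r))\ge b\) with the same open balls. So no approximation is needed, provided the statement of Theorem~\ref{thm:Hn-tail} holds verbatim for open balls.

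If one prefers to avoid relying on the endpoint, apply the bound at \(r_b+\varepsilon\) and let \(\varepsilon\downarrow0\). This gives \(q_b\le r_b+\varepsilon\) for every \(\varepsilon>0\), hence \(q_b\le r_b\).

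For \(b=1/2\) the statement reduces to \(q_{1/2}\le\sqrt{2H_n\tau}\). This also follows directly from Chebyshev's inequality with \(A=2\), which serves as a consistency check on the endpoint.
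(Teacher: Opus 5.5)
Your proposal is correct and is essentially the paper's proof. You set \(r=\sqrt{2H_n\tau}+\sqrt{2\tau}\,\Phi^{-1}(b)\), apply \eqref{eq:Hn-median-tail} to get \(\nu(M\setminus B_s(z,r))\le 1-b\), and read off \(q_b\le r\) from the definition of the quantile; the only difference is that the paper treats \(b=1/2\) separately by Chebyshev, whereas you cover it with the endpoint \(r=\sqrt{2H_n\tau}\) of \eqref{eq:Hn-median-tail}, which the paper itself justifies by Chebyshev.
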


\begin{proof}
    For \(b=1/2\), the claim follows from \eqref{eq:Hn-center} and Chebyshev inequality. If \(b>1/2\), set \(r=\sqrt{2H_n\tau}+\sqrt{2\tau}\,\Phi^{-1}(b).\) Then \eqref{eq:Hn-median-tail} gives
    \begin{align*}
        \nu(M\setminus B_s(z,r))
        \le
        \Phi(-\Phi^{-1}(b))
        =
        1-b.
    \end{align*}
    Thus \(\nu(B_s(z,r))\ge b\), which is \eqref{eq:Hn-quantile}.
\end{proof}

The tail comparison also yields half-Gaussian moment bounds for the excess distance beyond the radius \(\sqrt{2H_n\tau}\).

\begin{corollary}[Exponential moments of the excess distance]
\label{cor:Hn-excess-moments}
    Set \(Y(y)=\bigl(d_s(z,y)-\sqrt{2H_n\tau}\bigr)_+.\) Then, for every \(\lambda\ge0\),
    \begin{align}
        \int_M e^{\lambda Y}\,d\nu
        \le
        \frac12
        +
        e^{\tau\lambda^2}\Phi(\lambda\sqrt{2\tau}).
        \label{eq:Hn-exp-moment}
    \end{align}
    Equivalently,
    \begin{align}
        \int_M e^{\lambda d_s(z,y)}\,d\nu(y)
        \le
        e^{\lambda\sqrt{2H_n\tau}}
        \left[
        \frac12
        +
        e^{\tau\lambda^2}\Phi(\lambda\sqrt{2\tau})
        \right].
        \label{eq:Hn-distance-exp-moment}
    \end{align}
    Moreover, for every \(0\le\beta<1/4\),
    \begin{align}
        \int_M
        \exp\left(\beta\frac{Y^2}{\tau}\right)d\nu
        \le
        \frac12+\frac12(1-4\beta)^{-1/2}.
        \label{eq:Hn-square-exp-moment}
    \end{align}
\end{corollary}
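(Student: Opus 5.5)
The plan is to reduce everything to a stochastic domination statement from the median tail \eqref{eq:Hn-median-tail}. Write \(\sigma=\sqrt{2\tau}\) and let \(G\sim N(0,\sigma^2)\). From Theorem~\ref{thm:Hn-tail}, for every \(u>0\),
\begin{align*}
    \nu(Y>u)\le \nu\bigl(M\setminus B_s(z,\sqrt{2H_n\tau}+u)\bigr)\le \Phi(-u/\sigma)=\mathbb P(G>u).
\end{align*}
Also \(\nu(Y>0)\le\nu\bigl(d_s(z,\cdot)\ge\sqrt{2H_n\tau}\bigr)\le 1/2\) by Chebyshev and \eqref{eq:Hn-center} (this is the \(r=R_2\) endpoint case). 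Hence \(Y\) is stochastically dominated by \(G_+=\max(G,0)\), whose law is the mixture \(\tfrac12\delta_0+\) the law of \(G\) restricted to \((0,\infty)\).

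For any nondecreasing \(\psi\ge0\) with \(\psi(0)=1\), the layer-cake formula gives
\begin{align*}
    \int_M\psi(Y)\,d\nu=1+\int_0^\infty\psi'(u)\,\nu(Y>u)\,du\le 1+\int_0^\infty\psi'(u)\,\mathbb P(G>u)\,du=\mathbb E\,\psi(G_+).
\end{align*}
Then \(\mathbb E\,\psi(G_+)=\tfrac12+\mathbb E[\psi(G)\mathbf 1_{G>0}]\). For the first claim I will take \(\psi(u)=e^{\lambda u}\) and compute \(\mathbb E[e^{\lambda G}\mathbf 1_{G>0}]\) by completing the square. The substitution \(G=\sigma Z\) gives \(e^{\lambda^2\sigma^2/2}\,\mathbb P(Z>-\lambda\sigma)=e^{\tau\lambda^2}\Phi(\lambda\sqrt{2\tau})\), which is \eqref{eq:Hn-exp-moment}.

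The equivalent form \eqref{eq:Hn-distance-exp-moment} follows from the pointwise bound \(d_s(z,y)\le\sqrt{2H_n\tau}+Y(y)\). For \eqref{eq:Hn-square-exp-moment} I will take \(\psi(u)=e^{\beta u^2/\tau}\). Then \(\mathbb E[e^{\beta G^2/\tau}\mathbf 1_{G>0}]=\tfrac12\mathbb E\,e^{2\beta Z^2}=\tfrac12(1-4\beta)^{-1/2}\), by symmetry and the Gaussian integral, valid for \(4\beta<1\).

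The only delicate point is the mass at zero. One must make sure the domination is applied only for \(u>0\), and that the \(\tfrac12\) comes from the \(\nu(Y=0)\) part rather than from bounding \(\psi(0)\cdot 1\). The layer-cake identity above handles this automatically, so no genuine obstacle is expected beyond this bookkeeping and the open/closed ball approximation already used in Theorem~\ref{thm:Hn-tail}.
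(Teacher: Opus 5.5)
Your proposal is correct and is essentially the paper's proof. The median tail \eqref{eq:Hn-median-tail} gives \(\nu(Y>u)\le\mathbb P(G_+>u)\) for \(u>0\), so increasing functions of \(Y\) are dominated by those of \(G_+\), and the bounds then follow from the half-Gaussian computations of \(\mathbb E e^{\lambda G_+}\) and \(\mathbb E\exp(\beta G_+^2/\tau)\); your layer-cake identity just makes explicit the domination step the paper states in one line, and the Chebyshev check at \(u=0\) is harmless but unnecessary.
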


\begin{proof}
    Let \(G\sim N(0,2\tau)\) and \(G_+=\max\{G,0\}\). By \eqref{eq:Hn-median-tail}, for every \(r>0\),
    \begin{align*}
        \nu(Y>r)
        \le
        \Phi\left(-\frac{r}{\sqrt{2\tau}}\right)
        =
        \mathbb P(G_+>r).
    \end{align*}
    Thus every increasing function of \(Y\) is dominated by that of \(G_+\). Applying this domination to the increasing functions \(e^{\lambda r}\) and \(\exp(\beta r^2/\tau)\) gives the desired estimates after the elementary half-Gaussian computations
    \begin{align*}
        \mathbb E e^{\lambda G_+}
        =
        \frac12+e^{\tau\lambda^2}\Phi(\lambda\sqrt{2\tau}),\qquad \mathbb E\exp\left(\beta\frac{G_+^2}{\tau}\right)
        =
        \frac12+\frac12(1-4\beta)^{-1/2}.
    \end{align*}
\end{proof}

\section{Integral estimates for logarithmic derivatives of the heat-kernel}
\label{sec:heat-kernel-estimates}

We next record two consequences for the conjugate heat kernel. The first is an averaged upper bound for the heat-kernel mass of sets separated from a region carrying definite heat-kernel mass. The second concerns logarithmic derivatives of the heat kernel. There we combine Bamler's integral logarithmic-derivative estimate \eqref{eq:score-set-bound} with a rearrangement argument to obtain convex-order and moment bounds with the universal Gaussian model constants.

Throughout this section we fix \(x\in M\), \(s<t\), put \(\tau=t-s\), and write \(d\nu(y)=K(x,t;y,s)\,dg_s(y).\) All distances and volumes in this section are taken with respect to \(g_s\).

\subsection{Average upper bounds}

The one-sided concentration estimate converts separation from a set of definite heat-kernel mass into a bound for the heat-kernel mass of the separated set. This is the same concentration mechanism that appears in the average estimates of Hein--Naber; the form below keeps the full Gaussian quantile profile. Compare \cite[Corollary~1.14]{MR3245102}.

\begin{proposition}[Average heat-kernel upper bound]
\label{prop:average-heat-kernel-upper}
    Let \(A,B\subset M\) be Borel sets and assume \(\nu(B)\ge\beta>0\). If \(D=d_s(A,B)>0\), then
    \begin{align}
        \int_A K(x,t;y,s)\,dg_s(y)
        \le
        1-\Phi\left(
        \Phi^{-1}(\beta)+\frac{D}{\sqrt{2\tau}}
        \right).
        \label{eq:average-HK-mass}
    \end{align}
    Consequently, if \(|A|_{g_s}\ge v_0\tau^{n/2}\), then
    \begin{align}
        \frac{1}{|A|_{g_s}}
        \int_A K(x,t;y,s)\,dg_s(y)
        \le
        \frac{1}{v_0\tau^{n/2}}
        \left[
        1-\Phi\left(
        \Phi^{-1}(\beta)+\frac{D}{\sqrt{2\tau}}
        \right)
        \right].
        \label{eq:average-HK-upper}
    \end{align}
\end{proposition}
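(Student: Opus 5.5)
The first estimate \eqref{eq:average-HK-mass} is Corollary~\ref{cor:one-sided-concentration} applied verbatim to the present measure. The left-hand side of \eqref{eq:average-HK-mass} is, by definition, \(\nu(A)\), where \(d\nu=K(x,t;\cdot,s)\,dg_s\) is the conjugate heat-kernel measure based at \((x,t)\) and evaluated at time \(s\), with \(\tau=t-s\).

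All results of Sections~\ref{sec:bobkov} and~\ref{sec:concentration} were proved for an arbitrary basepoint \((x_0,t_0)\) and an arbitrary earlier time. So I will first note that they apply with \((x_0,t_0)\) replaced by \((x,t)\). Then I will invoke Corollary~\ref{cor:one-sided-concentration} with the given \(A\), \(B\), \(\beta\) and \(d_s(A,B)=D>0\). This yields
\begin{align*}
    \nu(A)\le 1-\Phi\left(\Phi^{-1}(\beta)+\frac{D}{\sqrt{2\tau}}\right),
\end{align*}
which is exactly \eqref{eq:average-HK-mass}. The hypotheses match exactly: \(\beta>0\), so \(\Phi^{-1}(\beta)\) is finite or equals \(+\infty\) when \(\beta=1\). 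In the latter case the right-hand side is \(0\), and the bound is still what the corollary gives under the extended interpretation of \(\Phi^{-1}\).

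For \eqref{eq:average-HK-upper}, I will divide \eqref{eq:average-HK-mass} by \(|A|_{g_s}\). The assumption \(|A|_{g_s}\ge v_0\tau^{n/2}>0\) gives \(1/|A|_{g_s}\le 1/(v_0\tau^{n/2})\). Since the bracketed quantity \(1-\Phi(\cdots)\) is nonnegative, multiplying the two inequalities preserves the direction and gives the claim.

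There is no real obstacle here. The only point needing a word is that the basepoint and time labels in Corollary~\ref{cor:one-sided-concentration} are generic. I will also remark that the distance \(D\) is taken with respect to \(g_s\), which is the metric used in the enlargement estimate of Corollary~\ref{cor:enlargement}.
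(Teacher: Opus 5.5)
Your proposal is correct and is essentially the paper's proof: recognize the left-hand side as \(\nu(A)\) for the measure based at \((x,t)\), apply Corollary~\ref{cor:one-sided-concentration}, and divide by the lower bound on \(|A|_{g_s}\). Your extra remarks, on the \(\beta=1\) endpoint and on the nonnegativity of \(1-\Phi(\cdot)\) in the division step, are fine but add nothing essential.
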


\begin{proof}
    Since \(\nu(A)=\int_A K(x,t;y,s)\,dg_s(y)\), the first estimate is exactly Corollary~\ref{cor:one-sided-concentration}. Dividing by the lower bound for \(|A|_{g_s}\) gives \eqref{eq:average-HK-upper}.
\end{proof}

For fixed \(\beta\), the right-hand side has the Gaussian envelope
\begin{align*}
    1-\Phi\left(
    \Phi^{-1}(\beta)+\frac{D}{\sqrt{2\tau}}
    \right)
    \le
    C(\beta)
    \exp\left(
    -\frac{(D-C(\beta)\sqrt{\tau})_+^2}{4\tau}
    \right).
\end{align*}
Thus the quantile estimate recovers the usual Gaussian off-diagonal decay at the averaged level. Obtaining pointwise heat-kernel bounds requires additional analytic input, and we do not pursue that here.

\subsection{Logarithmic derivatives of the heat kernel}

We next estimate logarithmic derivatives of the heat kernel in the basepoint variable, refining Bamler's integral estimate \cite[Proposition~4.2]{bamler2020entropy}.

Fix a unit vector \(v\in T_xM\). Define the logarithmic derivative
\begin{align}
    S_v(y)
    =
    \left\langle
    \nabla_x\log K(x,t;y,s),v
    \right\rangle_{g_t}.
\end{align}
Equivalently, \(S_v(y)K(x,t;y,s)=\partial_v^xK(x,t;y,s).\) By parabolic rescaling and Bamler's equation \cite[(4.5)]{bamler2020entropy}, for every Borel set \(X\subset M\),
\begin{align}
    \left|
    \int_X S_v\,d\nu
    \right|
    \le
    \frac{1}{\sqrt{2\tau}}I(\nu(X)).
    \label{eq:score-set-bound}
\end{align}
Indeed, Bamler's one-dimensional profile is normalized with variance \(2\). In the notation of this paper, his function \(\Phi'(\Phi^{-1}(a))\) is \(I(a)/\sqrt2\). Applying his one-sided estimate to \(v\) and to \(-v\), and then rescaling from time \(1\) to time \(\tau=t-s\), gives \eqref{eq:score-set-bound}.

The following result says that the logarithmic derivative is dominated, in convex order, by the corresponding Euclidean Gaussian model.

\begin{theorem}[Convex order for logarithmic derivatives]
\label{thm:score-convex-order}
    Let \(Z_\tau\sim N(0,1/(2\tau))\). Then for every convex function \(\psi:\mathbb R\to\mathbb R\) for which both sides are well-defined,
    \begin{align}
        \int_M \psi(S_v)\,d\nu
        \le
        \mathbb E\psi(Z_\tau).
        \label{eq:score-convex-order}
    \end{align}
\end{theorem}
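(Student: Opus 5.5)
We will deduce \eqref{eq:score-convex-order} from the set bound \eqref{eq:score-set-bound} by a rearrangement (majorization) argument. Throughout we work in the quantile parametrization of \(S_v\). First we check that \(\int_M S_v\,d\nu=0\): apply \eqref{eq:score-set-bound} with \(X=M\) and use \(I(1)=0\). This also follows from \(\int_M K(x,t;y,s)\,dg_s(y)=1\) by differentiating in \(x\).

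Let \(F\) be the distribution function of \(S_v\) under \(\nu\), and let \(q(a)\), \(a\in(0,1)\), be its quantile function. Then \(S_v\) has the same law as \(q(U)\) with \(U\) uniform on \((0,1)\). Let \(G\) be the quantile function of \(Z_\tau\), namely \(G(a)=\frac1{\sqrt{2\tau}}\Phi^{-1}(a)\).

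The key step is to show that for every \(a\in[0,1]\)
\begin{align*}
    \int_a^1 q(b)\,db \le \int_a^1 G(b)\,db = \frac{1}{\sqrt{2\tau}}I(a),
\end{align*}
with equality of total integrals at \(a=0\), both being \(0\). The identity on the right uses \(\int_a^1\Phi^{-1}(b)\,db=\varphi(\Phi^{-1}(a))\).

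To obtain the left-hand bound for a given \(a\), we take \(X\) to be a superlevel set \(\{S_v>c\}\), possibly together with part of the level set \(\{S_v=c\}\), so that \(\nu(X)=1-a\). Then \(\int_X S_v\,d\nu=\int_a^1 q\), and \eqref{eq:score-set-bound} together with \(I(1-a)=I(a)\) gives the claim.

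Splitting the level set is the one technical point. Since \(\nu\) has a smooth positive density and \(\{S_v=c\}\) is a Borel set, \(\nu\) is nonatomic, so we can choose a Borel subset of \(\{S_v=c\}\) of any prescribed measure. Alternatively, we may apply \eqref{eq:score-set-bound} only to genuine superlevel sets and interpolate. Between consecutive superlevel measures, \(a\mapsto\int_a^1 q\) is affine, because \(q\) is constant there, while \(I\) is concave; so the bound at the endpoints implies the bound in between.

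Given the majorization, the convex-order conclusion is the classical Hardy--Littlewood--Pólya / Karamata criterion. For increasing quantile functions \(q\) and \(G\) with equal means and \(\int_a^1 q\le\int_a^1 G\) for all \(a\), one has \(\mathbb E\psi(q(U))\le\mathbb E\psi(G(U))\) for every convex \(\psi\).

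We verify this directly in two stages. First, for \(\psi(r)=(r-c)_+\), we use
\begin{align*}
    \mathbb E (q(U)-c)_+=\sup_{a}\int_a^1 (q(b)-c)\,db \le \sup_a \int_a^1 (G(b)-c)\,db = \mathbb E(G(U)-c)_+.
\end{align*}
The same argument applies to \((c-r)_+\) via the equality of means. Second, every convex \(\psi\) with both sides well-defined can be written as an affine function plus a mixture of such hinge functions, \(\psi(r)=\alpha+\beta r+\int(r-c)_+\,d\mu(c)+\int(c-r)_+\,d\mu'(c)\), or approximated monotonically by such. The affine part contributes equally to both sides by the equality of means, and monotone convergence passes the hinge inequalities to the integral.

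I expect the main obstacle to be integrability and the limiting argument. The function \(S_v\) is bounded on a closed manifold for \(s<t\), so \(q\) is bounded and all integrals of \(\psi(S_v)\) are finite. On the Gaussian side, \(\mathbb E\psi(Z_\tau)\) may be \(+\infty\), in which case there is nothing to prove. The hinge approximation therefore needs only monotone convergence on the \(Z_\tau\) side, together with the bounded convergence on the \(S_v\) side.
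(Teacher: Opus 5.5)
Your proof is correct and follows essentially the same route as the paper. The steps match: mean zero, then the set bound \eqref{eq:score-set-bound} applied to (split) superlevel sets to get $\int_a^1 q\le\frac{1}{\sqrt{2\tau}}I(a)=\int_a^1 G$ (the paper's $\int_0^a S_v^*\le\frac{1}{\sqrt{2\tau}}I(a)$, written with increasing quantiles), and then the Hardy--Littlewood--P\'olya convex-order criterion. The only differences are minor: you verify that criterion directly via hinge functions and Tonelli instead of citing it, and your concavity-interpolation remark shows the nonatomicity of $\nu$ is not really needed.
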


\begin{proof}
    First,
    \begin{align*}
        \int_M S_v\,d\nu
        =
        \partial_v^x\int_M K(x,t;y,s)\,dg_s(y)
        =
        \partial_v^x 1
        =
        0.
    \end{align*}
    Thus \(S_v\) and \(Z_\tau\) have the same mean.
    
    Let \(Y^*\) denote the decreasing rearrangement of a real-valued random variable \(Y\):
    \begin{align*}
        Y^*(a)
        \coloneqq
        \inf\{\lambda:\mathbb P(Y>\lambda)\le a\}.
    \end{align*}
    Since the measure \(\nu\) is non-atomic, the Hardy--Littlewood maximal rearrangement identity gives, for \(0<a<1\),
    \begin{align*}
        \int_0^a S_v^*(r)\,dr
        =
        \sup_{\nu(X)=a}\int_X S_v\,d\nu.
    \end{align*}
    By \eqref{eq:score-set-bound},
    \begin{align}
        \int_0^a S_v^*(r)\,dr
        \le
        \frac1{\sqrt{2\tau}}I(a).
        \label{eq:score-rearrangement}
    \end{align}
    For \(Z_\tau\), one has \(Z_\tau^*(r) = \frac1{\sqrt{2\tau}}\Phi^{-1}(1-r),\) and hence
    \begin{align}
        \int_0^a Z_\tau^*(r)\,dr
        =
        \frac1{\sqrt{2\tau}}I(a).
        \label{eq:Gaussian-score-rearrangement}
    \end{align}
    Indeed,
    \begin{align*}
        \int_0^a\Phi^{-1}(1-r)\,dr
        =
        \int_{\Phi^{-1}(1-a)}^\infty z\varphi(z)\,dz
        =
        \varphi(\Phi^{-1}(1-a))
        =
        I(a).
    \end{align*}
    Combining \eqref{eq:score-rearrangement} and \eqref{eq:Gaussian-score-rearrangement}, and using equality of the means, the Hardy--Littlewood--Pólya characterization of convex order proves \eqref{eq:score-convex-order}; see \cite{MR46395} or \cite[Theorem~3.A.5]{MR2265633}.
\end{proof}

The moment estimates now follow from the Gaussian model.

\begin{corollary}[Moment estimates for logarithmic derivatives]
\label{cor:score-moments}
    For every \(p\ge1\),
    \begin{align}
        \tau^{p/2}\int_M |S_v|^p\,d\nu
        \le
        \frac{\Gamma((p+1)/2)}{\sqrt{\pi}}.
        \label{eq:directional-score-moment}
    \end{align}
    Consequently,
    \begin{align}
        \tau^{p/2}
        \int_M
        |\nabla_x\log K(x,t;y,s)|_{g_t}^p\,d\nu(y)
        \le
        \frac{\Gamma((n+p)/2)}{\Gamma(n/2)}.
        \label{eq:vector-score-moment}
    \end{align}
    Equivalently,
    \begin{align}
        \tau^{p/2}
        \int_M
        \frac{|\nabla_xK(x,t;y,s)|_{g_t}^p}
        {K(x,t;y,s)^{p-1}}
        \,dg_s(y)
        \le
        \frac{\Gamma((n+p)/2)}{\Gamma(n/2)}.
        \label{eq:integral-HK-gradient}
    \end{align}
\end{corollary}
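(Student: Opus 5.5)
The directional bound \eqref{eq:directional-score-moment} follows by applying Theorem~\ref{thm:score-convex-order} with the convex function \(\psi(r)=|r|^p\), \(p\ge1\). Both sides are finite: \(M\) is closed and \(K\) is smooth and positive for \(s<t\), so \(S_v\) is bounded. This gives
\begin{align*}
    \int_M|S_v|^p\,d\nu\le\mathbb E|Z_\tau|^p=(2\tau)^{-p/2}\,\mathbb E|N|^p,
\end{align*}
where \(N\sim N(0,1)\). Since \(\mathbb E|N|^p=2^{p/2}\Gamma((p+1)/2)/\sqrt\pi\), the right-hand side equals \(\tau^{-p/2}\Gamma((p+1)/2)/\sqrt\pi\). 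This is \eqref{eq:directional-score-moment}.

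For the vector bound \eqref{eq:vector-score-moment}, I would average the directional bound over the unit sphere of \((T_xM,g_t)\). Let \(V=\nabla_x\log K(x,t;y,s)\). For every fixed vector \(V\) and uniformly distributed unit \(v\in S^{n-1}\), one has
\begin{align*}
    \mathbb E_v|\langle V,v\rangle|^p=c_{n,p}|V|^p,
    \qquad
    c_{n,p}=\mathbb E|\theta_1|^p=\frac{\Gamma(n/2)\Gamma((p+1)/2)}{\sqrt\pi\,\Gamma((n+p)/2)}.
\end{align*}
The formula for \(c_{n,p}\) comes from writing a standard Gaussian vector \(G\) as \(G=|G|\theta\) with \(|G|\) independent of \(\theta\), and dividing \(\mathbb E|G_1|^p\) by \(\mathbb E|G|^p=2^{p/2}\Gamma((n+p)/2)/\Gamma(n/2)\). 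Integrating \eqref{eq:directional-score-moment} over \(v\), and exchanging the order of integration by Fubini (the integrand is nonnegative), gives
\begin{align*}
    c_{n,p}\,\tau^{p/2}\int_M|V|^p\,d\nu\le\frac{\Gamma((p+1)/2)}{\sqrt\pi}.
\end{align*}
Dividing by \(c_{n,p}\) gives exactly \(\Gamma((n+p)/2)/\Gamma(n/2)\). This is the \(p\)-th moment of \(|G|\) for \(G\sim N(0,\operatorname{Id}_n/(2\tau))\), rescaled by \(\tau^{p/2}\), so the constant is again the Gaussian-model one.

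The equivalent form \eqref{eq:integral-HK-gradient} is a direct rewriting. Since \(|\nabla_x\log K|^p K=|\nabla_xK|^pK^{1-p}\) and \(d\nu=K\,dg_s\), the two integrals coincide.

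The only step needing care is the sphere-averaging constant \(c_{n,p}\). The rest is a direct application of the convex-order theorem and standard Gaussian moment formulas.
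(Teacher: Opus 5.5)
Your proposal is correct and follows the paper's proof: apply the convex-order theorem with $\psi(r)=|r|^p$, evaluate the Gaussian moment, average over the unit sphere in $(T_xM,g_t)$ using the constant $c_{n,p}$, and rewrite with $d\nu=K\,dg_s$. Your derivation of $c_{n,p}$ from the polar decomposition of a standard Gaussian vector justifies the sphere-average identity that the paper quotes without proof.
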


\begin{proof}
    Apply Theorem~\ref{thm:score-convex-order} to \(\psi(r)=|r|^p\). Since \(Z_\tau\sim N(0,1/(2\tau))\),
    \begin{align*}
        \mathbb E|Z_\tau|^p
        =
        \tau^{-p/2}
        \frac{\Gamma((p+1)/2)}{\sqrt{\pi}},
    \end{align*}
    which proves \eqref{eq:directional-score-moment}.
    
    For the vector estimate, write \(S(y)=\nabla_x\log K(x,t;y,s).\) Averaging the directional estimate over the unit sphere in \(T_xM\), with normalized surface measure, and using
    \begin{align*}
        \int_{S^{n-1}}|\langle S,\theta\rangle|^p\,d\theta
        =
        \frac{\Gamma(n/2)\Gamma((p+1)/2)}
        {\sqrt{\pi}\Gamma((n+p)/2)}
        |S|^p,
    \end{align*}
    gives \eqref{eq:vector-score-moment}. Finally, \eqref{eq:integral-HK-gradient} is the same estimate written with \(d\nu=K\,dg_s\).
\end{proof}

The same convex-order comparison gives localized bounds on subsets of small heat-kernel measure. We record the directional form, which is useful when the heat-kernel gradient is integrated over a prescribed region.

\begin{corollary}[Localized directional moments]
\label{cor:localized-score-moments}
    Let \(X\subset M\) be Borel, put \(a=\nu(X)\), and, if \(a>0\), set \(b_a=\Phi^{-1}\left(1-\frac a2\right).\) Then, for every \(p\ge1\),
    \begin{align}
        \int_X |S_v|^p\,d\nu
        \le
        (2\tau)^{-p/2}
        2\int_{b_a}^{\infty} r^p\varphi(r)\,dr,
        \label{eq:localized-directional-score}
    \end{align}
    with the convention that the right-hand side is \(0\) when \(a=0\). In particular,
    \begin{align*}
        \tau^{p/2}\int_X |S_v|^p\,d\nu
        \le
        C(p)\,\nu(X)
        \left(\log\frac{2}{\nu(X)}\right)^{p/2}
    \end{align*}
    for \(0<\nu(X)\le1\).
\end{corollary}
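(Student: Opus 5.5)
Write \(|S_v|^p\) as the maximum of two one-sided parts and apply the set bound \eqref{eq:score-set-bound} to each part separately, using the decreasing-rearrangement machinery from the proof of Theorem~\ref{thm:score-convex-order}.

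\emph{Step 1: reduce to a rearrangement integral.} For any measurable \(h\ge0\) and Borel \(X\) with \(\nu(X)=a\), we have \(\int_X h\,d\nu\le\int_0^a h^*(r)\,dr\). Applying this to \(h=|S_v|^p\), it suffices to bound \(\int_0^a (|S_v|^*)^p(r)\,dr\).

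\emph{Step 2: split into positive and negative parts.} Since \(|S_v|^p=(S_v)_+^p+(S_v)_-^p\), we get
\[
\int_X|S_v|^p\,d\nu\le\int_0^{a}((S_v)_+^*)^p\,dr+\int_0^{a}((S_v)_-^*)^p\,dr .
\]
This is the crude way to split. A sharper way is to note that \(|S_v|^*\) on \([0,a]\) is built from the largest values of \(S_v\) and of \(-S_v\), with total mass \(a\). The target constant, \(2\int_{b_a}^\infty r^p\varphi\) with \(b_a=\Phi^{-1}(1-a/2)\), is exactly \(\mathbb E[|G|^p;\,|G|\ge b_a]\) for a standard Gaussian \(G\). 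This suggests comparing with \(|Z_\tau|\) on its top \(a\)-fraction, where each sign contributes mass \(a/2\).

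\emph{Step 3: majorization of \(|S_v|^*\).} By \eqref{eq:score-set-bound} applied to \(v\) and \(-v\), for every Borel \(Y\),
\[
\int_Y |S_v|\,d\nu=\int_{Y\cap\{S_v>0\}}S_v\,d\nu-\int_{Y\cap\{S_v<0\}}S_v\,d\nu\le\frac{1}{\sqrt{2\tau}}\bigl(I(c)+I(\nu(Y)-c)\bigr)
\]
for some \(c\in[0,\nu(Y)]\). Since \(I\) is concave and symmetric, this is at most \(\frac{2}{\sqrt{2\tau}}I(\nu(Y)/2)\). Hence
\[
\int_0^{b}|S_v|^*(r)\,dr\le \frac{2}{\sqrt{2\tau}}I(b/2)\quad\text{for all } b\in[0,1].
\]
The right side equals \(\int_0^b |Z_\tau|^*(r)\,dr\), because \(|Z_\tau|^*(r)=\frac{1}{\sqrt{2\tau}}\Phi^{-1}(1-r/2)\) and the computation in the proof of Theorem~\ref{thm:score-convex-order} applies. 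So \(|S_v|^*\) is weakly submajorized by \(|Z_\tau|^*\) on \([0,1]\), and therefore on \([0,a]\) for every \(a\).

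\emph{Step 4: pass to \(p\)-th powers.} Weak submajorization on \([0,a]\) gives \(\int_0^a\psi(|S_v|^*)\le\int_0^a\psi(|Z_\tau|^*)\) for every increasing convex \(\psi\). This is the Hardy--Littlewood--Pólya/Karamata inequality on the interval \([0,a]\), applied to decreasing functions. Taking \(\psi(r)=r^p\) and substituting \(r=2(1-\Phi(z))\) gives
\[
\int_0^a(\Phi^{-1}(1-r/2))^p\,dr=2\int_{b_a}^\infty z^p\varphi(z)\,dz,
\]
which is \eqref{eq:localized-directional-score}.

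\emph{Step 5: the small-measure form.} Use the standard Gaussian tail asymptotics. For large \(b\), \(\int_b^\infty z^p\varphi\le C(p)\,b^{p-1}\varphi(b)\), and \(b_a\sim\sqrt{2\log(2/a)}\) with \(\varphi(b_a)\approx b_a\,a/2\). These give \(C(p)\,a\,(\log(2/a))^{p/2}\). For \(a\) bounded below, the claim follows trivially from Corollary~\ref{cor:score-moments}, after adjusting \(C(p)\).

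\emph{Main obstacle.} The main obstacle is Step 4, applying Karamata's inequality on a truncated interval \([0,a]\) rather than with a full-mean constraint. Because \(\psi\) is increasing and both functions are decreasing, the majorization only needs to hold on the initial segments \([0,b]\) with \(b\le a\). I would prove it directly: write \(r^p\) as an integral of the hinge functions \((r-\lambda)_+\), and bound each \(\int_0^a(|S_v|^*-\lambda)_+\,dr\) by \(\sup_{b\le a}\bigl(\int_0^b|S_v|^*-\lambda b\bigr)\).
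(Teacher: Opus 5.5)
Your argument is correct, but it reaches the key rearrangement bound by a different route than the paper.

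\textbf{The paper's route.} It does not derive any majorization for \(|S_v|\) directly. It applies the convex-order Theorem~\ref{thm:score-convex-order} to the convex test functions \(r\mapsto(|r|^p-c)_+\). This gives \(\int_X|S_v|^p\,d\nu\le ca+\mathbb E(|Z_\tau|^p-c)_+\) for every \(c\ge0\). Minimizing over \(c\), via the dual formula for \(\int_0^a h^*\), yields exactly \(\int_0^a(|Z_\tau|^p)^*(r)\,dr\).

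\textbf{Your route.} You bypass the convex-order theorem. From \eqref{eq:score-set-bound}, applied to \(v\) and \(-v\), you prove directly that \(|S_v|^*\) is weakly submajorized by \(|Z_\tau|^*\) on \([0,1]\). You then run a Karamata-type hinge-function argument on the truncated interval \([0,a]\). That truncation step is the primal counterpart of the paper's optimization over \(c\).

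\textbf{What each buys.} The paper's proof is shorter. The absolute value is absorbed into the convexity of \((|r|^p-c)_+\), and the bookkeeping of the two signs is already encoded in convex order; that theorem in turn uses the mean-zero identity \(\int S_v\,d\nu=0\) and the Hardy--Littlewood--P\'olya characterization. Your proof is self-contained from the set bound and needs neither ingredient. It also makes explicit the weak submajorization of \(|S_v|^*\) by \(|Z_\tau|^*\), which is the real content behind both arguments. The price is an extra concavity argument for \(I\) to combine the positive and negative parts.

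\textbf{Minor points in Step 3.} Write \(c_\pm=\nu(Y\cap\{\pm S_v>0\})\). You only know \(c_++c_-\le\nu(Y)\), since the zero set of \(S_v\) may carry mass, and \(I\) is not monotone on \([0,1]\). So taking \(c=c_+\) does not directly give your displayed bound. The correct justification is
\(I(c_+)+I(c_-)\le 2I\bigl(\tfrac{c_++c_-}{2}\bigr)\le 2I\bigl(\tfrac{\nu(Y)}{2}\bigr)\),
using concavity and then monotonicity of \(I\) on \([0,1/2]\); symmetry of \(I\) is not what is needed here.

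Step~2 is never used and can be dropped. Step~5 matches the paper's Gaussian tail estimate.
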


\begin{proof}
    The case \(a=0\) is trivial, so assume \(0<a\le1\). For \(c\ge0\), the function \(r\longmapsto (|r|^p-c)_+\) is convex. Hence Theorem~\ref{thm:score-convex-order} gives
    \begin{align*}
        \int_M (|S_v|^p-c)_+\,d\nu
        \le
        \mathbb E(|Z_\tau|^p-c)_+.
    \end{align*}
    Therefore
    \begin{align*}
        \int_X |S_v|^p\,d\nu
        \le
        ca+\mathbb E(|Z_\tau|^p-c)_+.
    \end{align*}
    Taking the infimum over \(c\ge0\), and using the Hardy--Littlewood maximal rearrangement formula for nonnegative random variables, gives
    \begin{align*}
        \int_X |S_v|^p\,d\nu
        \le
        \int_0^a (|Z_\tau|^p)^*(r)\,dr.
    \end{align*}
    Write \(Z_\tau=(2\tau)^{-1/2}G,\) \(G\sim N(0,1).\) The level \(b_a\) is chosen so that \(\mathbb P(|G|\ge b_a)=a,\) or equivalently \(b_a=\Phi^{-1}\left(1-\frac a2\right).\) Thus
    \begin{align*}
        \int_0^a (|Z_\tau|^p)^*(r)\,dr
        =
        \mathbb E\left[
        |Z_\tau|^p\mathbf 1_{\{|G|\ge b_a\}}
        \right]
        =
        (2\tau)^{-p/2}
        2\int_{b_a}^{\infty}r^p\varphi(r)\,dr.
    \end{align*}
    This proves \eqref{eq:localized-directional-score}. The logarithmic estimate follows from the standard Gaussian tail bound
    \begin{align*}
        \int_b^\infty r^p\varphi(r)\,dr
        \le
        C(p)(1-\Phi(b))(1+b^p),
        \qquad b\ge0,
    \end{align*}
    together with \(1-\Phi(b_a)=a/2\) and \(1+b_a^p\le C(p)\left(\log\frac2a\right)^{p/2}.\)
\end{proof}

\begin{remark}
    In the static Euclidean flow, \(S_v\) has law \(N(0,1/(2\tau))\). Thus the convex-order theorem and the directional moment constants are sharp as universal constants.
\end{remark}

\section{Gaussian-profile regularity}
\label{sec:profile-regularity}

We next combine the sharp isoperimetric profile with Bamler's \(\varepsilon\)-regularity theorem \cite[Theorems~10.2 and~10.3]{bamler2020entropy}. The lower bound from Theorem~\ref{thm:intro-GI} holds for every conjugate heat-kernel measure, with no regularity assumption. When the pointed Nash entropy is close to its Euclidean value, Bamler's theorem gives regularity charts in which the conjugate heat-kernel measure is close to the Euclidean Gaussian. In such charts, Euclidean half-spaces provide almost optimal competitors. This gives an almost sharp upper bound for the profile, and a corresponding stability statement for almost extremizing sets.

Let \((x,t)\in M\times I\), let \(r>0\), and assume \([t-r^2,t]\subset I\). For \(0<\theta<1\), set \(\nu_\theta=\nu_{x,t;t-\theta r^2}.\) We write \(\mathcal I_{\nu_\theta}\) for the weighted isoperimetric profile of \((M,g_{t-\theta r^2},\nu_\theta)\). Since the time difference is \(\theta r^2\), Theorem~\ref{thm:intro-GI} gives the universal lower bound
\begin{align}
    \mathcal I_{\nu_\theta}(a)
    \ge
    \frac{1}{\sqrt{2\theta}\,r}I(a),
    \qquad 0\le a\le1.
    \label{eq:universal-profile-lower}
\end{align}
Set \(K(x,t;y,t-r^2)=(4\pi r^2)^{-n/2}e^{-f(y)}.\) We define the pointed Nash entropy by
\begin{align*}
    \CMcal N_{x,t}(r^2)
    =
    \int_M f\,d\nu_{x,t;t-r^2}-\frac n2.
\end{align*}

We shall use the following compactness statement. The smooth Euclidean convergence is the standard compactness consequence of Bamler's \(\varepsilon\)-regularity theorem. We include the tightness conclusion because the stability argument requires convergence of the heat kernel mass together with the local smooth convergence on compact subsets.

\begin{proposition}
\label{prop:entropy-regular-Gaussian-convergence}
    Let \((M_i^n,g^i_\tau)_{\tau\in[-1,0]}\) be compact Ricci flows and let \(x_i\in M_i\). Assume \(\CMcal N^{(i)}_{x_i,0}(1)\to0.\) Fix \(0<\theta<1\), and set \(\nu_i=\nu^{(i)}_{x_i,0;-\theta}.\) Then, after passing to a subsequence, the pointed flows converge smoothly on compact subsets to the static Euclidean flow \((\mathbb R^n,g_{\mathrm{Euc}},0).\) More precisely, after a diagonal choice of convergence maps, for every \(R<\infty\) there are smooth embeddings \(\Psi_i:B_{\mathbb R^n}(0,R)\longrightarrow M_i,\) \(\Psi_i(0)=x_i,\) such that
    \begin{align*}
        \Psi_i^*g^i_{-\theta}\to g_{\mathrm{Euc}}
        \qquad
        \text{smoothly on }B(0,R),
    \end{align*}
    and
    \begin{align*}
        \Psi_i^*\nu_i
        \longrightarrow
        \gamma_\theta
        =
        (4\pi\theta)^{-n/2}e^{-|y|^2/4\theta}\,dy
    \end{align*}
    locally smoothly as measures. Moreover, the convergence is tight:
    \begin{align}
        \lim_{R\to\infty}\limsup_{i\to\infty}
        \nu_i\bigl(M_i\setminus\Psi_i(B(0,R))\bigr)
        =
        0.
        \label{eq:entropy-regular-tightness}
    \end{align}
\end{proposition}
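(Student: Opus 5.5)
We would obtain the smooth convergence directly from Bamler's \(\varepsilon\)-regularity theorem \cite[Theorems~10.2 and~10.3]{bamler2020entropy}, and then prove tightness separately using the concentration estimates of Section~\ref{sec:Hn-localization}.

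\textbf{Step 1: regularity and smooth convergence.} Since \(\CMcal N^{(i)}_{x_i,0}(1)\to0\), for any \(\varepsilon>0\) we have \(\CMcal N^{(i)}_{x_i,0}(1)\ge-\varepsilon\) once \(i\) is large. By monotonicity of the pointed Nash entropy in the scale, \(\CMcal N^{(i)}_{x_i,0}(\sigma)\ge-\varepsilon\) for all \(\sigma\le1\). Bamler's theorem then gives curvature bounds \(|\operatorname{Rm}|\le \delta(\varepsilon)\) on parabolic neighborhoods \(P(x_i,0;\rho)\) of unit scale. Here \(\rho=\rho(\varepsilon)\to\infty\) as \(\varepsilon\to0\) after rescaling, and \(\delta(\varepsilon)\to0\).

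In fact, it is simpler to apply the theorem at base points \((y,0)\) near \(x_i\). Entropy is almost monotone under small spatial displacement by Bamler's entropy oscillation bound, so the curvature radius at time \(0\) and earlier times goes to infinity uniformly on \(B_0(x_i,R)\) for every \(R\). Shi's derivative estimates give higher derivative bounds. Injectivity radius bounds follow from the non-collapsing implied by the entropy lower bound.

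Hamilton's compactness theorem then produces a subsequence converging smoothly, on compact subsets of spacetime, to a flat complete ancient flow with Nash entropy \(0\), hence the static \(\mathbb R^n\). Restricting to the slice \(-\theta\) gives the maps \(\Psi_i\) on \(B(0,R)\). A diagonal choice over \(R\in\mathbb N\) gives a single sequence.

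\textbf{Step 2: convergence of the heat kernel.} The conjugate heat kernels \(K^{(i)}(x_i,0;\cdot,\cdot)\) satisfy the conjugate heat equation. On the regular region they are locally bounded above, by Bamler's heat kernel upper bounds under curvature control, and bounded below. Parabolic regularity gives local smooth subconvergence to a positive solution \(K_\infty\) of the backward heat equation on \(\mathbb R^n\times(-1,0)\), with total mass at most \(1\) by Fatou.

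To identify \(K_\infty\) with the Gaussian centred at the origin, we would use that the entropy of the limit is \(0\), which forces equality in the Hein--Naber log-Sobolev inequality. Alternatively, we would use the \(\varepsilon\)-regularity statement that \(f_i\) is close to \(|y|^2/4\tau\) on compact sets.

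\textbf{Step 3: tightness.} We take \(z_i\) to be an \(H_n\)-center for \((x_i,0)\) at time \(-\theta\). Theorem~\ref{thm:Hn-tail} gives
\[
\nu_i\bigl(M_i\setminus B_{-\theta}(z_i,r)\bigr)\le \exp\bigl(-(r-\sqrt{2H_n\theta})_+^2/(4\theta)\bigr),
\]
uniformly in \(i\). It remains to show that \(d_{-\theta}(x_i,z_i)\) stays bounded. Because the flow is regular near \(x_i\) on the scale-one neighborhood, Bamler's comparison of \(H_n\)-centers with the basepoint along a regular worldline gives \(d_{-\theta}(x_i,z_i)\le C(n)\). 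In particular, \(B_{-\theta}(z_i,r)\subset \Psi_i(B(0,r+C+1))\) for large \(i\).

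Letting \(R\to\infty\) gives \eqref{eq:entropy-regular-tightness}. As a byproduct, the mass of the limit is \(1\), which also closes the identification in Step 2: a mass-one, entropy-zero limit must be \(\gamma_\theta\).

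\textbf{Main obstacle.} The delicate point is the uniform boundedness of \(d_{-\theta}(x_i,z_i)\), equivalently excluding escape of \(\nu_i\)-mass. We would handle it via the regularity of the worldline of \(x_i\) together with the fact that \(H_n\)-centers are \(W_1\)-close to \(\nu_i\). Once the \(H_n\)-tail bound is available, tightness is immediate.
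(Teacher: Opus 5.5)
Your proposal has the same architecture as the paper's proof. Smooth convergence to static \(\mathbb R^n\) comes from the large-neighborhood form of Bamler's \(\varepsilon\)-regularity plus compactness. Parabolic compactness handles the conjugate heat kernels. Tightness comes from an \(H_n\)-center \(z_i\) together with Theorem~\ref{thm:Hn-tail}, once \(d_{g^i_{-\theta}}(x_i,z_i)\) is bounded.

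For that last step, which you flag as the main obstacle, the paper does not invoke a worldline comparison. Local convergence of the kernels gives \(m_0>0\) and \(C_0<\infty\) with \(\nu_i(B_{g^i_{-\theta}}(x_i,C_0))\ge m_0\) for large \(i\). The defining second-moment bound of an \(H_n\)-center then yields \(H_n\theta\ge m_0\,(d_{g^i_{-\theta}}(x_i,z_i)-C_0)_+^2\).

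Your appeal to ``Bamler's comparison along a regular worldline'' is not tied to a precise statement. It reduces to the same Chebyshev mechanism if you take the mass lower bound from the curvature control near \((x_i,0)\). For instance, Perelman's reduced-distance lower bound along the constant worldline, plus local parabolic estimates, gives \(K(x_i,0;\cdot,-\theta)\ge c\) on a fixed ball about \(x_i\). Obtained this way, the bound does not presuppose the identification of the limiting kernel, which the paper simply asserts from the limit flow. That independence is what your ordering buys, since you then use tightness to get total mass one.

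Two side remarks in your sketch need repair.

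In Step~1, the entropy-oscillation route does not give smallness on \(B_0(x_i,R)\). At scale one, the Lipschitz bound for the pointed Nash entropy in the base point costs about \(\sqrt{n/2}\,R\), which is not small. So you must rely on the large-neighborhood form of the \(\varepsilon\)-regularity theorem, as the paper does.

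In Step~2, total mass one and vanishing Nash entropy on static \(\mathbb R^n\) only force the limit to be a translate \(\gamma_\theta(\cdot-p)\), since every translate has the same entropy. To get \(p=0\) you must also use concentration at the basepoint as the time gap shrinks. One way is to run the same \(H_n\)-center bound at every scale \(\sigma\le\theta\) to obtain \(|p|\le C\sqrt{\sigma}\). Another is to use directly the \(\varepsilon\)-regularity closeness of the potential to \(|y|^2/4\tau\) that you mention.
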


\begin{proof}
    The smooth pointed convergence to the static Euclidean flow is the standard compactness consequence of Bamler's \(\varepsilon\)-regularity theorem, in the large-neighborhood form \cite[Theorems~10.2 and~10.3]{bamler2020entropy}. Indeed, when the pointed Nash entropy tends to its Euclidean value, Bamler's regularity theorem gives curvature control on larger and larger parabolic neighborhoods, and the compactness argument in its proof identifies the smooth pointed limit with flat Euclidean space.
    
    The conjugate heat kernels satisfy the conjugate heat equation. Under the smooth local convergence of the flows, parabolic compactness gives local smooth convergence of the heat-kernel densities. Since the limiting flow is the static Euclidean flow based at \((0,0)\), the limiting measure at time \(-\theta\) is \(d\gamma_\theta(y) = (4\pi\theta)^{-n/2}e^{-|y|^2/4\theta}\,dy.\)
    
    It remains to record tightness. Let \(z_i\) be an \(H_n\)-center for \((x_i,0)\) at time \(-\theta\). Then
    \begin{align*}
        \int_{M_i}d_{g^i_{-\theta}}(z_i,y)^2\,d\nu_i(y)
        \le
        H_n\theta.
    \end{align*}
    By the local convergence to the Euclidean heat kernel, there are constants \(m_0>0\) and \(C_0<\infty\), independent of \(i\), such that \(\nu_i\bigl(B_{g^i_{-\theta}}(x_i,C_0)\bigr)\ge m_0\) for all large \(i\). Hence
    \begin{align*}
        H_n\theta
        \ge
        m_0\bigl(d_{g^i_{-\theta}}(x_i,z_i)-C_0\bigr)_+^2.
    \end{align*}
    Thus the \(H_n\)-centers \(z_i\) remain at uniformly bounded \(g^i_{-\theta}\)-distance from \(x_i\).
    
    Now apply Theorem~\ref{thm:Hn-tail} at time \(-\theta\). For \(\rho\ge\sqrt{2H_n\theta}\),
    \begin{align*}
        \nu_i\bigl(M_i\setminus B_{g^i_{-\theta}}(z_i,\rho)\bigr)
        \le
        \Phi\left(
        -\frac{\rho-\sqrt{2H_n\theta}}{\sqrt{2\theta}}
        \right).
    \end{align*}
    Since \(z_i\) is uniformly close to \(x_i\), this gives
    \begin{align*}
        \lim_{R\to\infty}\limsup_{i\to\infty}
        \nu_i\bigl(M_i\setminus B_{g^i_{-\theta}}(x_i,R)\bigr)
        =
        0.
    \end{align*}
    On every fixed scale, the convergence maps identify Euclidean balls with \(g^i_{-\theta}\)-balls around \(x_i\), up to \(o(1)\)-errors. Hence \eqref{eq:entropy-regular-tightness} follows.
\end{proof}

The first consequence is a two-sided comparison with the Euclidean Gaussian profile, uniformly for volumes bounded away from \(0\) and \(1\).

\begin{theorem}[Gaussian-profile regularity]
\label{thm:profile-regularity}
    Fix \(n\ge2\), \(0<\theta<1\), \(0<a_0<1/2\), and \(\eta>0\). There exists \(\delta=\delta(n,\theta,a_0,\eta)>0\) with the following property. If
    \begin{align*}
        \CMcal N_{x,t}(r^2)\ge-\delta,
    \end{align*}
    then, for every \(a\in[a_0,1-a_0]\),
    \begin{align}
        \frac{1}{\sqrt{2\theta}\,r}I(a)
        \le
        \mathcal I_{\nu_\theta}(a)
        \le
        (1+\eta)\frac{1}{\sqrt{2\theta}\,r}I(a).
        \label{eq:two-sided-profile-regularity}
    \end{align}
\end{theorem}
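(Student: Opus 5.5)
The lower bound in \eqref{eq:two-sided-profile-regularity} is exactly \eqref{eq:universal-profile-lower}, so only the upper bound needs proof. I would argue by contradiction and compactness. First rescale parabolically: replacing \(g_t\) by \(r^{-2}g_{t+r^2\cdot}\) and translating time, we may assume \(r=1\) and \((x,t)=(x,0)\). Both sides of \eqref{eq:two-sided-profile-regularity} scale as \(r^{-1}\), and the pointed Nash entropy is scale invariant. Suppose the statement fails for some fixed \(n,\theta,a_0,\eta\). Then there are compact flows \((M_i,g^i)\), points \(x_i\), and volumes \(a_i\in[a_0,1-a_0]\) with \(\CMcal N^{(i)}_{x_i,0}(1)\ge -1/i\) and
\begin{align*}
    \mathcal I_{\nu_i}(a_i) > (1+\eta)\frac{1}{\sqrt{2\theta}}I(a_i).
\end{align*}
The Nash entropy is always \(\le 0\), so \(\CMcal N^{(i)}_{x_i,0}(1)\to0\). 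After passing to a subsequence, \(a_i\to a_\infty\in[a_0,1-a_0]\), and Proposition~\ref{prop:entropy-regular-Gaussian-convergence} applies.

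To contradict the displayed inequality, I would build competitor sets from Euclidean half-spaces transplanted by the maps \(\Psi_i\). Fix \(R\) large. For a level \(c\), consider the truncated half-space \(H_c\cap B(0,R)=\{y_1<c\}\cap B(0,R)\) and set \(E_i=\Psi_i(\{y_1<c_i\}\cap B(0,R))\). The local smooth convergence of \(\Psi_i^*g^i_{-\theta}\) and of the densities on \(B(0,R)\) then gives
\begin{align*}
    \nu_i(E_i)=\gamma_\theta(\{y_1<c_i\}\cap B(0,R))+o(1).
\end{align*}
The perimeter of \(E_i\) is the weighted area of the flat face \(\{y_1=c_i\}\cap B(0,R)\) plus the weighted area of the spherical cap \(\partial B(0,R)\cap\{y_1<c_i\}\), both computed in the pulled-back metric. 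The flat face contributes \(\frac1{\sqrt{2\theta}}\varphi(c_i/\sqrt{2\theta})+o(1)\). The spherical part is bounded by \(C(n,\theta)R^{n-1}e^{-R^2/4\theta}+o(1)\), which tends to zero as \(R\to\infty\). The volume defect \(\gamma_\theta(\mathbb R^n\setminus B(0,R))\) also tends to zero as \(R\to\infty\).

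The main obstacle is matching the volume exactly, since the profile is evaluated at the precise value \(a_i\). I would handle this by continuity. For fixed large \(R\) and large \(i\), the map \(c\mapsto \nu_i(\Psi_i(\{y_1<c\}\cap B(0,R)))\) is continuous and increasing, and it ranges from \(0\) up to \(\nu_i(\Psi_i(B(0,R)))\). By the tightness \eqref{eq:entropy-regular-tightness}, this upper value exceeds \(1-a_0/2\) once \(R\) is large and \(i\) is large. Hence there is a \(c_i\) with \(\nu_i(E_i)=a_i\) exactly. These \(c_i\) stay bounded, and they converge to \(\sqrt{2\theta}\,\Phi^{-1}(a_\infty)\) up to an error that vanishes as \(R\to\infty\). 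Combining the estimates,
\begin{align*}
    \limsup_{i}\mathcal I_{\nu_i}(a_i)\le \limsup_i \operatorname{Per}_{\nu_i}(E_i)\le \frac{1}{\sqrt{2\theta}}I(a_\infty)+\epsilon(R),
\end{align*}
where \(\epsilon(R)\to0\). Letting \(R\to\infty\) contradicts
\begin{align*}
    \liminf_i \mathcal I_{\nu_i}(a_i)\ge(1+\eta)\frac{1}{\sqrt{2\theta}}I(a_\infty),
\end{align*}
because \(I(a_\infty)\ge I(a_0)>0\). This lower bound on \(I(a_\infty)\) is the one place where the restriction \(a\in[a_0,1-a_0]\) is used.
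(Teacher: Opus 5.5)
Your proposal is correct and follows essentially the same route as the paper: rescale to \(r=1\), argue by contradiction, invoke Proposition~\ref{prop:entropy-regular-Gaussian-convergence}, and test the profile with transplanted truncated half-spaces \(\Psi_i(H_{c_i}\cap B(0,R))\) whose level \(c_i\) is tuned so the \(\nu_i\)-mass is exactly \(a_i\). The only difference is cosmetic: the paper first fixes \(F_R\) with \(\gamma_\theta(F_R)=a_\infty\) and perturbs \(c\) using the positive \(c\)-derivative, while you use the intermediate value theorem directly; tightness is not actually needed for that step, since local smooth convergence already gives \(\nu_i(\Psi_i(B(0,R)))\to\gamma_\theta(B(0,R))\).
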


\begin{proof}
    The lower bound is \eqref{eq:universal-profile-lower}. We prove the upper bound by contradiction. By parabolic scaling and time translation, assume \(r=1\) and \(t=0\).
    
    Suppose the upper bound fails. Then there are compact Ricci flows \((M_i,g^i_\tau)_{\tau\in[-1,0]}\), points \(x_i\in M_i\), and numbers \(a_i\in[a_0,1-a_0]\) such that
    \begin{align*}
        \CMcal N^{(i)}_{x_i,0}(1)\to0,
        \qquad
        \nu_i=\nu^{(i)}_{x_i,0;-\theta},
    \end{align*}
    but
    \begin{align}
        \mathcal I_{\nu_i}(a_i)
        >
        (1+\eta)\frac1{\sqrt{2\theta}}I(a_i).
        \label{eq:profile-counterexample}
    \end{align}
    After passing to a subsequence, assume \(a_i\to a_\infty\in[a_0,1-a_0].\) By Proposition~\ref{prop:entropy-regular-Gaussian-convergence}, the pointed flows and the measures \(\nu_i\) converge locally smoothly to the static Euclidean flow and the Gaussian measure \(\gamma_\theta\). Let \(H_c=\{y\in\mathbb R^n:y_1<c\}.\) Choose \(c_\infty\) so that \(\gamma_\theta(H_{c_\infty})=a_\infty.\) Then
    \begin{align*}
        \operatorname{Per}_{\gamma_\theta}(H_{c_\infty})
        =
        \frac1{\sqrt{2\theta}}I(a_\infty).
    \end{align*}
    Choose \(R\) large and \(c_R\) close to \(c_\infty\) so that the truncated half-space \(F_R=H_{c_R}\cap B(0,R)\) satisfies \(\gamma_\theta(F_R)=a_\infty\) and
    \begin{align}
        \operatorname{Per}_{\gamma_\theta}(F_R)
        \le
        \left(1+\frac\eta4\right)
        \frac1{\sqrt{2\theta}}I(a_\infty).
        \label{eq:compact-halfspace-competitor}
    \end{align}
    This is possible because the Gaussian mass of \(H_{c_\infty}\setminus B(0,R)\), the Gaussian perimeter outside large balls, and the spherical cutoff contribution from \(\partial B(0,R)\), all tend to zero as \(R\to\infty\).
    
    Let \(\Psi_i:B(0,R)\to M_i\) be the convergence maps at time \(-\theta\). For \(c\) close to \(c_R\), set \(E_{i,c}=\Psi_i(H_c\cap B(0,R)).\) The local smooth convergence of the metrics and densities gives \(\nu_i(E_{i,c}) \to \gamma_\theta(H_c\cap B(0,R))\) locally uniformly in \(c\). The corresponding weighted perimeters also converge locally uniformly in \(c\). Since \(c\longmapsto \gamma_\theta(H_c\cap B(0,R))\) has positive derivative near \(c_R\), for all large \(i\) we may choose \(c_i\to c_R\) such that \(\nu_i(E_{i,c_i})=a_i.\) Using \eqref{eq:compact-halfspace-competitor} and the continuity of \(I\) on \([a_0,1-a_0]\), we obtain, for all large \(i\),
    \begin{align*}
        \operatorname{Per}_{\nu_i}(E_{i,c_i})
        \le
        \left(1+\frac\eta2\right)
        \frac1{\sqrt{2\theta}}I(a_i).
    \end{align*}
    Thus
    \begin{align*}
        \mathcal I_{\nu_i}(a_i)
        \le
        \operatorname{Per}_{\nu_i}(E_{i,c_i})
        \le
        \left(1+\frac\eta2\right)
        \frac1{\sqrt{2\theta}}I(a_i),
    \end{align*}
    contradicting \eqref{eq:profile-counterexample}. This proves the upper bound.
\end{proof}

We also record the corresponding compactness-stability statement. In the entropy-regular regime, sets with small isoperimetric deficit are close, inside a large regularity chart carrying almost all of the heat-kernel mass, to Euclidean Gaussian half-spaces.

For a finite-perimeter set \(E\subset(M,g_{t-\theta r^2})\), define the scaled deficit
\begin{align}
    \delta_{\nu_\theta}(E)
    =
    \sqrt{2\theta}\,r\,\operatorname{Per}_{\nu_\theta}(E)
    -
    I(\nu_\theta(E)).
\end{align}
The chart in the next theorem is written in the parabolically rescaled coordinates. Equivalently, in the original metric its image has radius of order \(Rr\).

\begin{theorem}[Entropy-regular half-space stability]
\label{thm:entropy-regular-halfspace-stability}
    Fix \(n\ge2\), \(0<\theta<1\), \(0<a_0<1/2\), and \(\eta>0\). There exist
    \begin{align*}
        R=R(n,\theta,a_0,\eta)<\infty,
        \qquad
        \delta_N=\delta_N(n,\theta,a_0,\eta)>0,
        \qquad
        \delta_I=\delta_I(n,\theta,a_0,\eta)>0
    \end{align*}
    with the following property. Let \((M^n,g_t)\) be a compact Ricci flow, let \((x,t)\in M\times I\), and let \(r>0\) satisfy \([t-r^2,t]\subset I\). Assume
    \begin{align*}
        \CMcal N_{x,t}(r^2)\ge-\delta_N.
    \end{align*}
    If \(E\subset(M,g_{t-\theta r^2})\) is a finite-perimeter set with \(\nu_\theta(E)\in[a_0,1-a_0],\) \(\delta_{\nu_\theta}(E)\le\delta_I,\) then there are a smooth entropy-regularity embedding \(\Psi:B_{\mathbb R^n}(0,R)\longrightarrow M\) at time \(t-\theta r^2\), written in the rescaled metric \(r^{-2}g\), and a Euclidean half-space \(H\subset\mathbb R^n\), such that
    \begin{align}
        \begin{aligned}
            \nu_\theta(M\setminus\Psi(B(0,R)))&\le\eta,\\
            \nu_\theta\left(
            (E\cap\Psi(B(0,R)))
            \triangle
            \Psi(H\cap B(0,R))
            \right)
            &\le\eta.
        \end{aligned}
    \end{align}
\end{theorem}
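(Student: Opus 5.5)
The plan is to argue by contradiction and compactness, in the same way as Theorem~\ref{thm:profile-regularity}. After parabolic rescaling we assume \(r=1\), \(t=0\). Suppose the statement fails for some fixed \((n,\theta,a_0,\eta)\). Then for every choice of \(R\) there are compact flows \((M_i,g^i)\), points \(x_i\) with \(\CMcal N^{(i)}_{x_i,0}(1)\to0\), and finite-perimeter sets \(E_i\) such that \(\nu_i(E_i)\to a_\infty\in[a_0,1-a_0]\) and \(\delta_{\nu_i}(E_i)\to0\). For these sets, no embedding and half-space satisfy both conclusions.

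First fix \(R\). By the Gaussian tail of \(\gamma_\theta\) and the tightness statement \eqref{eq:entropy-regular-tightness} of Proposition~\ref{prop:entropy-regular-Gaussian-convergence}, choose \(R\) so that \(\limsup_i\nu_i(M_i\setminus\Psi_i(B(0,R/2)))\le\eta/4\). The first conclusion then holds for the convergence maps \(\Psi_i\) on \(B(0,R)\) once \(i\) is large. It remains to get the symmetric-difference bound.

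Next pull the sets back: set \(F_i=\Psi_i^{-1}(E_i)\cap B(0,R')\) for each fixed \(R'\le R\), and later diagonally in \(R'\to\infty\). Smooth convergence of metrics and densities on \(B(0,R')\) gives a uniform bound on the Euclidean perimeter of \(F_i\) inside \(B(0,R')\). This uses \(\operatorname{Per}_{\nu_i}(E_i)\le C\), which holds since the deficit is small and \(I\le\varphi(0)\). Also \(\gamma_\theta\) has density bounded below on compacts. BV compactness then gives, after a diagonal subsequence, \(F_i\to F\) in \(L^1_{\mathrm{loc}}(\mathbb R^n)\) for some \(F\subset\mathbb R^n\) of locally finite perimeter. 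Tightness gives \(\gamma_\theta(F)=a_\infty\): the mass of \(E_i\) outside the charts is uniformly small, and local smooth convergence of \(\Psi_i^*\nu_i\) to \(\gamma_\theta\) handles the inside.

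Lower semicontinuity of weighted perimeter under \(L^1_{\mathrm{loc}}\) convergence, together with smooth convergence of the weights, gives
\begin{align*}
    \operatorname{Per}_{\gamma_\theta}(F\cap B(0,R'))_{\text{interior}}
    \le
    \liminf_i\operatorname{Per}_{\nu_i}(E_i)
    =
    \frac1{\sqrt{2\theta}}I(a_\infty),
\end{align*}
where the left side is the perimeter relative to the open ball. Letting \(R'\to\infty\), \(\operatorname{Per}_{\gamma_\theta}(F)\le\frac1{\sqrt{2\theta}}I(\gamma_\theta(F))\). So \(F\) attains equality in the Euclidean Gaussian isoperimetric inequality \eqref{eq:euclidean-GI}. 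The equality case of Borell--Sudakov--Tsirelson (Carlen--Kerce, or Ehrhard) then says \(F\) is a half-space \(H\) up to a null set. Finally, \(L^1_{\mathrm{loc}}\) convergence on \(B(0,R)\) plus smooth convergence of the densities gives \(\nu_i((E_i\cap\Psi_i(B(0,R)))\triangle\Psi_i(H\cap B(0,R)))\to0\). This contradicts the assumption for large \(i\).

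The main obstacle is the perimeter step. We must make sure no perimeter or mass escapes in a way that spoils equality. Mass escape is controlled by tightness. For perimeter, lower semicontinuity only needs the local portions, so the difficulty is only that the limit could have \(\gamma_\theta(F)<a_\infty\); tightness excludes this. A secondary point is quoting the rigidity of the Gaussian isoperimetric inequality among finite-perimeter sets, which I would cite rather than reprove. The constants \(\delta_N,\delta_I\) come out non-explicitly from the contradiction argument.
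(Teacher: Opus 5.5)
Your proof is correct and follows the paper's argument. The shared steps are contradiction, smooth Gaussian convergence via Proposition~\ref{prop:entropy-regular-Gaussian-convergence}, local \(BV\)-compactness of the pulled-back sets, tightness to get \(\gamma_\theta(F)=a_\infty\), lower semicontinuity plus vanishing deficit to force equality, and the half-space equality case. The one structural difference is that you fix \(R=R(n,\theta,\eta)\) in advance from the Gaussian tail, which is legitimate because \(\nu_i(\Psi_i(B(0,R/2)))\to\gamma_\theta(B(0,R/2))\) for any such sequence, whereas the paper sends \(R_i\to\infty\) and enlarges from \(R_0\); just note that your auxiliary radii \(R'\) must exceed \(R\), so use the exhausting charts from the proposition and restrict to \(B(0,R)\) only at the end.
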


\begin{proof}
    We argue by contradiction. After parabolic scaling and time translation, take \(r=1\) and \(t=0\). Suppose the theorem is false. Then there are compact Ricci flows \((M_i,g^i_\tau)_{\tau\in[-1,0]}\), points \(x_i\in M_i\), numbers \(R_i\to\infty\), and finite-perimeter sets \(E_i\subset(M_i,g^i_{-\theta})\) such that
    \begin{align*}
        \CMcal N^{(i)}_{x_i,0}(1)\to0,
        \qquad
        \nu_i=\nu^{(i)}_{x_i,0;-\theta},\qquad 
        \nu_i(E_i)\in[a_0,1-a_0],
        \qquad
        \sqrt{2\theta}\operatorname{Per}_{\nu_i}(E_i)-I(\nu_i(E_i))\to0,
    \end{align*}
    but the conclusion fails on every entropy-regular convergence chart of radius \(R_i\) and for every Euclidean half-space.
    
    By Proposition~\ref{prop:entropy-regular-Gaussian-convergence}, after passing to a subsequence, the flows converge smoothly on compact subsets to the static Euclidean flow, and the measures \(\nu_i\) converge locally smoothly and tightly to \(d\gamma_\theta(y)=(4\pi\theta)^{-n/2}e^{-|y|^2/4\theta}\,dy.\) Using a diagonal choice of convergence maps, we may assume that \(\Psi_i:B_{\mathbb R^n}(0,R_i)\longrightarrow M_i\) are entropy-regular convergence charts at time \(-\theta\). The deficit bound gives a uniform weighted perimeter bound:
    \begin{align*}
        \operatorname{Per}_{\nu_i}(E_i)\le C(n,\theta,a_0).
    \end{align*}
    On each fixed compact subset of the convergence region, the pulled-back metrics and densities are smoothly comparable to the Euclidean metric and to the Gaussian density. Hence the pulled-back sets have uniform local \(BV\)-bounds. By compactness in \(BV_{\mathrm{loc}}\), after passing to a subsequence, the pulled-back sets converge locally in \(L^1_{\gamma_\theta}\) to a finite-perimeter set \(E_\infty\subset\mathbb R^n.\) The tightness of the measures, together with local \(L^1\)-convergence, gives
    \begin{align*}
        \gamma_\theta(E_\infty)
        =
        \lim_{i\to\infty}\nu_i(E_i)
        \in[a_0,1-a_0].
    \end{align*}
    By lower semicontinuity of weighted perimeter on compact subsets, and then exhausting \(\mathbb R^n\), we have
    \begin{align*}
        \operatorname{Per}_{\gamma_\theta}(E_\infty)
        \le
        \liminf_{i\to\infty}
        \operatorname{Per}_{\nu_i}(E_i).
    \end{align*}
    Using the vanishing deficits, we get
    \begin{align*}
        \sqrt{2\theta}\operatorname{Per}_{\gamma_\theta}(E_\infty)
        \le
        I(\gamma_\theta(E_\infty)).
    \end{align*}
    The reverse inequality is the classical Gaussian isoperimetric inequality. Thus equality holds, and by the equality case in Gaussian isoperimetry, \(E_\infty\) agrees, up to a \(\gamma_\theta\)-null set, with a Euclidean half-space. Call this half-space \(H\).
    
    Choose \(R_0<\infty\) so large that, by tightness,
    \begin{align*}
        \limsup_{i\to\infty}
        \nu_i(M_i\setminus\Psi_i(B(0,R_0)))<\frac{\eta}{10}.
    \end{align*}
    Since the pulled-back sets converge locally in \(L^1_{\gamma_\theta}\) to \(H\), we also have
    \begin{align*}
        \nu_i\left(
        (E_i\cap\Psi_i(B(0,R_0)))
        \triangle
        \Psi_i(H\cap B(0,R_0))
        \right)
        <
        \frac{\eta}{10}
    \end{align*}
    for all large \(i\). Since \(R_i\to\infty\), we have \(R_i\ge R_0\) for all large \(i\). Enlarging the chart from \(R_0\) to \(R_i\) changes the symmetric-difference estimate only inside \(\Psi_i(B(0,R_i)\setminus B(0,R_0)),\) whose \(\nu_i\)-mass is bounded by
    \begin{align*}
        \nu_i(M_i\setminus\Psi_i(B(0,R_0)))<\frac{\eta}{10}
    \end{align*}
    for all large \(i\). Therefore, for all large \(i\),
    \begin{align*}
        \nu_i(M_i\setminus\Psi_i(B(0,R_i)))\le\eta
    \end{align*}
    and
    \begin{align*}
        \nu_i\left(
        (E_i\cap\Psi_i(B(0,R_i)))
        \triangle
        \Psi_i(H\cap B(0,R_i))
        \right)
        \le\eta.
    \end{align*}
    This is exactly the conclusion that was assumed to fail, a contradiction. The theorem follows.
\end{proof}

\section{Log-Sobolev and Poincaré inequalities}
\label{sec:poincare}

We now turn the isoperimetric profile into functional inequalities. The coarea formula gives a first set of \(L^1\)-type consequences, while Gaussian rearrangement transfers the sharp one-dimensional Gaussian constants to the conjugate heat-kernel measure. In this way, the sharp profile gives a unified route to the Hein--Naber log-Sobolev inequality and to the \(L^p\)-Poincaré inequalities with the Gaussian model constants appearing in Bamler's theorem \cite[Theorem~11.1]{bamler2020entropy}.

Throughout this section, \(d\nu(y)=K(x_0,t_0;y,s)\,dg_s(y),\) \(\tau=t_0-s.\) Let \(\gamma_\tau\) denote the one-dimensional Gaussian measure \(N(0,2\tau)\). Its isoperimetric profile is \((2\tau)^{-1/2}I\).

\begin{proposition}[Coarea profile]
\label{prop:coarea-profile}
    For every locally Lipschitz function \(h:M\to\mathbb R\),
    \begin{align}
        \int_M |\nabla h|\,d\nu
        \ge
        \frac1{\sqrt{2\tau}}
        \int_{-\infty}^{\infty}
        I(\nu(\{h>r\}))\,dr.
        \label{eq:coarea-profile}
    \end{align}
\end{proposition}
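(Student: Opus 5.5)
The plan is to combine the coarea formula for the weighted measure \(\nu\) with Theorem~\ref{thm:intro-GI} applied to each superlevel set. Since \(K(x_0,t_0;\cdot,s)\) is smooth and positive on the closed manifold \((M,g_s)\), the coarea formula for a locally Lipschitz \(h\) gives
\begin{align*}
    \int_M |\nabla h|\,d\nu
    =
    \int_M |\nabla h|\,K(x_0,t_0;\cdot,s)\,dg_s
    =
    \int_{-\infty}^{\infty}
    \int_{h^{-1}(r)} K(x_0,t_0;\cdot,s)\,d\mathcal H^{n-1}_{g_s}\,dr.
\end{align*}
The first step is to identify the inner integral with \(\operatorname{Per}_\nu(\{h>r\})\) for almost every \(r\). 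I would use the BV form of coarea: for a.e. \(r\), the set \(\{h>r\}\) has finite \(g_s\)-perimeter, and its reduced boundary agrees with \(h^{-1}(r)\) up to an \(\mathcal H^{n-1}\)-null set. The paper has already noted that, for finite-perimeter sets, the relaxed perimeter equals the \(K\)-weighted \(\mathcal H^{n-1}\)-measure of \(\partial^*E\), so the inner integral is \(\operatorname{Per}_\nu(\{h>r\})\).

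An alternative avoids fine BV structure entirely. One can work directly with the relaxation definition and the coarea inequality \(\int|\nabla h|\,d\nu\ge\int\operatorname{Per}_\nu(\{h>r\})\,dr\), which follows from lower semicontinuity of the relaxed perimeter together with the standard coarea formula for smooth approximants \(h_j\). The level-set approximation is handled by Sard's theorem and Fatou's lemma. Only the inequality is needed, so this weaker form suffices.

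Next, apply Theorem~\ref{thm:intro-GI} to \(E=\{h>r\}\) for each \(r\). This gives \(\operatorname{Per}_\nu(\{h>r\})\ge(2\tau)^{-1/2}I(\nu(\{h>r\}))\). The map \(r\mapsto\nu(\{h>r\})\) is monotone, hence measurable, so the right side is integrable in the extended sense. Integrating in \(r\) yields \eqref{eq:coarea-profile}.

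The only delicate point is the measure-theoretic identification of the coarea slices with the relaxed perimeter, especially since \(h\) is merely locally Lipschitz. On a closed manifold, however, locally Lipschitz means Lipschitz, so \(h\in W^{1,\infty}\subset BV\), and the Fleming--Rishel coarea formula applies directly. I expect no further obstacle. If \(\int|\nabla h|\,d\nu=\infty\), the claim is trivial.
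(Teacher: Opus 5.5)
Your proposal is correct and is essentially the paper's proof: the weighted coarea formula gives \(\int_M|\nabla h|\,d\nu=\int_{-\infty}^{\infty}\operatorname{Per}_\nu(\{h>r\})\,dr\), and Theorem~\ref{thm:intro-GI} is then applied to each superlevel set. Your measure-theoretic justification fills in details the paper leaves implicit: either identifying \(h^{-1}(r)\) with \(\partial^*\{h>r\}\) up to an \(\mathcal H^{n-1}\)-null set for a.e. \(r\), or using only the coarea inequality via relaxation.
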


\begin{proof}
    The weighted coarea formula gives
    \begin{align*}
        \int_M|\nabla h|\,d\nu
        =
        \int_{-\infty}^{\infty}
        \operatorname{Per}_\nu(\{h>r\})\,dr.
    \end{align*}
    Applying Theorem~\ref{thm:intro-GI} to the superlevel sets gives the claim.
\end{proof}

The first consequence is a support-sensitive Poincaré inequality. It improves as the support becomes small, reflecting the growth of the Gaussian isoperimetric ratio \(I(a)/a\) near \(a=0\).

\begin{corollary}[Small-support Poincaré inequality]
\label{cor:small-support-poincare}
    Let \(u\ge0\) be locally Lipschitz and assume \(\nu(\{u>0\})\le a\le\frac12.\) Then, for every \(p\ge1\),
    \begin{align}
        \|u\|_{L^p(\nu)}
        \le
        p\sqrt{2\tau}\frac{a}{I(a)}
        \|\nabla u\|_{L^p(\nu)}.
        \label{eq:small-support-poincare}
    \end{align}
    In particular, if \(m\) is a median of a locally Lipschitz function \(h\), then
    \begin{align}
        \|h-m\|_{L^p(\nu)}
        \le
        p\sqrt{\pi\tau}\,\|\nabla h\|_{L^p(\nu)}.
        \label{eq:median-poincare}
    \end{align}
\end{corollary}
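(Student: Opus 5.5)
Take \(p=1\) first, then pass to general \(p\) by applying that case to \(u^p\), and finally get the median estimate from the small-support inequality with \(a=\tfrac12\).

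\emph{Case \(p=1\).} Apply Proposition~\ref{prop:coarea-profile} to \(u\). For \(r\ge0\), the superlevel set \(\{u>r\}\) lies inside \(\{u>0\}\), so its measure \(b=\nu(\{u>r\})\) satisfies \(b\le a\le\tfrac12\). The function \(b\mapsto I(b)/b\) is nonincreasing on \((0,1)\). One way to see this: \(I\) is concave with \(I(0)=0\). Another: its derivative has the sign of \(bI'(b)-I(b)=-b\Phi^{-1}(b)-\varphi(\Phi^{-1}(b))\), which is negative by the Mills-ratio bound \(x\Phi(x)+\varphi(x)>0\). Hence
\[
I(b)\ge \frac{I(a)}{a}\,b .
\]
For \(r<0\) the superlevel set is all of \(M\), and there \(I(1)=0\). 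Integrating over \(r\ge0\) gives
\[
\int_M|\nabla u|\,d\nu\ \ge\ \frac{I(a)}{a\sqrt{2\tau}}\int_0^\infty\nu(\{u>r\})\,dr\ =\ \frac{I(a)}{a\sqrt{2\tau}}\|u\|_{L^1(\nu)} ,
\]
which is the case \(p=1\).

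\emph{General \(p\ge1\).} The function \(u^p\) is locally Lipschitz, nonnegative, and has the same support as \(u\). Applying the \(p=1\) case to it, and using \(|\nabla u^p|=pu^{p-1}|\nabla u|\) together with Hölder's inequality with exponents \(p/(p-1)\) and \(p\), gives
\[
\|u\|_p^p\le \sqrt{2\tau}\frac{a}{I(a)}\,p\,\|u\|_p^{p-1}\|\nabla u\|_p .
\]
If \(\|u\|_p\) is finite and nonzero, dividing by \(\|u\|_p^{p-1}\) yields \eqref{eq:small-support-poincare}. If \(\|u\|_p\) might be infinite, truncate first: replace \(u\) by \(\min(u,N)\), apply the argument, and let \(N\to\infty\) by monotone convergence. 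This finiteness issue is the only technical step. On a closed manifold it is vacuous anyway.

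\emph{Median estimate.} Split \(h-m=(h-m)_+-(h-m)_-\). Each part is locally Lipschitz and nonnegative. Since \(m\) is a median, \(\nu(\{h>m\})\le\tfrac12\) and \(\nu(\{h<m\})\le\tfrac12\), so each part has support of measure at most \(a=\tfrac12\). With \(I(\tfrac12)=\varphi(0)=1/\sqrt{2\pi}\), the constant is
\[
p\sqrt{2\tau}\cdot\frac{1/2}{1/\sqrt{2\pi}}=p\sqrt{\pi\tau}.
\]
The two parts have disjoint supports, and their gradients are supported on \(\{h>m\}\) and \(\{h<m\}\) respectively, up to the null set where \(\nabla h=0\) a.e. on \(\{h=m\}\). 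Therefore
\[
\|h-m\|_p^p=\|(h-m)_+\|_p^p+\|(h-m)_-\|_p^p\le (p\sqrt{\pi\tau})^p\bigl(\|\nabla (h-m)_+\|_p^p+\|\nabla (h-m)_-\|_p^p\bigr)\le (p\sqrt{\pi\tau})^p\|\nabla h\|_p^p ,
\]
which is \eqref{eq:median-poincare}.
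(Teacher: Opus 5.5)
Your proposal is correct and follows the paper's proof essentially step for step. For \(p=1\) you use the coarea profile with layer-cake and the monotonicity of \(I(b)/b\); for general \(p\) you substitute \(u^p\) and apply Hölder; and for the median bound you split into \((h-m)_\pm\) with \(a=\tfrac12\). Your derivative check of the monotonicity and the truncation remark are harmless additions. You should, however, dismiss the degenerate case \(a=0\) (where \(u=0\) \(\nu\)-a.e. and \(a/I(a)\) is undefined) separately, as the paper does.
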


\begin{proof}
    If \(a=0\), then \(u=0\) \(\nu\)-a.e., and there is nothing to prove. Assume \(0<a\le1/2\). Since \(I\) is concave and \(I(0)=0\), the function \(b\mapsto I(b)/b\) is nonincreasing. Hence \(b\le \frac{a}{I(a)}I(b),\) \(0<b\le a.\) Applying this to the superlevel measures of \(u\), the layer-cake formula and Proposition~\ref{prop:coarea-profile} give
    \begin{align*}
        \int_M u\,d\nu
        =
        \int_0^\infty \nu(\{u>r\})\,dr
        \le
        \sqrt{2\tau}\frac{a}{I(a)}
        \int_M|\nabla u|\,d\nu.
    \end{align*}
    Apply this estimate to \(u^p\). Since \(|\nabla u^p|=pu^{p-1}|\nabla u|,\) Hölder inequality gives
    \begin{align*}
        \int_M u^p\,d\nu
        \le
        p\sqrt{2\tau}\frac{a}{I(a)}
        \|u\|_{L^p(\nu)}^{p-1}\|\nabla u\|_{L^p(\nu)}.
    \end{align*}
    This proves \eqref{eq:small-support-poincare}.
    
    For the median estimate, apply the result with \(a=1/2\) to \((h-m)_+\) and \((h-m)_-\), and use \(I(1/2)=\frac1{\sqrt{2\pi}}.\) Since
    \begin{align*}
        |\nabla(h-m)_+|^p+|\nabla(h-m)_-|^p=|\nabla h|^p
    \end{align*}
    a.e., this gives \eqref{eq:median-poincare}.
\end{proof}

We next use the profile to compare functions with their one-dimensional Gaussian rearrangements. This is the step that transfers sharp Gaussian functional inequalities to the conjugate heat-kernel measure. If \(h:M\to\mathbb R\) is measurable, let \(h^\circ:\mathbb R\to\mathbb R\) be the nonincreasing rearrangement of \(h\) with respect to \(\gamma_\tau\), chosen so that \(h\) and \(h^\circ\) are equimeasurable:
\begin{align*}
    \gamma_\tau(\{h^\circ>r\})=\nu(\{h>r\})
\end{align*}
for every continuity value \(r\) of the distribution function. Equivalently, all distributional quantities of \(h\) and \(h^\circ\) agree.

\begin{theorem}[Gaussian Pólya--Szegő inequality]
\label{thm:gaussian-polya-szego}
    For every locally Lipschitz \(h:M\to\mathbb R\) and every \(p\ge1\),
    \begin{align}
        \int_{\mathbb R}|(h^\circ)'|^p\,d\gamma_\tau
        \le
        \int_M|\nabla h|^p\,d\nu.
        \label{eq:gaussian-polya-szego}
    \end{align}
\end{theorem}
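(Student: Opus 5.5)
The plan is the classical coarea/rearrangement argument (Talenti, Ehrhard), with the isoperimetric input supplied by Theorem~\ref{thm:intro-GI} through the coarea formula. First I reduce to a nice class of functions. By truncation and approximation I may assume \(h\) is bounded and Lipschitz, and, after a small generic perturbation, that \(h\) is smooth with \(\nabla h\ne0\) a.e. on \(\{h=r\}\) for a.e.\ \(r\). The right-hand side of \eqref{eq:gaussian-polya-szego} is continuous under \(W^{1,p}(\nu)\)-approximation. The left-hand side is lower semicontinuous under convergence in distribution, by the standard one-dimensional lower semicontinuity of \(\int|g'|^p\,d\gamma_\tau\) under \(L^1_{\mathrm{loc}}\) convergence of monotone functions. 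So it suffices to treat such \(h\).

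Next I set \(\mu(r)=\nu(\{h>r\})\), which is nonincreasing. Since \(\nabla h\ne0\) a.e. on level sets, the coarea formula gives, for a.e.\ \(r\),
\begin{align*}
    -\mu'(r)\ge \int_{\{h=r\}}\frac{K}{|\nabla h|}\,d\mathcal H^{n-1},
    \qquad
    P(r)\coloneqq\int_{\{h=r\}}K\,d\mathcal H^{n-1}=\operatorname{Per}_\nu(\{h>r\})\ge \frac{1}{\sqrt{2\tau}}I(\mu(r)).
\end{align*}
Here the inequality for \(P(r)\) is Theorem~\ref{thm:intro-GI}. For the rearrangement side, write \(h^\circ\) on \(\mathbb R\) with level point \(x(r)\) determined by \(\gamma_\tau((-\infty,x(r)))=\mu(r)\). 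The one-dimensional Gaussian density at \(x(r)\) is \((2\tau)^{-1/2}I(\mu(r))\), which is the one-dimensional profile. Hence the one-dimensional analogue of the coarea formula reads \(|(h^\circ)'|=(2\tau)^{-1/2}I(\mu)/(-\mu')\) at the level \(r\), with \(\int|(h^\circ)'|^p\,d\gamma_\tau=\int\bigl((2\tau)^{-1/2}I(\mu(r))\bigr)^p(-\mu'(r))^{1-p}\,dr\). The absolutely continuous part is handled this way; jumps of \(\mu\) only correspond to flat parts of \(h^\circ\), and the singular part contributes nothing to \((h^\circ)'\).

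The core step is Hölder's inequality on each level set:
\begin{align*}
    P(r)=\int_{\{h=r\}}K\,d\mathcal H^{n-1}
    \le\Bigl(\int_{\{h=r\}}\frac{K}{|\nabla h|}\Bigr)^{\frac{p-1}{p}}\Bigl(\int_{\{h=r\}}K|\nabla h|^{p-1}\Bigr)^{\frac1p}.
\end{align*}
This yields
\begin{align*}
    \int_{\{h=r\}}K|\nabla h|^{p-1}\ge P(r)^p(-\mu'(r))^{1-p}\ge \bigl((2\tau)^{-1/2}I(\mu(r))\bigr)^p(-\mu'(r))^{1-p}.
\end{align*}
Integrating in \(r\) and using the coarea formula on the left gives \(\int_M|\nabla h|^p\,d\nu\) as the left side, which proves the claim. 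For \(p=1\) no Hölder step is needed: the claim is just Proposition~\ref{prop:coarea-profile} compared with equality in one dimension.

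I expect the main obstacle to be the measure-theoretic bookkeeping rather than the inequality itself. Critical levels of positive measure, where \(\{h=r\}\) has \(\nu\)-mass, produce jumps of \(\mu\). I also need to justify the one-dimensional formula for \(\int|(h^\circ)'|^p\) when \(\mu\) is only monotone, and to justify the approximation and lower semicontinuity that remove the genericity assumption. My first approach is to restrict to the set where \(\nabla h\ne0\); there Sard-type arguments for smooth \(h\) make a.e.\ level regular. I would then treat \(\{\nabla h=0\}\) separately: on that set the gradient integrand vanishes, and the corresponding rearranged set is where \(h^\circ\) is constant, so its contribution on both sides is nonnegative versus zero.
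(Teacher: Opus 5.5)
Your proposal is correct and follows essentially the same route as the paper: the coarea formula, H\"older's inequality on each level set, Theorem~\ref{thm:intro-GI} applied to the superlevel sets, and the one-dimensional coarea identity for the half-line superlevel sets of \(h^\circ\), followed by approximation. Your one-sided bound \(-\mu'(r)\ge\int_{\{h=r\}}K/|\nabla h|\,d\mathcal H^{n-1}\) points in the right direction because \(1-p\le0\), and together with your treatment of the approximation step it makes the bookkeeping more explicit than the paper, which asserts the identity for smooth \(h\) with regular level sets and defers the general case to the usual approximation argument.
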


\begin{proof}
    We first prove the result for smooth \(h\) with regular level sets. The general locally Lipschitz case follows by the usual approximation and truncation argument.
    
    Put \(\mu(r)=\nu(\{h>r\}).\) For a.e. \(r\), the weighted coarea formula gives
    \begin{align*}
        -\mu'(r)
        =
        \int_{\{h=r\}}\frac1{|\nabla h|}\,dA_\nu,
        \qquad
        \operatorname{Per}_\nu(\{h>r\})
        =
        \int_{\{h=r\}}1\,dA_\nu,
    \end{align*}
    where \(dA_\nu = K(x_0,t_0;\cdot,s)\,d\mathcal H^{n-1}_{g_s}.\) For \(p>1\), Hölder inequality on the level set \(\{h=r\}\) gives
    \begin{align*}
        \int_{\{h=r\}}|\nabla h|^{p-1}\,dA_\nu
        \ge
        \frac{\operatorname{Per}_\nu(\{h>r\})^p}{(-\mu'(r))^{p-1}}.
    \end{align*}
    Integrating in \(r\) and using Theorem~\ref{thm:intro-GI}, we obtain
    \begin{align}
        \int_M|\nabla h|^p\,d\nu
        \ge
        \int_{-\infty}^{\infty}
        \frac{\left((2\tau)^{-1/2}I(\mu(r))\right)^p}
        {(-\mu'(r))^{p-1}}\,dr.
        \label{eq:polya-intermediate}
    \end{align}
    
    For \(h^\circ\), the superlevel sets are Gaussian half-lines of measure \(\mu(r)\), and their Gaussian perimeter is exactly \((2\tau)^{-1/2}I(\mu(r)).\) The one-dimensional coarea identity therefore identifies the right-hand side of \eqref{eq:polya-intermediate} with \(\int_{\mathbb R}|(h^\circ)'|^p\,d\gamma_\tau.\) This proves the result for \(p>1\). The case \(p=1\) is the same argument without the Hölder step.
\end{proof}

As a first application of rearrangement, we obtain the sharp log-Sobolev inequality of Hein--Naber. The proof below is included to emphasize that the constant comes directly from the one-dimensional Gaussian model through the Pólya--Szegő inequality.

For a nonnegative integrable function \(w\), write
\begin{align*}
    \operatorname{Ent}_{\nu}(w)
    =
    \int_M w\log w\,d\nu
    -
    \left(\int_M w\,d\nu\right)
    \log\left(\int_M w\,d\nu\right).
\end{align*}

\begin{proposition}[{\cite[Theorem 1.10]{MR3245102}}]
\label{prop:GI-implies-HN-LSI}
    Let \(d\nu=K(x_0,t_0;\cdot,s)\,dg_s,\) \(\tau=t_0-s.\) Then every smooth function \(u:M\to\mathbb R\) satisfies
    \begin{align}
        \operatorname{Ent}_{\nu}(u^2)
        \le
        4\tau\int_M|\nabla u|_{g_s}^2\,d\nu.
        \label{eq:HN-LSI-u2}
    \end{align}
    Equivalently, every positive smooth function \(w\) satisfies
    \begin{align}
        \operatorname{Ent}_{\nu}(w)
        \le
        \tau\int_M\frac{|\nabla w|_{g_s}^2}{w}\,d\nu.
        \label{eq:HN-LSI-w}
    \end{align}
\end{proposition}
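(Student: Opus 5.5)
The plan is to transfer the one-dimensional Gaussian log-Sobolev inequality to \(\nu\) using the Gaussian Pólya--Szegő inequality, Theorem~\ref{thm:gaussian-polya-szego}, with \(p=2\).

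First I reduce to \(u\ge0\). Replacing \(u\) by \(|u|\) leaves \(u^2\) unchanged, so \(\operatorname{Ent}_\nu(u^2)\) is unchanged. The Dirichlet energy does not increase, since \(|\nabla|u||=|\nabla u|\) a.e. Theorem~\ref{thm:gaussian-polya-szego} is stated for locally Lipschitz functions, so \(|u|\) is admissible.

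Next let \(h=|u|\) and let \(h^\circ:\mathbb R\to\mathbb R\) be its nonincreasing \(\gamma_\tau\)-rearrangement, with \(\gamma_\tau=N(0,2\tau)\). The entropy depends only on the distribution of \(h^2\). Since \(h\) and \(h^\circ\) are equimeasurable, \((h^\circ)^2\) has the same distribution under \(\gamma_\tau\) as \(h^2\) under \(\nu\). Hence
\begin{align*}
    \operatorname{Ent}_\nu(h^2)=\operatorname{Ent}_{\gamma_\tau}\bigl((h^\circ)^2\bigr),
    \qquad
    \int_{\mathbb R}(h^\circ)^2\,d\gamma_\tau=\int_M h^2\,d\nu .
\end{align*}
The classical Gross log-Sobolev inequality for the one-dimensional Gaussian of variance \(2\tau\) reads \(\operatorname{Ent}_{\gamma_\tau}(g^2)\le 2\cdot(2\tau)\int|g'|^2\,d\gamma_\tau\), that is, constant \(4\tau\). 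Applying it to \(g=h^\circ\) and then Theorem~\ref{thm:gaussian-polya-szego} with \(p=2\) gives
\begin{align*}
    \operatorname{Ent}_\nu(u^2)
    \le
    4\tau\int_{\mathbb R}|(h^\circ)'|^2\,d\gamma_\tau
    \le
    4\tau\int_M|\nabla u|^2\,d\nu,
\end{align*}
which is \eqref{eq:HN-LSI-u2}.

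The main technical point is that \(h^\circ\) must be regular enough for Gross's inequality, e.g.\ locally absolutely continuous with \((h^\circ)'\in L^2(\gamma_\tau)\). If \(\int_M|\nabla u|^2\,d\nu<\infty\), which holds trivially on a closed manifold, the Pólya--Szegő bound gives finite energy. One checks that the monotone function \(h^\circ\) is absolutely continuous, since the profile lower bound rules out jumps corresponding to flat pieces of the distribution. If needed, I would instead first prove the inequality for smooth \(h\) with regular level sets, as in the proof of Theorem~\ref{thm:gaussian-polya-szego}, and then pass to the limit. Entropy and energy are both continuous under \(C^1\)-approximation on the compact manifold.

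Finally, the equivalent form \eqref{eq:HN-LSI-w} follows by substituting \(u=\sqrt w\) for positive smooth \(w\). Then \(|\nabla u|^2=|\nabla w|^2/(4w)\), and \(4\tau\int|\nabla u|^2\,d\nu=\tau\int|\nabla w|^2/w\,d\nu\). Conversely, \eqref{eq:HN-LSI-w} applied to \(w=u^2+\varepsilon\), with \(\varepsilon\downarrow0\), recovers \eqref{eq:HN-LSI-u2}.
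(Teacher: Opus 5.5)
Your proposal is correct and follows the paper's proof essentially step for step. Both reduce to \(u\ge0\), match entropies through equimeasurability of the Gaussian rearrangement, apply the one-dimensional Gross inequality for \(N(0,2\tau)\) with constant \(4\tau\) and Theorem~\ref{thm:gaussian-polya-szego} with \(p=2\), and finish with the substitution \(u=\sqrt w\); your remarks on the regularity of \(h^\circ\) and on the converse via \(w=u^2+\varepsilon\) supply details the paper leaves implicit.
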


\begin{proof}
    Since \(u^2=|u|^2\) and \(|\nabla |u||\le |\nabla u|\), it suffices to prove \eqref{eq:HN-LSI-u2} for \(u\ge0\).
    
    Let \(u^\circ:\mathbb R\to[0,\infty)\) be the monotone Gaussian rearrangement of \(u\) with respect to \(\gamma_\tau\). Since \(u\) and \(u^\circ\) are equimeasurable,
    \begin{align}
        \operatorname{Ent}_{\nu}(u^2)
        =
        \operatorname{Ent}_{\gamma_\tau}((u^\circ)^2).
        \label{eq:entropy-equimeasurable-short}
    \end{align}
    By Theorem~\ref{thm:gaussian-polya-szego}, applied with \(p=2\),
    \begin{align}
        \int_{\mathbb R}|(u^\circ)'|^2\,d\gamma_\tau
        \le
        \int_M|\nabla u|^2\,d\nu.
        \label{eq:polya-p2-for-LSI}
    \end{align}
    The sharp one-dimensional Gaussian log-Sobolev inequality for \(\gamma_\tau=N(0,2\tau)\) gives
    \begin{align*}
        \operatorname{Ent}_{\gamma_\tau}((u^\circ)^2)
        \le
        4\tau
        \int_{\mathbb R}|(u^\circ)'|^2\,d\gamma_\tau.
    \end{align*}
    Together with \eqref{eq:entropy-equimeasurable-short} and \eqref{eq:polya-p2-for-LSI}, this proves \eqref{eq:HN-LSI-u2}.
    
    Finally, if \(w>0\), apply \eqref{eq:HN-LSI-u2} to \(u=\sqrt w\). Since \(|\nabla\sqrt w|^2=\frac{|\nabla w|^2}{4w},\) we obtain
    \begin{align*}
        \operatorname{Ent}_{\nu}(w)
        \le
        4\tau\int_M|\nabla\sqrt w|^2\,d\nu
        =
        \tau\int_M\frac{|\nabla w|^2}{w}\,d\nu.
    \end{align*}
\end{proof}

We also record a local reverse form of the log-Sobolev inequality. Unlike the usual Hein--Naber log-Sobolev inequality, this estimate controls the endpoint gradient of the heat evolution by the entropy at the earlier time.

\begin{proposition}[Local reverse log-Sobolev inequality]
\label{prop:reverse-LSI}
    Fix \(x\in M\), \(s<t\), put \(\tau=t-s\), and let \(d\nu(y)=K(x,t;y,s)\,dg_s(y).\) Then every positive smooth function \(f\) satisfies
    \begin{align}
        \operatorname{Ent}_{\nu}(f)
        \ge
        \tau\,
        \frac{
        |\nabla_x P_{s,t}f|_{g_t}^2(x)
        }{
        P_{s,t}f(x)
        }.
        \label{eq:reverse-LSI}
    \end{align}
\end{proposition}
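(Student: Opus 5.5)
We would prove \eqref{eq:reverse-LSI} by a monotonicity argument in the style of Theorem~\ref{thm:Bobkov}, with Bobkov's functional replaced by the entropy density. We first reduce to the case \(\varepsilon\le f\le \varepsilon^{-1}\); this is automatic for a positive smooth function on a closed manifold. Set \(u(\cdot,r)=P_{s,r}f\) for \(r\in[s,t]\), so that \(\Box u=0\) and \(u>0\), and write \(\nu_r=\nu_{x,t;r}\). We consider
\begin{align*}
    \Psi(r)=\int_M \Bigl(u\log u+(t-r)\frac{|\nabla u|_{g_r}^2}{u}\Bigr)\,d\nu_r .
\end{align*}
At \(r=s\) the gradient term has coefficient \(\tau\). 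At \(r=t\) the coefficient vanishes and \(\nu_t=\delta_x\), so \(\Psi(t)=P_{s,t}f(x)\log P_{s,t}f(x)\). Here we use that \(\int_M u\,d\nu_r=P_{s,t}f(x)\) for every \(r\). It is cleaner, however, to aim directly at the sharper quantity
\begin{align*}
    \Lambda(r)=\int_M u\log u\,d\nu_r+(t-r)\frac{|\nabla u|^2}{u}(x,t)\cdot 0 ,
\end{align*}
and we proceed as follows.

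\textbf{Step 1.} We compute \(\Box(u\log u)=-|\nabla u|^2/u\), which follows from \(\Box u=0\). Since \(\frac{d}{dr}\int h\,d\nu_r=\int\Box h\,d\nu_r\), this gives
\begin{align*}
    \operatorname{Ent}_\nu(f)=\int_s^t\int_M\frac{|\nabla u|^2}{u}\,d\nu_r\,dr .
\end{align*}

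\textbf{Step 2.} We show that \(r\mapsto \int_M \frac{|\nabla u|^2}{u}\,d\nu_r\) is nondecreasing in \(r\), or at least that it dominates its endpoint value pointwise in the sense needed below. Using \eqref{eq:lovely-bochner}, the standard computation gives
\begin{align*}
    \Box\frac{|\nabla u|^2}{u}=-\frac{2}{u}\Bigl|\nabla^2u-\frac{\nabla u\otimes\nabla u}{u}\Bigr|^2\le0 .
\end{align*}
This is the Ricci-flow analogue of the \(\Gamma_2\) computation for the Fisher information density, and it yields the desired monotonicity.

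\textbf{Step 3.} By Step 2, for each \(r\in[s,t]\),
\begin{align*}
    \int_M\frac{|\nabla u|^2}{u}\,d\nu_r\ \ge\ \lim_{r'\uparrow t}\int_M\frac{|\nabla u|^2}{u}\,d\nu_{r'}=\frac{|\nabla P_{s,t}f|^2_{g_t}}{P_{s,t}f}(x),
\end{align*}
since \(\nu_{r'}\to\delta_x\) and \(u\) is smooth up to time \(t\). Integrating this bound over \([s,t]\) and inserting it into Step 1 gives \(\operatorname{Ent}_\nu(f)\ge\tau\,|\nabla P_{s,t}f|^2(x)/P_{s,t}f(x)\), which is \eqref{eq:reverse-LSI}.

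The main obstacle is the pointwise identity in Step 2. One must expand \(\Box(|\nabla u|^2u^{-1})\) using \(\Box|\nabla u|^2=-2|\nabla^2u|^2\) together with the cross terms \(\nabla|\nabla u|^2\cdot\nabla u^{-1}\), and verify that they complete the square. Our first attempt would be to write \(u=e^{\phi}\), so that \(|\nabla u|^2/u=u|\nabla\phi|^2\), and to use \(\Box\phi=|\nabla\phi|^2\); the expression should then reduce to \(-2u|\nabla^2\phi|^2\). The justification of differentiating under the integral, including the endpoint limit at \(r=t\), is routine on a closed manifold because \(u\) is bounded below.
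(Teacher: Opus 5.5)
Your proof is correct. It shares the paper's first step, the entropy identity $\operatorname{Ent}_\nu(f)=\int_s^t\int_M \frac{|\nabla u|^2}{u}\,d\nu_r\,dr$, but it obtains the per-time lower bound through a different key lemma.

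The paper bounds $P_{r,t}(|\nabla u|^2/u)(x)$ from below using three ingredients: Cauchy--Schwarz against $\nu_{x,t;r}$, the identity $P_{r,t}u=P_{s,t}f$, and the $L^1$ gradient contraction \eqref{eq:gradient-contraction}. The paper extracts that contraction from the Bobkov monotonicity by letting $\lambda\to\infty$.

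You instead show that the Fisher-information density is itself a supersolution. The computation you flagged as the main obstacle does close. Using $\Box(1/u)=-2|\nabla u|^2/u^3$ and $\nabla|\nabla u|^2=2\nabla^2u(\nabla u,\cdot)$, one gets
\[
\Box\frac{|\nabla u|^2}{u}=-\frac{2|\nabla^2u|^2}{u}+\frac{4\nabla^2u(\nabla u,\nabla u)}{u^2}-\frac{2|\nabla u|^4}{u^3}=-2u\,|\nabla^2\log u|^2 .
\]

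The two routes buy different things. The paper's needs no new computation, and it exhibits the reverse log-Sobolev inequality as a consequence of the stronger $L^1$ contraction. Yours is self-contained, using only \eqref{eq:lovely-bochner}. After Fubini it also gives the exact defect
\[
\operatorname{Ent}_\nu(f)-\tau\frac{|\nabla P_{s,t}f|_{g_t}^2}{P_{s,t}f}(x)=2\int_s^t(r-s)\int_M u\,|\nabla^2\log u|^2\,d\nu_r\,dr .
\]
By the argument of Corollary~\ref{cor:Bobkov-equality}, this shows that equality on a closed connected $M$ forces $f$ to be constant.

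Two cleanups are needed. First, since $\Box h\le0$ and $\frac{d}{dr}\int_M h\,d\nu_r=\int_M\Box h\,d\nu_r$, the map $r\mapsto\int_M\frac{|\nabla u|^2}{u}\,d\nu_r$ is non\emph{increasing}, not nondecreasing as Step~2 says; Step~3 already uses it correctly. Second, the functionals $\Psi$ and $\Lambda$ in your preamble play no role and should be deleted; $\Lambda$, with its factor $\cdot\,0$, is not meaningful.
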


\begin{proof}
    Let \(u(\cdot,r)=P_{s,r}f\), so that \((\partial_r-\Delta_{g_r})u=0.\) For \(r\in[s,t]\), write \(d\nu_r(y)=K(x,t;y,r)\,dg_r(y).\) Using the adjoint identity with \(h=u\log u\), we get
    \begin{align*}
        \frac{d}{dr}\int_M u\log u\,d\nu_r
        =
        \int_M(\partial_r-\Delta_{g_r})(u\log u)\,d\nu_r
        =
        -\int_M\frac{|\nabla u|_{g_r}^2}{u}\,d\nu_r.
    \end{align*}
    Since \(\nu_s=\nu\) and \(\nu_t=\delta_x\), integration from \(s\) to \(t\) gives
    \begin{align}
        \operatorname{Ent}_{\nu}(f)
        =
        \int_s^t
        P_{r,t}\left(\frac{|\nabla u|_{g_r}^2}{u}\right)(x)\,dr.
        \label{eq:entropy-identity-reverse-LSI}
    \end{align}
    Set \(a=P_{s,t}f(x).\) For every \(r\in[s,t]\), the semigroup property gives \(P_{r,t}u(x)=a.\) By Cauchy inequality with respect to \(\nu_{x,t;r}\),
    \begin{align*}
        P_{r,t}(|\nabla u|)(x)^2
        \le
        P_{r,t}u(x)\,
        P_{r,t}\left(\frac{|\nabla u|^2}{u}\right)(x)
        =
        a\,
        P_{r,t}\left(\frac{|\nabla u|^2}{u}\right)(x).
    \end{align*}
    On the other hand, the gradient contraction \eqref{eq:gradient-contraction}, applied from time \(r\) to time \(t\), gives
    \begin{align*}
        |\nabla_xP_{r,t}u|_{g_t}(x)
        \le
        P_{r,t}(|\nabla u|)(x).
    \end{align*}
    Since \(P_{r,t}u=P_{s,t}f\), we obtain
    \begin{align*}
        P_{r,t}\left(\frac{|\nabla u|^2}{u}\right)(x)
        \ge
        \frac{
        |\nabla_xP_{s,t}f|_{g_t}^2(x)
        }{
        P_{s,t}f(x)
        }.
    \end{align*}
    Substituting this into \eqref{eq:entropy-identity-reverse-LSI} gives \eqref{eq:reverse-LSI}.
\end{proof}

Let \(\gamma\) be the standard one-dimensional Gaussian measure. For \(p\ge1\), define
\begin{align}
    \Lambda_p
    =
    \sup_{g\not\equiv{\rm const}}
    \frac{
    \left\|g-\int g\,d\gamma\right\|_{L^p(\gamma)}
    }{
    \|g'\|_{L^p(\gamma)}
    }.
\end{align}
Thus \(\Lambda_p\) is the sharp Gaussian \(L^p\)-Poincaré constant on the line. We write
\begin{align*}
    \nu h=\int_M h\,d\nu.
\end{align*}

The same rearrangement argument gives the \(L^p\)-Poincaré inequalities with the sharp one-dimensional Gaussian model constants \cite[Theorem~11.1]{bamler2020entropy}.

\begin{theorem}[Gaussian-model \(L^p\)-Poincaré inequality]
\label{thm:sharp-poincare}
    For every locally Lipschitz \(h:M\to\mathbb R\) and every \(p\ge1\),
    \begin{align}
        \|h-\nu h\|_{L^p(\nu)}
        \le
        \sqrt{2\tau}\,\Lambda_p\,\|\nabla h\|_{L^p(\nu)}.
        \label{eq:sharp-poincare}
    \end{align}
    The constant is sharp as a universal Ricci-flow inequality.
\end{theorem}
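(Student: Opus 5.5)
The plan is to reduce to the one-dimensional Gaussian model through the rearrangement $h^\circ$ and Theorem~\ref{thm:gaussian-polya-szego}, then use Euclidean sharpness for the constant.

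\textbf{Reduction to the line.} Fix a locally Lipschitz $h$ with $\nabla h\in L^p(\nu)$ (otherwise there is nothing to prove). Let $h^\circ:\mathbb R\to\mathbb R$ be its nonincreasing Gaussian rearrangement with respect to $\gamma_\tau=N(0,2\tau)$. Since $h$ and $h^\circ$ are equimeasurable, every distributional quantity agrees. In particular $\nu h=\int h^\circ\,d\gamma_\tau$, and, applying equimeasurability to the function $r\mapsto|r-c|^p$ with $c=\nu h$,
\begin{align*}
    \|h-\nu h\|_{L^p(\nu)}
    =
    \Bigl\|h^\circ-\int h^\circ\,d\gamma_\tau\Bigr\|_{L^p(\gamma_\tau)}.
\end{align*}
Theorem~\ref{thm:gaussian-polya-szego} gives $\|(h^\circ)'\|_{L^p(\gamma_\tau)}\le\|\nabla h\|_{L^p(\nu)}$. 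It therefore suffices to prove the one-dimensional inequality for $\gamma_\tau$ with constant $\sqrt{2\tau}\Lambda_p$. Write $g(z)=h^\circ(\sqrt{2\tau}z)$, so $g'(z)=\sqrt{2\tau}(h^\circ)'(\sqrt{2\tau}z)$ and the pushforward of $\gamma_\tau$ under $x\mapsto x/\sqrt{2\tau}$ is $\gamma$. The definition of $\Lambda_p$ then yields
\begin{align*}
    \Bigl\|h^\circ-\int h^\circ d\gamma_\tau\Bigr\|_{L^p(\gamma_\tau)}
    \le
    \sqrt{2\tau}\,\Lambda_p\,\|(h^\circ)'\|_{L^p(\gamma_\tau)}.
\end{align*}

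\textbf{Technical step.} The one technical point is that $h^\circ$ must be admissible in the definition of $\Lambda_p$. It has to be locally absolutely continuous with $(h^\circ)'\in L^p(\gamma_\tau)$. I would handle this as the proof of Theorem~\ref{thm:gaussian-polya-szego} does. First treat bounded smooth $h$ with regular level sets. For these, $h^\circ$ is monotone and absolutely continuous, since the coarea identity gives it a derivative controlled by the profile bound and it has no jumps when $\mu$ is continuous. Then pass to general locally Lipschitz $h$ by truncation $h\mapsto \max(-N,\min(h,N))$ and smooth approximation. Both sides of the inequality converge by dominated and monotone convergence. I expect this to be the main obstacle, though a routine one: making sure jumps of $h^\circ$ (from plateaus of $h$) do not occur. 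If they did, $g'$ would miss a singular part. On a plateau of $h$ the distribution function $\mu$ jumps, but this produces a flat piece of $h^\circ$, not a jump. Jumps of $h^\circ$ come from gaps in the range of $h$, which cannot occur for continuous $h$ on connected $M$. So $h^\circ$ is continuous, and the coarea bound makes it locally absolutely continuous.

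\textbf{Sharpness.} Take the static Euclidean flow on $\mathbb R^n$, where $\nu=\gamma_\tau^{\otimes n}$ with covariance $2\tau\operatorname{Id}$. Choose $h(y)=g(y_1/\sqrt{2\tau})$ for near-extremal $g$ in the definition of $\Lambda_p$, approximated by smooth compactly supported-derivative functions if needed. Both sides then reduce exactly to the one-dimensional ratio times $\sqrt{2\tau}$. Hence no smaller universal constant works.
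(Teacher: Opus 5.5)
Your proposal is correct and follows the paper's proof: equimeasurability transfers $\|h-\nu h\|_{L^p(\nu)}$ to $h^\circ$, the one-dimensional inequality is scaled from $N(0,1)$ to $N(0,2\tau)$, Theorem~\ref{thm:gaussian-polya-szego} controls $\|(h^\circ)'\|_{L^p(\gamma_\tau)}$, and one-coordinate functions in the static Euclidean flow give sharpness. Your admissibility discussion goes beyond what the paper writes, and an a.e.\ derivative bound alone would not rule out a Cantor-type singular part; the point is settled more directly by applying Corollary~\ref{cor:enlargement} to $\{h>r\}$, which shows $h^\circ$ is Lipschitz with the same constant as $h$, and since $M$ is closed no truncation is needed.
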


\begin{proof}
    By equimeasurability,
    \begin{align*}
        \|h-\nu h\|_{L^p(\nu)}
        =
        \|h^\circ-\gamma_\tau h^\circ\|_{L^p(\gamma_\tau)}.
    \end{align*}
    Scaling the sharp one-dimensional inequality from \(N(0,1)\) to \(N(0,2\tau)\) gives
    \begin{align*}
        \|g-\gamma_\tau g\|_{L^p(\gamma_\tau)}
        \le
        \sqrt{2\tau}\,\Lambda_p\|g'\|_{L^p(\gamma_\tau)}.
    \end{align*}
    Apply this to \(g=h^\circ\), and then use Theorem~\ref{thm:gaussian-polya-szego}.
    
    Sharpness follows from the static Euclidean flow. There \(\nu\) is a Gaussian measure with covariance \(2\tau\operatorname{Id}_n\), and functions depending on one Euclidean coordinate reduce the inequality to the one-dimensional model defining \(\Lambda_p\).
\end{proof}

\begin{remark}
    For the Gaussian line, \(\Lambda_1=\sqrt{\frac{\pi}{2}},\) \(\Lambda_2=1.\) Hence Theorem~\ref{thm:sharp-poincare} gives
    \begin{align*}
        \|h-\nu h\|_{L^1(\nu)} \le \sqrt{\pi\tau}\,\|\nabla h\|_{L^1(\nu)}, \qquad \int_M |h-\nu h|^2\,d\nu \le 2\tau\int_M|\nabla h|^2\,d\nu.
    \end{align*}
    These are the sharp \(p=1\) and \(p=2\) constants in Bamler's normalization. In general, the optimal universal Ricci-flow constant is exactly the one-dimensional Gaussian model constant.
\end{remark}

\subsection{Cheeger and Faber--Krahn estimates}

We finish the section with two elementary consequences of the same profile: a weighted Cheeger lower bound and a support-sensitive Faber--Krahn estimate.

\begin{corollary}[Weighted Cheeger constant]
\label{cor:weighted-cheeger}
    Let
    \begin{align*}
        h_\nu
        =
        \inf_{0<\nu(E)<1}
        \frac{\operatorname{Per}_\nu(E)}
        {\min\{\nu(E),1-\nu(E)\}}.
    \end{align*}
    Then \(h_\nu\ge \frac1{\sqrt{\pi\tau}}.\)
\end{corollary}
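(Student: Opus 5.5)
The plan is to reduce the Cheeger bound to Theorem~\ref{thm:intro-GI} together with an elementary inequality for the Gaussian profile. Fix a Borel set \(E\) with \(0<\nu(E)<1\) and set \(a=\nu(E)\). Theorem~\ref{thm:intro-GI} gives
\begin{align*}
    \operatorname{Per}_\nu(E)\ge\frac1{\sqrt{2\tau}}I(a).
\end{align*}
So it suffices to show
\begin{align*}
    I(a)\ge\sqrt{\tfrac{2}{\pi}}\,\min\{a,1-a\}\qquad\text{for }0\le a\le1,
\end{align*}
since then \(\frac{1}{\sqrt{2\tau}}\sqrt{\frac2\pi}=\frac1{\sqrt{\pi\tau}}\). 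Dividing by \(\min\{a,1-a\}\) and taking the infimum over \(E\) then gives \(h_\nu\ge\frac1{\sqrt{\pi\tau}}\).

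For the profile inequality, I would use the symmetry \(I(a)=I(1-a)\), which follows from \(\Phi^{-1}(1-a)=-\Phi^{-1}(a)\) and the evenness of \(\varphi\). This reduces matters to \(0<a\le1/2\). There I would reuse the observation already made in the proof of Corollary~\ref{cor:small-support-poincare}. Since \(I\) is concave, by \(II''=-1\) and \(I>0\), and since \(I(0)=0\), the ratio \(b\mapsto I(b)/b\) is nonincreasing. Hence on \((0,1/2]\) it is bounded below by its value at \(1/2\), namely
\begin{align*}
    \frac{I(1/2)}{1/2}=\frac{2}{\sqrt{2\pi}}=\sqrt{\tfrac2\pi}.
\end{align*}
This gives \(I(a)\ge\sqrt{2/\pi}\,a\) for \(a\le1/2\), and the symmetric bound for \(a\ge1/2\).

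There is no real obstacle here; the only point needing care is the arithmetic of the constant, \(\frac{1}{\sqrt{2\tau}}\cdot\frac{2}{\sqrt{2\pi}}=\frac1{\sqrt{\pi\tau}}\). I would also note in passing that the bound is consistent with the \(p=1\) median Poincaré inequality in Corollary~\ref{cor:small-support-poincare}, and that it is sharp in the Euclidean model, where a half-space of measure \(1/2\) attains \(I(1/2)/(1/2)\).
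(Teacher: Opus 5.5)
Your proposal is correct and follows essentially the same route as the paper. The paper also combines Theorem~\ref{thm:intro-GI} with the symmetry reduction to \(a\le1/2\), and uses that \(I(a)/a\) is nonincreasing (by concavity and \(I(0)=0\)) to bound it below by \(I(1/2)/(1/2)=\sqrt{2/\pi}\), which gives the constant \(1/\sqrt{\pi\tau}\).
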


\begin{proof}
    By symmetry, consider \(a=\nu(E)\le1/2\). Theorem~\ref{thm:intro-GI} gives
    \begin{align*}
        \frac{\operatorname{Per}_\nu(E)}{\nu(E)}
        \ge
        \frac1{\sqrt{2\tau}}\frac{I(a)}{a}.
    \end{align*}
    Since \(I\) is concave and \(I(0)=0\), the ratio \(I(a)/a\) is decreasing. Thus
    \begin{align*}
        \frac{I(a)}{a}
        \ge
        \frac{I(1/2)}{1/2}
        =
        \sqrt{\frac2\pi}.
    \end{align*}
    This proves the estimate.
\end{proof}

\begin{corollary}[Weighted Faber--Krahn estimate]
\label{cor:weighted-faber-krahn}
    Let \(\Omega\subset M\) be open with \(\nu(\Omega)=a\le1/2\), and define
    \begin{align*}
        \lambda_1^\nu(\Omega)
        =
        \inf_{\phi\in C_c^\infty(\Omega),\,\phi\not\equiv0}
        \frac{\int_\Omega |\nabla\phi|^2\,d\nu}
        {\int_\Omega \phi^2\,d\nu}.
    \end{align*}
    Then
    \begin{align}
        \lambda_1^\nu(\Omega)
        \ge
        \frac1{8\tau}\left(\frac{I(a)}{a}\right)^2.
        \label{eq:weighted-faber-krahn}
    \end{align}
    In particular,
    \begin{align*}
        \lambda_1^\nu(\Omega)
        \ge
        \frac{\log(1/a)+o(\log(1/a))}{4\tau}
        \qquad\text{as }a\downarrow0.
    \end{align*}
\end{corollary}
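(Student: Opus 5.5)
The plan is to pass from the isoperimetric profile to an \(L^1\)-Sobolev bound for functions supported in \(\Omega\), and then to the \(L^2\) Rayleigh quotient by the standard squaring trick, just as in the proof of Corollary~\ref{cor:small-support-poincare}.

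Fix \(\phi\in C_c^\infty(\Omega)\), \(\phi\not\equiv0\). Since \(|\nabla|\phi||\le|\nabla\phi|\) a.e., we may replace \(\phi\) by \(|\phi|\) and assume \(\phi\ge0\). Its support lies in \(\Omega\), so \(\nu(\{\phi>0\})\le a\le1/2\).

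We apply Corollary~\ref{cor:small-support-poincare} with \(p=2\) to \(u=\phi\). This gives
\begin{align*}
    \|\phi\|_{L^2(\nu)}\le 2\sqrt{2\tau}\,\frac{a}{I(a)}\,\|\nabla\phi\|_{L^2(\nu)}.
\end{align*}
Squaring,
\begin{align*}
    \int_\Omega|\nabla\phi|^2\,d\nu\ge\frac{1}{8\tau}\Bigl(\frac{I(a)}{a}\Bigr)^2\int_\Omega\phi^2\,d\nu.
\end{align*}
Taking the infimum over \(\phi\) yields \eqref{eq:weighted-faber-krahn}.

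For a self-contained check of the constant, one can retrace the argument directly. By the layer-cake formula, monotonicity of \(b\mapsto I(b)/b\), and Proposition~\ref{prop:coarea-profile} applied to \(\phi^2\),
\begin{align*}
    \int\phi^2\,d\nu\le\sqrt{2\tau}\,\frac{a}{I(a)}\int 2\phi|\nabla\phi|\,d\nu.
\end{align*}
Cauchy--Schwarz then gives \(\|\phi\|_2\le 2\sqrt{2\tau}(a/I(a))\|\nabla\phi\|_2\). The constant \(1/(8\tau)\) comes from \((2\sqrt{2\tau})^2=8\tau\).

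For the asymptotic statement, we use the standard Gaussian tail asymptotics. As \(a\downarrow0\), set \(x=-\Phi^{-1}(a)\to\infty\). Mills' ratio gives \(I(a)=\varphi(x)\sim x\,a\), so \(I(a)/a\sim x\). Moreover \(x^2=2\log(1/a)(1+o(1))\), because \(\log a=-x^2/2-\log x+O(1)\). Hence
\begin{align*}
    \frac{1}{8\tau}\Bigl(\frac{I(a)}{a}\Bigr)^2=\frac{2\log(1/a)(1+o(1))}{8\tau}=\frac{\log(1/a)+o(\log(1/a))}{4\tau}.
\end{align*}

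The main point needing care is this asymptotic step. One must verify \(I(a)/a\sim\sqrt{2\log(1/a)}\) using the two-sided Mills bounds \(\frac{x}{1+x^2}\varphi(x)\le1-\Phi(x)\le\varphi(x)/x\). The rest is a direct specialization of earlier results.
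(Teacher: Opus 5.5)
Your proposal is correct and is essentially the paper's argument. The paper shows that the Dirichlet--Cheeger constant of \(\Omega\) is at least \(\frac{1}{\sqrt{2\tau}}\frac{I(a)}{a}\) and then invokes Cheeger's inequality \(\lambda_1^\nu(\Omega)\ge h^2/4\); your application of Corollary~\ref{cor:small-support-poincare} with \(p=2\) is exactly the coarea/layer-cake plus squaring and Cauchy--Schwarz proof of that Cheeger inequality, so it gives the same constant \(1/(8\tau)\), and your Mills-ratio computation just verifies the asymptotic \(I(a)\sim a\sqrt{2\log(1/a)}\) that the paper quotes.
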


\begin{proof}
    For every finite-perimeter set \(E\subset\Omega\), we have \(\nu(E)\le a\). Since \(a\le1/2\) and \(I(b)/b\) is decreasing,
    \begin{align*}
        \frac{\operatorname{Per}_\nu(E)}{\nu(E)}
        \ge
        \frac1{\sqrt{2\tau}}\frac{I(a)}{a}.
    \end{align*}
    Thus the Dirichlet--Cheeger constant of \(\Omega\) is at least \(\frac1{\sqrt{2\tau}}\frac{I(a)}{a}.\) Cheeger's inequality gives
    \begin{align*}
        \lambda_1^\nu(\Omega)
        \ge
        \frac14
        \left(
        \frac1{\sqrt{2\tau}}\frac{I(a)}{a}
        \right)^2
        =
        \frac1{8\tau}
        \left(\frac{I(a)}{a}\right)^2.
    \end{align*}
    The final asymptotic follows from \(I(a)\sim a\sqrt{2\log(1/a)}\) as \(a\downarrow0.\)
\end{proof}

\section{Reverse hypercontractivity}
\label{sec:reverse-hypercontractivity}

Fix a spacetime point \((x_0,t_0)\). For \(t<t_0\), write \(d\nu_t(y)=K(x_0,t_0;y,t)\,dg_t(y).\) Bamler's hypercontractivity theorem treats the range \(1<q\le p<\infty\) \cite[Theorem 12.1]{bamler2020entropy}. We record here the corresponding reverse inequality in the range \(0<p\le q<1\). The proof is the usual argument due to Gross \cite{gross-1975-log-sobolev}, with the sign reversed because the exponent is below one.

We shall use the following standard differential identity. If \(u>0\) solves \((\partial_t-\Delta_{g_t})u=0\) and \(\alpha(t)>0\), then
\begin{align}\label{eq:gross-identity}
\begin{aligned}
    \frac{d}{dt}\log \|u(\cdot,t)\|_{L^{\alpha(t)}(\nu_t)}
    =
    \frac{1}{\alpha(t)^2\int u^{\alpha(t)}\,d\nu_t}
    \bigg[
    &\alpha'(t)\operatorname{Ent}_{\nu_t}(u^{\alpha(t)})      \\
    &-(\alpha(t)-1)
    \int_M\frac{|\nabla u^{\alpha(t)}|^2}{u^{\alpha(t)}}\,d\nu_t
    \bigg].
\end{aligned}     
\end{align}
It follows by differentiating \(\alpha(t)^{-1}\log\int_M u^{\alpha(t)}\,d\nu_t\), using \(\frac{d}{dt}\int_M h\,d\nu_t = \int_M \Box h\,d\nu_t.\)

\begin{theorem}[Reverse hypercontractivity]
\label{thm:reverse-hypercontractivity}
    Let \(0<\tau_1<\tau_2,\) \(s_i=t_0-\tau_i.\) Let \(u>0\) be a smooth positive solution of the heat equation on \(M\times[s_2,s_1]\). If \(0<p\le q<1\) and
    \begin{align}
        \frac{\tau_2}{\tau_1}
        \ge
        \frac{1-p}{1-q},
        \label{eq:reverse-threshold}
    \end{align}
    then
    \begin{align}
        \|u(\cdot,s_1)\|_{L^p(\nu_{s_1})}
        \ge
        \|u(\cdot,s_2)\|_{L^q(\nu_{s_2})}.
        \label{eq:reverse-hypercontractivity}
    \end{align}
\end{theorem}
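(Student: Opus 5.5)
The plan is Gross's argument run with a time-dependent exponent: choose an exponent \(\alpha(t)\in(0,1)\) on \([s_2,s_1]\) with \(\alpha(s_2)=q\) and \(\alpha(s_1)=p\), and show that \(t\mapsto\log\|u(\cdot,t)\|_{L^{\alpha(t)}(\nu_t)}\) is nondecreasing. The only analytic input is the Hein--Naber log-Sobolev inequality, Proposition~\ref{prop:GI-implies-HN-LSI}, applied at each intermediate time \(t\). At time \(t\) the measure \(\nu_t=\nu_{x_0,t_0;t}\) is a conjugate heat-kernel measure with time gap \(\tau(t)=t_0-t\). So for \(w=u^{\alpha(t)}>0\) it gives
\begin{align*}
    \operatorname{Ent}_{\nu_t}(w)\le \tau(t)\int_M\frac{|\nabla w|^2}{w}\,d\nu_t .
\end{align*}

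First, I insert this into the identity \eqref{eq:gross-identity}. Since \(\alpha<1\), the Fisher-information term enters as \(+(1-\alpha)\int|\nabla w|^2/w\,d\nu_t\ge0\). There are two cases for the entropy term.
\begin{itemize}
    \item If \(\alpha'(t)\ge0\), then \(\alpha'\operatorname{Ent}_{\nu_t}(w)\ge0\) because entropy is nonnegative, so the bracket is already nonnegative.
    \item If \(\alpha'(t)<0\), the log-Sobolev inequality gives \(\alpha'\operatorname{Ent}_{\nu_t}(w)\ge \alpha'\tau\int|\nabla w|^2/w\,d\nu_t\).
\end{itemize}
In either case the bracket is at least
\begin{align*}
    \bigl[(1-\alpha(t))+\alpha'(t)\tau(t)\bigr]\int_M\frac{|\nabla w|^2}{w}\,d\nu_t .
\end{align*}
Monotonicity therefore follows once \((1-\alpha)+\alpha'\tau\ge0\) on \([s_2,s_1]\).

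Next, I rewrite the condition. Put \(\beta=1-\alpha\). Since \(\tau'=-1\), the condition \(\beta-\beta'\tau\ge0\) is the same as
\begin{align*}
    \frac{d}{dt}\bigl(\beta(t)\tau(t)\bigr)=\beta'\tau-\beta\le0 .
\end{align*}
So I need \(\beta\tau\) nonincreasing from its value \((1-q)\tau_2\) at \(t=s_2\) to its value \((1-p)\tau_1\) at \(t=s_1\). The threshold \eqref{eq:reverse-threshold} says exactly that \((1-p)\tau_1\le(1-q)\tau_2\).

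For the concrete choice, let \(c(t)\) be the affine function of \(t\) with \(c(s_2)=(1-q)\tau_2\) and \(c(s_1)=(1-p)\tau_1\), and set \(\beta(t)=c(t)/\tau(t)\). Then \(\beta\tau=c\) is nonincreasing. Also \(\beta\) is a ratio of affine functions with positive denominator on the interval, hence monotone. So \(\beta\) stays between its endpoint values \(1-q\) and \(1-p\), both in \((0,1)\), and \(\alpha=1-\beta\in(0,1)\) throughout, which keeps \eqref{eq:gross-identity} valid. Integrating the monotonicity from \(s_2\) to \(s_1\) gives \eqref{eq:reverse-hypercontractivity}.

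The main point needing care is the sign bookkeeping. The step \(\alpha'\operatorname{Ent}\ge\alpha'\tau\cdot\text{Fisher}\) is used only when \(\alpha'<0\), and the factor \(-(\alpha-1)\) is positive only because \(\alpha<1\). One also has to check that the constant \(\tau(t)=t_0-t\) in the log-Sobolev inequality is the correct one at each intermediate time. Smoothness and positivity of \(u\) on the closed manifold justify differentiating under the integral.
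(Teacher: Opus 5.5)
Your proof is correct and is essentially the paper's argument: Gross's identity \eqref{eq:gross-identity} plus the Hein--Naber log-Sobolev inequality at each intermediate time. In both, the exponent is \(\alpha(t)=1-c(t)/(t_0-t)\) and monotonicity comes from \(c'\le0\). The only difference is that you interpolate \(c\) affinely to reach the strict threshold in one pass, whereas the paper takes \(c\equiv(1-q)\tau_2\) for the equality case and then appends the fixed-exponent monotonicity of \(\int_M u^p\,d\nu_t\) (the case \(\alpha'=0\) of your criterion); incidentally, your claimed lower bound for the bracket is wrong when \(\alpha'>0\), but that case needs only nonnegativity and never occurs, since your \(\alpha\) is nonincreasing.
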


\begin{proof}
    We first prove the equality case in \eqref{eq:reverse-threshold}. Put \(c=(1-q)\tau_2,\) \(\alpha(t)=1-\frac{c}{t_0-t}.\) Then \(\alpha(s_2)=q\), \(\alpha(s_1)=p\), and
    \begin{align}
        \alpha'(t)<0,
        \qquad
        \alpha'(t)(t_0-t)=\alpha(t)-1.
        \label{eq:alpha-relation}
    \end{align}
    Using \eqref{eq:gross-identity} and then the log-Sobolev inequality \eqref{eq:HN-LSI-w}, we get
    \begin{align*}
        \alpha'(t)\operatorname{Ent}_{\nu_t}(u^{\alpha(t)})
        \ge
        \alpha'(t)(t_0-t)
        \int_M
        \frac{|\nabla u^{\alpha(t)}|^2}{u^{\alpha(t)}}\,d\nu_t.
    \end{align*}
    By \eqref{eq:alpha-relation}, the bracket in \eqref{eq:gross-identity} is nonnegative. Hence \(t\longmapsto \|u(\cdot,t)\|_{L^{\alpha(t)}(\nu_t)}\) is nondecreasing on \([s_2,s_1]\). Evaluating at the endpoints gives \eqref{eq:reverse-hypercontractivity} in the equality case.
    
    Now assume only \eqref{eq:reverse-threshold}. Set \(\tau_1'=\tau_2\frac{1-q}{1-p}.\) Then \(\tau_1\le\tau_1'\). If \(\tau_1'<\tau_2\), the equality case applied on \([t_0-\tau_2,t_0-\tau_1']\) gives
    \begin{align*}
        \|u(\cdot,t_0-\tau_1')\|_{L^p(\nu_{t_0-\tau_1'})}
        \ge
        \|u(\cdot,s_2)\|_{L^q(\nu_{s_2})}.
    \end{align*}
    If \(\tau_1'=\tau_2\), the same inequality is simply the identity with \(p=q\).
    
    It remains to move from \(t_0-\tau_1'\) to \(s_1\) with the fixed exponent \(p<1\). Since
    \begin{align*}
        \frac{d}{dt}\int_M u^p\,d\nu_t
        =
        -p(p-1)\int_M u^{p-2}|\nabla u|^2\,d\nu_t
        \ge0,
    \end{align*}
    the quantity \(\|u(\cdot,t)\|_{L^p(\nu_t)}\) is nondecreasing in \(t\). Therefore
    \begin{align*}
        \|u(\cdot,s_1)\|_{L^p(\nu_{s_1})}
        \ge
        \|u(\cdot,t_0-\tau_1')\|_{L^p(\nu_{t_0-\tau_1'})},
    \end{align*}
    and the theorem follows.
\end{proof}

\begin{remark}
    The threshold in \eqref{eq:reverse-threshold} is forced by the flat Gaussian model. In the static Euclidean flow on \(\mathbb R\), based at \((0,0)\), one has \(d\nu_{-\tau}(y)=(4\pi\tau)^{-1/2}e^{-y^2/4\tau}\,dy\). Taking \(u(y,-\tau_2)=e^{\lambda y}\), the heat evolution gives \(u(y,-\tau_1)=e^{\lambda^2(\tau_2-\tau_1)}e^{\lambda y}\), while \(\|e^{\lambda y}\|_{L^a(\nu_{-\tau})} = e^{a\lambda^2\tau}.\) Thus
    \begin{align*}
        \|u(\cdot,-\tau_1)\|_{L^p(\nu_{-\tau_1})}
        =
        \exp\left(\lambda^2(\tau_2+(p-1)\tau_1)\right),
        \qquad
        \|u(\cdot,-\tau_2)\|_{L^q(\nu_{-\tau_2})}
        =
        \exp(q\lambda^2\tau_2).
    \end{align*}
    If \eqref{eq:reverse-hypercontractivity} holds for all \(\lambda\), then \(\tau_2+(p-1)\tau_1\ge q\tau_2\), equivalently \(\frac{\tau_2}{\tau_1}\ge\frac{1-p}{1-q}.\)
\end{remark}

\section{A foray into the path-space}
\label{sec:path-space-bobkov}

We finish with a path-space version of Bobkov inequality. The path-space inequality is obtained by tensorizing the finite-dimensional inequality along the Markov structure of parabolic Wiener measure, in the spirit of Bakry--Ledoux \cite{MR1374200}. The only geometric input beyond the finite-dimensional Bobkov inequality is the Haslhofer--Naber path-space gradient estimate \((R2)\) \cite{MR3790068}, which controls the spatial gradient of a conditional expectation by the future parallel gradient. The argument is as follows. For a cylinder function, we split the path at its first observation time. The future part is estimated by induction, while the first time-slice is estimated by the finite-dimensional Bobkov inequality for the corresponding conjugate heat-kernel measure. The Haslhofer--Naber gradient estimate then allows these two contributions to be recombined into the full \(\mathcal H\)-gradient. This gives a Bobkov inequality on parabolic path space and, by relaxation, the corresponding Gaussian isoperimetric inequality for path-space sets.

Let \((M^n,g_t)_{t\in[0,T]}\) be a smooth closed Ricci flow. Fix a spacetime point \((x,T)\). Let \(P_{(x,T)}M\) denote the parabolic path space based at \((x,T)\). A path is written
\begin{align*}
    \omega=\{\omega_\sigma\}_{0\le\sigma\le T},
    \qquad
    \omega_\sigma=(X_\sigma,T-\sigma),
    \qquad
    X_0=x.
\end{align*}
Let \(\Gamma_{(x,T)}\) be the corresponding Wiener measure. In the normalization of Haslhofer--Naber \cite{MR3790068}, \(X_\sigma\) is Brownian motion run backward through the evolving metrics \(g_{T-\sigma}\), with generator \(\Delta_{g_{T-\sigma}}\).

For a smooth cylinder function \(F(\omega)=u(X_{\tau_1},\ldots,X_{\tau_k}),\) \(0<\tau_1<\cdots<\tau_k\le T,\) let \(\mathcal P_\sigma:T_{X_\sigma}M\longrightarrow T_xM\) denote stochastic parallel transport. We use the Haslhofer--Naber convention for parallel gradients:
\begin{align*}
    \nabla^\parallel_\sigma F
    =
    \sum_{\tau_i\ge\sigma}
    \mathcal P_{\tau_i}\nabla_i u,
    \qquad
    0\le\sigma\le T.
\end{align*}
The Malliavin \(\mathcal H\)-gradient is normalized by
\begin{align*}
    |\nabla^{\mathcal H}F|_{\mathcal H}^2
    =
    \int_0^T|\nabla^\parallel_\sigma F|^2\,d\sigma.
\end{align*}
For one-time cylinder functions \(F(\omega)=f(X_\tau)\), this gives
\begin{align}\label{eq:one-time-H-gradient}
    |\nabla^{\mathcal H}F|_{\mathcal H}^2 = \tau|\nabla f|_{g_{T-\tau}}^2(X_\tau). 
\end{align}
We write \(\Sigma_\sigma\) for the \(\sigma\)-algebra generated by the path up to time \(\sigma\). Unless otherwise indicated, all expectations are taken with respect to \(\Gamma_{(x,T)}\).

We shall use the following form of the Haslhofer--Naber gradient estimate \((R2)\). If \((M,g_t)\) evolves by Ricci flow, then for every cylinder function \(F\) on the path space based at \((y,T')\),
\begin{align}
    \left|
    \nabla_y
    \mathbb E_{(y,T')}[F]
    \right|
    \le
    \mathbb E_{(y,T')}
    \left[
    |\nabla^\parallel_0F|
    \right].
    \label{eq:HN-R2-used} \tag{R2}
\end{align}
Here \(\nabla^\parallel\) denotes the parallel gradient on the based path space under consideration.

\begin{lemma}[First-step gradient estimate]
\label{lem:first-step-gradient}
    Let \(F(\omega)=u(X_{\tau_1},\ldots,X_{\tau_k}),\) \(0<\tau_1<\cdots<\tau_k\le T,\) and set \(G=\mathbb E_{(x,T)}[F\mid\Sigma_{\tau_1}].\) Then \(G=g(X_{\tau_1})\) for a smooth function \(g\), and
    \begin{align}
        |\nabla g|_{g_{T-\tau_1}}(X_{\tau_1})
        \le
        \mathbb E_{(x,T)}
        \left[
        |\nabla^\parallel_{\tau_1}F|
        \,\middle|\,
        \Sigma_{\tau_1}
        \right].
        \label{eq:first-step-gradient}
    \end{align}
\end{lemma}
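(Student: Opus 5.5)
The plan is to use the Markov property of parabolic Wiener measure to identify \(g\) as a path-space expectation started at \((X_{\tau_1},T-\tau_1)\). Then we apply \eqref{eq:HN-R2-used} on the path space based at that point, and translate its parallel gradient back to the original path space.

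\emph{Step 1: identify \(g\).} Define, for \(y\in M\),
\begin{align*}
    g(y)=\mathbb E_{(y,T-\tau_1)}\bigl[\tilde F\bigr],
    \qquad
    \tilde F(\tilde\omega)=u\bigl(y',\tilde X_{\tau_2-\tau_1},\ldots,\tilde X_{\tau_k-\tau_1}\bigr)\big|_{y'=\tilde X_0=y},
\end{align*}
where \(\tilde\omega\) is a path based at \((y,T-\tau_1)\). By the Markov property of the time-inhomogeneous diffusion with generator \(\Delta_{g_{T-\sigma}}\), we have \(G=g(X_{\tau_1})\) almost surely. Smoothness of \(g\) follows by writing \(g\) as iterated integrals against the smooth heat kernels \(K(\cdot,T-\tau_i;\cdot,T-\tau_{i+1})\), that is, as nested heat evolutions of \(u\). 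Note that \(\tilde F\) is a cylinder function on the new path space with observation times \(0=\tilde\tau_1<\tilde\tau_2<\cdots\). The first variable is evaluated at the base point, which is harmless: \eqref{eq:HN-R2-used} allows times \(\ge0\), and the \(\tau=0\) coordinate contributes \(\nabla_1u\) with trivial transport.

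\emph{Step 2: apply \eqref{eq:HN-R2-used}.} At \(y=X_{\tau_1}\) this gives
\begin{align*}
    |\nabla g|(y)\le \mathbb E_{(y,T-\tau_1)}\bigl[|\tilde\nabla^\parallel_0\tilde F|\bigr],
    \qquad
    \tilde\nabla^\parallel_0\tilde F=\sum_{i\ge1}\tilde{\mathcal P}_{\tau_i-\tau_1}\nabla_iu,
\end{align*}
where \(\tilde{\mathcal P}\) is stochastic parallel transport back to \(T_yM\).

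\emph{Step 3: transfer back.} On the original path space, stochastic parallel transport is multiplicative in the sense that \(\mathcal P_{\tau_i}=\mathcal P_{\tau_1}\circ\tilde{\mathcal P}_{\tau_i-\tau_1}\), where the shifted transport is computed along the post-\(\tau_1\) path. Hence
\begin{align*}
    \nabla^\parallel_{\tau_1}F=\mathcal P_{\tau_1}\tilde\nabla^\parallel_0\tilde F.
\end{align*}
Since \(\mathcal P_{\tau_1}\) is an isometry (from \(g_{T-\tau_1}\) at \(X_{\tau_1}\) to \(g_T\) at \(x\)), the norms agree. By the Markov property once more, the conditional law of the shifted path given \(\Sigma_{\tau_1}\) is \(\Gamma_{(X_{\tau_1},T-\tau_1)}\). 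Therefore
\begin{align*}
    \mathbb E_{(y,T-\tau_1)}\bigl[|\tilde\nabla^\parallel_0\tilde F|\bigr]\Big|_{y=X_{\tau_1}}
    =\mathbb E\bigl[|\nabla^\parallel_{\tau_1}F|\,\big|\,\Sigma_{\tau_1}\bigr],
\end{align*}
which yields \eqref{eq:first-step-gradient}.

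The main obstacle is Step 3. It requires care with the conventions: the cocycle identity for stochastic parallel transport in the Haslhofer--Naber construction (horizontal lift on the frame bundle with the Ricci-flow-compatible connection), and the fact that the norm of \(\mathcal P_\sigma v\) is measured in \(g_{T-\sigma}\). I would verify the cocycle identity directly from the SDE defining the horizontal lift, whose solution started at time \(\tau_1\) from frame \(U_{\tau_1}\) equals \(U_{\tau_1}\) composed with the solution started from an orthonormal frame. The remaining steps are routine applications of the Markov property.
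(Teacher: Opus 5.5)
Your proposal is correct and follows the paper's proof essentially step for step. Both arguments use the Markov property to write \(g(y)=\mathbb E_{(y,T-\tau_1)}[\widehat F]\) with \(\widehat F(\eta)=u(\eta_0,\eta_{\tau_2-\tau_1},\ldots,\eta_{\tau_k-\tau_1})\), apply \eqref{eq:HN-R2-used} on the future path space, use the concatenation identity \(\mathcal P_{\tau_1}\nabla^\parallel_0\widehat F=\nabla^\parallel_{\tau_1}F\) together with the isometry of \(\mathcal P_{\tau_1}\), and then condition on \(\Sigma_{\tau_1}\). Your extra justifications, namely the cocycle identity for stochastic parallel transport and the smoothness of \(g\) via nested heat evolutions, fill in details the paper leaves implicit.
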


\begin{proof}
    By the Markov property, \(G=g(X_{\tau_1}),\) where \(g(y) = \mathbb E_{(y,T-\tau_1)} \left[u\left(y,Y_{\tau_2-\tau_1},\ldots,Y_{\tau_k-\tau_1}\right) \right].\) Equivalently, on the path space based at \((y,T-\tau_1)\), define
    \begin{align*}
        \widehat F(\eta)
        =
        u\left(
        \eta_0,\eta_{\tau_2-\tau_1},\ldots,\eta_{\tau_k-\tau_1}
        \right).
    \end{align*}
    Then \(g(y)=\mathbb E_{(y,T-\tau_1)}[\widehat F].\) Applying \eqref{eq:HN-R2-used} on the future path space based at \((y,T-\tau_1)\), we get
    \begin{align*}
        |\nabla g|_{g_{T-\tau_1}}(y)
        \le
        \mathbb E_{(y,T-\tau_1)}
        \left[
        |\nabla^\parallel_0\widehat F|
        \right].
    \end{align*}
    Under concatenation of the initial segment \(\omega|_{[0,\tau_1]}\) with a future path based at \((X_{\tau_1},T-\tau_1)\), the Haslhofer--Naber formula for parallel gradients gives
    \begin{align*}
        \mathcal P_{\tau_1}
        \bigl(\nabla^\parallel_0\widehat F\bigr)
        =
        \nabla^\parallel_{\tau_1}F.
    \end{align*}
    Since \(\mathcal P_{\tau_1}\) is an isometry, the norms agree. Evaluating at \(y=X_{\tau_1}\) and conditioning on \(\Sigma_{\tau_1}\) proves \eqref{eq:first-step-gradient}.
\end{proof}

We now prove the path-space Bobkov inequality. The normalization is chosen so that one-time cylinder functions recover the finite-dimensional inequality.

\begin{theorem}[Path-space Bobkov inequality]
\label{thm:path-space-bobkov}
    For every smooth cylinder function \(F:P_{(x,T)}M\longrightarrow[0,1],\) one has
    \begin{align}
        I\left(
        \int_{P_{(x,T)}M}F\,d\Gamma_{(x,T)}
        \right)
        \le
        \int_{P_{(x,T)}M}
        \sqrt{
        I(F)^2+2|\nabla^{\mathcal H}F|_{\mathcal H}^2
        }
        \,d\Gamma_{(x,T)}.
        \label{eq:path-space-bobkov}
    \end{align}
    The constant \(2\) is sharp as a universal constant.
\end{theorem}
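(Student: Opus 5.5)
Our plan is to prove \eqref{eq:path-space-bobkov} by induction on the number \(k\) of observation times. At each step we condition on the first time-slice, following the outline at the start of this section. First, sharpness. For a one-time cylinder function \(F=f(X_\tau)\), formula \eqref{eq:one-time-H-gradient} gives \(2|\nabla^{\mathcal H}F|^2_{\mathcal H}=2\tau|\nabla f|^2_{g_{T-\tau}}\). The law of \(X_\tau\) is \(\nu_{x,T;T-\tau}\). So \eqref{eq:path-space-bobkov} reduces exactly to Theorem~\ref{thm:Bobkov} with time difference \(\tau\), and the static Euclidean flow with half-space test functions shows that \(2\) cannot be improved. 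This also settles the base case \(k=1\). As in Theorem~\ref{thm:Bobkov}, we may reduce to \(\varepsilon\le F\le1-\varepsilon\).

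For the inductive step, let \(F=u(X_{\tau_1},\dots,X_{\tau_k})\) and split \(|\nabla^{\mathcal H}F|^2_{\mathcal H}=\int_0^{\tau_1}|\nabla^\parallel_\sigma F|^2\,d\sigma+\int_{\tau_1}^T|\nabla^\parallel_\sigma F|^2\,d\sigma\). For \(\sigma\le\tau_1\), every observation time satisfies \(\tau_i\ge\sigma\), so \(\nabla^\parallel_\sigma F=\nabla^\parallel_{\tau_1}F\). The first integral therefore equals \(\tau_1|\nabla^\parallel_{\tau_1}F|^2\). Condition on \(\Sigma_{\tau_1}\) with \(X_{\tau_1}=y\) fixed. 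The future part is a cylinder function \(\widehat F\) of at most \(k-1\) future times on the path space based at \((y,T-\tau_1)\). Its \(\mathcal H\)-gradient over \([0,T-\tau_1]\) is the transported version of \(\int_{\tau_1}^T|\nabla^\parallel_\sigma F|^2\,d\sigma\); the \(\sigma=0\) term includes the \(y\)-derivative, but this is a null set in the integral. If \(\tau_2\) coincides with the initial slice we fold it in, so the inductive hypothesis applies to \(\widehat F\) and gives
\[
I(g(y))\le \mathbb E\Bigl[\sqrt{I(F)^2+2B}\,\Big|\,\Sigma_{\tau_1}\Bigr],\qquad B=\int_{\tau_1}^T|\nabla^\parallel_\sigma F|^2\,d\sigma,
\]
where \(g\) is as in Lemma~\ref{lem:first-step-gradient}. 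Next apply Theorem~\ref{thm:Bobkov} to \(g\) and the measure \(\nu_{x,T;T-\tau_1}\):
\[
I(\mathbb E F)=I(\mathbb E g(X_{\tau_1}))\le \mathbb E\sqrt{I(g)^2+2\tau_1|\nabla g|^2}.
\]

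The main obstacle is the recombination. We must show pointwise that
\[
\sqrt{I(g)^2+2\tau_1|\nabla g|^2}\le \mathbb E\Bigl[\sqrt{I(F)^2+2B+2\tau_1|\nabla^\parallel_{\tau_1}F|^2}\,\Big|\,\Sigma_{\tau_1}\Bigr].
\]
We use the inductive bound \(I(g)\le\mathbb E[\alpha\mid\Sigma_{\tau_1}]\) with \(\alpha=\sqrt{I(F)^2+2B}\), and Lemma~\ref{lem:first-step-gradient}, \(\sqrt{2\tau_1}|\nabla g|\le\mathbb E[\sqrt{2\tau_1}|\nabla^\parallel_{\tau_1}F|\mid\Sigma_{\tau_1}]\). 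Then the Minkowski (triangle) inequality for the Euclidean norm on \(\mathbb R^2\), namely \(|(\mathbb E\alpha,\mathbb E\beta)|\le\mathbb E|(\alpha,\beta)|\), gives the claim, since \(\alpha^2+\beta^2\) is exactly the full integrand. Monotonicity of \(\sqrt{\cdot}\) in each entry is used throughout. Taking expectations closes the induction. Smoothness of \(g\) and measurability issues under concatenation are handled by Lemma~\ref{lem:first-step-gradient} and the Markov property.
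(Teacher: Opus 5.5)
Your proposal is correct and follows the paper's proof step for step. The steps are the same: induction on the number of observation times, the finite-dimensional Bobkov inequality for $g$ at the first slice, the inductive hypothesis for the frozen future cylinder function, Lemma~\ref{lem:first-step-gradient} for $|\nabla g|$, conditional Minkowski in $\mathbb R^2$, the identity $\nabla^\parallel_\sigma F=\nabla^\parallel_{\tau_1}F$ for $\sigma\le\tau_1$ to reassemble the full $\mathcal H$-energy, and sharpness from one-time cylinder functions in the static Euclidean flow (your remark about $\tau_2$ coinciding with the initial slice is vacuous since $\tau_2>\tau_1$, and the $\varepsilon$-truncation is unnecessary but harmless).
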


\begin{proof}
    We argue by induction on the number of positive time-slices on which the cylinder function depends. First suppose \(F(\omega)=f(X_\tau).\) The pushforward of \(\Gamma_{(x,T)}\) under \(X_\tau\) is the conjugate heat-kernel measure \(\nu_{x,T;T-\tau}\). Hence Theorem~\ref{thm:Bobkov} gives
    \begin{align*}
        I\left(\int F\,d\Gamma_{(x,T)}\right)
        \le
        \int
        \sqrt{
        I(f)^2+2\tau|\nabla f|_{g_{T-\tau}}^2
        }
        \,d\nu_{x,T;T-\tau}.
    \end{align*}
    By \eqref{eq:one-time-H-gradient}, this is exactly \eqref{eq:path-space-bobkov}. This proves the base case.
    
    Now assume the result for cylinder functions depending on at most \(k-1\) positive time-slices, and let \(F(\omega)=u(X_{\tau_1},\ldots,X_{\tau_k}),\) \(0<\tau_1<\cdots<\tau_k\le T.\) Set \(G=\mathbb E[F\mid\Sigma_{\tau_1}]=g(X_{\tau_1}).\) The pushforward of \(\Gamma_{(x,T)}\) under \(X_{\tau_1}\) is \(\nu_{x,T;T-\tau_1}\). Applying the finite-dimensional Bobkov inequality to \(g\), we get
    \begin{align}
        I(\mathbb E F)
        =
        I(\mathbb E G)
        \le
        \mathbb E
        \sqrt{
        I(G)^2+2\tau_1|\nabla g|_{g_{T-\tau_1}}^2(X_{\tau_1})
        }.
        \label{eq:path-first-step}
    \end{align}
    
    We next estimate the two terms inside the square root. Fix \(y\in M\), and consider the future path space based at \((y,T-\tau_1)\). For the induction step, we freeze the first variable and define
    \begin{align*}
        F_y^+(\eta)
        =
        u\left(
        y,\eta_{\tau_2-\tau_1},\ldots,\eta_{\tau_k-\tau_1}
        \right).
    \end{align*}
    Then \(g(y) = \mathbb E_{(y,T-\tau_1)}[F_y^+].\) Applying the induction hypothesis to \(F_y^+\), and then evaluating at \(y=X_{\tau_1}\), gives
    \begin{align}
        I(G)
        \le
        \mathbb E
        \left[
        \mathcal A
        \,\middle|\,
        \Sigma_{\tau_1}
        \right],
        \label{eq:path-induction}
    \end{align}
    where
    \begin{align}
        \mathcal A
        \coloneqq 
        \sqrt{
        I(F)^2
        +
        2\int_{\tau_1}^T
        |\nabla^\parallel_\sigma F|^2\,d\sigma
        }.
        \label{eq:path-A-definition}
    \end{align}
    Here we used the compatibility of the future \(\mathcal H\)-gradient under concatenation: after the first time \(\tau_1\), the future parallel gradients are precisely the restrictions of \(\nabla^\parallel_\sigma F\) to \(\sigma\in[\tau_1,T]\).
    
    On the other hand, by Lemma~\ref{lem:first-step-gradient},
    \begin{align}
        |\nabla g|_{g_{T-\tau_1}}(X_{\tau_1})
        \le
        \mathbb E
        \left[
        |\nabla^\parallel_{\tau_1}F|
        \,\middle|\,
        \Sigma_{\tau_1}
        \right].
        \label{eq:path-gradient-step}
    \end{align}
    Combining \eqref{eq:path-induction} and \eqref{eq:path-gradient-step}, and using Minkowski inequality in \(\mathbb R^2\), gives
    \begin{align}\label{eq:path-minkowski}
    \begin{aligned}
        \sqrt{
        I(G)^2+2\tau_1|\nabla g|^2(X_{\tau_1})
        } 
        &\le
        \sqrt{
        \left(\mathbb E[\mathcal A\mid\Sigma_{\tau_1}]\right)^2
        +
        \left(
        \mathbb E[
        \sqrt{2\tau_1}|\nabla^\parallel_{\tau_1}F|
        \mid\Sigma_{\tau_1}]
        \right)^2
        } \\
        &\le
        \mathbb E\left[
        \sqrt{
        \mathcal A^2+2\tau_1|\nabla^\parallel_{\tau_1}F|^2
        }
        \,\middle|\,\Sigma_{\tau_1}
        \right].
    \end{aligned}
    \end{align}
    For \(0\le\sigma\le\tau_1\), up to the irrelevant endpoint convention, \(\nabla^\parallel_\sigma F = \nabla^\parallel_{\tau_1}F.\) Therefore
    \begin{align*}
        \mathcal A^2+2\tau_1|\nabla^\parallel_{\tau_1}F|^2
        &=
        I(F)^2
        +
        2\int_{\tau_1}^T|\nabla^\parallel_\sigma F|^2\,d\sigma
        +
        2\int_0^{\tau_1}|\nabla^\parallel_\sigma F|^2\,d\sigma =
        I(F)^2+
        2\int_0^T|\nabla^\parallel_\sigma F|^2\,d\sigma              \\
        &=
        I(F)^2+2|\nabla^{\mathcal H}F|_{\mathcal H}^2.
    \end{align*}
    Substituting this into \eqref{eq:path-minkowski}, then into \eqref{eq:path-first-step}, and taking expectations proves \eqref{eq:path-space-bobkov}. The induction is complete.
    
    Sharpness follows from the one-time cylinder case in the static Euclidean flow. There the inequality reduces to the sharp finite-dimensional Gaussian Bobkov inequality for \(N(0,2\tau)\). Since the one-time \(\mathcal H\)-energy is \(\tau|\nabla f|^2\), any coefficient smaller than \(2\) would contradict the sharp finite-dimensional Gaussian constant.
\end{proof}

The path-space isoperimetric inequality follows from the Bobkov inequality by the same relaxation argument used in the finite-dimensional case, see Theorem \ref{thm:intro-GI}. For a Borel set \(A\subset P_{(x,T)}M\), define its \(\mathcal H\)-perimeter by
\begin{align*}
    \operatorname{Per}^{\mathcal H}_{\Gamma}(A)
    =
    \inf
    \left\{
    \liminf_{j\to\infty}
    \int
    |\nabla^{\mathcal H}F_j|_{\mathcal H}\,d\Gamma_{(x,T)}
    \right\},
\end{align*}
where the infimum is over smooth cylinder functions \(0\le F_j\le1\) such that \(F_j\to \mathbf 1_A\) in \(L^1(\Gamma_{(x,T)}).\)

\begin{corollary}[Path-space Gaussian isoperimetry]
\label{cor:path-space-isoperimetry}
    For every Borel set \(A\subset P_{(x,T)}M\),
    \begin{align}
        \operatorname{Per}^{\mathcal H}_{\Gamma}(A)
        \ge
        \frac1{\sqrt2}I(\Gamma_{(x,T)}(A)).
        \label{eq:path-space-isoperimetry}
    \end{align}
\end{corollary}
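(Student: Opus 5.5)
This follows the relaxation argument used for Theorem~\ref{thm:intro-GI}, with Theorem~\ref{thm:path-space-bobkov} in place of Theorem~\ref{thm:Bobkov}. If \(\operatorname{Per}^{\mathcal H}_{\Gamma}(A)=+\infty\) there is nothing to prove. Otherwise, fix \(\varepsilon>0\). By the definition of \(\mathcal H\)-perimeter, choose smooth cylinder functions \(0\le F_j\le 1\) with \(F_j\to\mathbf 1_A\) in \(L^1(\Gamma_{(x,T)})\) and
\begin{align*}
    \liminf_{j\to\infty}\int|\nabla^{\mathcal H}F_j|_{\mathcal H}\,d\Gamma_{(x,T)}
    \le
    \operatorname{Per}^{\mathcal H}_{\Gamma}(A)+\varepsilon.
\end{align*}
Passing to a subsequence, we may assume the \(\liminf\) is a limit.

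Apply \eqref{eq:path-space-bobkov} to each \(F_j\). Then use the elementary bound \(\sqrt{\alpha^2+\beta^2}\le\alpha+\beta\) for \(\alpha,\beta\ge0\) to split the integrand:
\begin{align*}
    I\left(\int F_j\,d\Gamma_{(x,T)}\right)
    \le
    \int I(F_j)\,d\Gamma_{(x,T)}
    +
    \sqrt2\int|\nabla^{\mathcal H}F_j|_{\mathcal H}\,d\Gamma_{(x,T)}.
\end{align*}

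Now let \(j\to\infty\). Since \(F_j\to\mathbf 1_A\) in \(L^1\), we have \(\int F_j\,d\Gamma\to\Gamma_{(x,T)}(A)\), and \(I\) is continuous on \([0,1]\), so the left side converges to \(I(\Gamma_{(x,T)}(A))\).

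For the first term on the right, \(F_j\to\mathbf 1_A\) in probability. Since \(I\) is continuous with \(I(0)=I(1)=0\), we get \(I(F_j)\to0\) in probability. As \(I\) is bounded by \(1/\sqrt{2\pi}\), bounded convergence (applied along subsequences) gives \(\int I(F_j)\,d\Gamma\to0\).

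Combining these limits yields
\begin{align*}
    I(\Gamma_{(x,T)}(A))\le\sqrt2\bigl(\operatorname{Per}^{\mathcal H}_{\Gamma}(A)+\varepsilon\bigr).
\end{align*}
Letting \(\varepsilon\downarrow0\) gives \eqref{eq:path-space-isoperimetry}.

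I do not expect any real obstacle. The only point needing care is that the relaxation approximants are exactly the smooth cylinder functions with values in \([0,1]\) to which Theorem~\ref{thm:path-space-bobkov} applies. This holds by the definition of \(\operatorname{Per}^{\mathcal H}_{\Gamma}\), so no further density argument on path space is needed.
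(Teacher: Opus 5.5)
Your proposal is correct and is essentially the paper's argument. The paper obtains the corollary by repeating the relaxation proof of Theorem~\ref{thm:intro-GI}, with Theorem~\ref{thm:path-space-bobkov} in place of Theorem~\ref{thm:Bobkov}: split the square root via \(\sqrt{\alpha^2+\beta^2}\le\alpha+\beta\), kill \(\int I(F_j)\,d\Gamma\) by convergence in measure, and let \(\varepsilon\downarrow0\).
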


\begin{remark}
    For one-time cylinder sets, Corollary~\ref{cor:path-space-isoperimetry} recovers Theorem~\ref{thm:intro-GI}. Thus the path-space inequality extends the sharp Gaussian isoperimetric inequality for conjugate heat-kernel measures to the parabolic Wiener space.
\end{remark}

\begin{remark}
    Haslhofer and Naber prove path-space log-Sobolev and spectral-gap estimates directly in \cite{MR3790068}. The path-space Bobkov inequality above is a corresponding isoperimetric statement in the same path-space framework.
\end{remark}


\bibliographystyle{amsalpha}
\bibliography{references}

@article {MR1374200,
    AUTHOR = {Bakry, D. and Ledoux, M.},
     TITLE = {L\'evy-{G}romov's isoperimetric inequality for an
              infinite-dimensional diffusion generator},
   JOURNAL = {Invent. Math.},
  FJOURNAL = {Inventiones Mathematicae},
    VOLUME = {123},
      YEAR = {1996},
    NUMBER = {2},
     PAGES = {259--281},
      ISSN = {0020-9910,1432-1297},
   MRCLASS = {58G32 (47D07 53C21)},
  MRNUMBER = {1374200},
MRREVIEWER = {Ming\ Liao},
       DOI = {10.1007/s002220050026},
       URL = {https://doi.org/10.1007/s002220050026},
}

@article{bamler2020entropy,
  title={Entropy and heat kernel bounds on a Ricci flow background},
  author={Bamler, Richard H},
  journal={arXiv preprint arXiv:2008.07093},
  pages={1-48},
  year={2020}
}

@article{bamler2020structure,
  title={Structure theory of non-collapsed limits of Ricci flows},
  author={Bamler, Richard H},
  journal={arXiv preprint arXiv:2009.03243},
  year={2020},
  pages={1-155}
}

@article {MR1428506,
    AUTHOR = {Bobkov, S. G.},
     TITLE = {An isoperimetric inequality on the discrete cube, and an
              elementary proof of the isoperimetric inequality in {G}auss
              space},
   JOURNAL = {Ann. Probab.},
  FJOURNAL = {The Annals of Probability},
    VOLUME = {25},
      YEAR = {1997},
    NUMBER = {1},
     PAGES = {206--214},
      ISSN = {0091-1798,2168-894X},
   MRCLASS = {60E15 (60G15)},
  MRNUMBER = {1428506},
MRREVIEWER = {Werner\ Linde},
       DOI = {10.1214/aop/1024404285},
       URL = {https://doi.org/10.1214/aop/1024404285},
}

@article {MR399402,
    AUTHOR = {Borell, Christer},
     TITLE = {The {B}runn-{M}inkowski inequality in {G}auss space},
   JOURNAL = {Invent. Math.},
  FJOURNAL = {Inventiones Mathematicae},
    VOLUME = {30},
      YEAR = {1975},
    NUMBER = {2},
     PAGES = {207--216},
      ISSN = {0020-9910,1432-1297},
   MRCLASS = {28A40 (60G15)},
  MRNUMBER = {399402},
MRREVIEWER = {A.\ Badrikian},
       DOI = {10.1007/BF01425510},
       URL = {https://doi.org/10.1007/BF01425510},
}

@article{gross-1975-log-sobolev,
    AUTHOR = {Gross, Leonard},
     TITLE = {Logarithmic {S}obolev inequalities},
   JOURNAL = {Amer. J. Math.},
  FJOURNAL = {American Journal of Mathematics},
    VOLUME = {97},
      YEAR = {1975},
    NUMBER = {4},
     PAGES = {1061--1083},
      ISSN = {0002-9327,1080-6377},
   MRCLASS = {46E35 (81.47)},
  MRNUMBER = {420249},
MRREVIEWER = {R.\ H\o egh-Krohn},
       DOI = {10.2307/2373688},
       URL = {https://doi.org/10.2307/2373688},
}

@article {MR664497,
    AUTHOR = {Hamilton, Richard S.},
     TITLE = {Three-manifolds with positive {R}icci curvature},
   JOURNAL = {J. Differential Geometry},
  FJOURNAL = {Journal of Differential Geometry},
    VOLUME = {17},
      YEAR = {1982},
    NUMBER = {2},
     PAGES = {255--306},
      ISSN = {0022-040X,1945-743X},
   MRCLASS = {53C25 (35K55 58G30)},
  MRNUMBER = {664497},
MRREVIEWER = {J.\ L.\ Kazdan},
       URL = {http://projecteuclid.org/euclid.jdg/1214436922},
}

@book {MR46395,
    AUTHOR = {Hardy, G. H. and Littlewood, J. E. and P\'olya, G.},
     TITLE = {Inequalities},
      NOTE = {2d ed},
 PUBLISHER = {Cambridge, at the University Press, },
      YEAR = {1952},
     PAGES = {xii+324},
   MRCLASS = {27.0X},
  MRNUMBER = {46395},
}

@article {MR3790068,
    AUTHOR = {Haslhofer, Robert and Naber, Aaron},
     TITLE = {Characterizations of the {R}icci flow},
   JOURNAL = {J. Eur. Math. Soc. (JEMS)},
  FJOURNAL = {Journal of the European Mathematical Society (JEMS)},
    VOLUME = {20},
      YEAR = {2018},
    NUMBER = {5},
     PAGES = {1269--1302},
      ISSN = {1435-9855,1435-9863},
   MRCLASS = {53C44 (53C21)},
  MRNUMBER = {3790068},
MRREVIEWER = {Sylvain\ Maillot},
       DOI = {10.4171/JEMS/787},
       URL = {https://doi.org/10.4171/JEMS/787},
}

@article {MR3245102,
    AUTHOR = {Hein, Hans-Joachim and Naber, Aaron},
     TITLE = {New logarithmic {S}obolev inequalities and an
              {$\epsilon$}-regularity theorem for the {R}icci flow},
   JOURNAL = {Comm. Pure Appl. Math.},
  FJOURNAL = {Communications on Pure and Applied Mathematics},
    VOLUME = {67},
      YEAR = {2014},
    NUMBER = {9},
     PAGES = {1543--1561},
      ISSN = {0010-3640,1097-0312},
   MRCLASS = {35A23 (35R01 53C23 53C44)},
  MRNUMBER = {3245102},
MRREVIEWER = {Yu\ Ding},
       DOI = {10.1002/cpa.21474},
       URL = {https://doi.org/10.1002/cpa.21474},
}

@article{perelman2002entropy,
  title={The entropy formula for the Ricci flow and its geometric applications},
  author={Perelman, Grisha},
  journal={arXiv preprint math/0211159},
  pages={1-2},
  year={2002}
}

@book {MR2265633,
    AUTHOR = {Shaked, Moshe and Shanthikumar, J. George},
     TITLE = {Stochastic orders},
    SERIES = {Springer Series in Statistics},
 PUBLISHER = {Springer, New York},
      YEAR = {2007},
     PAGES = {xvi+473},
      ISBN = {978-0-387-32915-4; 0-387-32915-3},
   MRCLASS = {60-02 (60E15 62H05 62N05)},
  MRNUMBER = {2265633},
MRREVIEWER = {B.\ L. S. Prakasa Rao},
       DOI = {10.1007/978-0-387-34675-5},
       URL = {https://doi.org/10.1007/978-0-387-34675-5},
}

@book {MR4573029,
    AUTHOR = {Stroock, Daniel W.},
     TITLE = {Gaussian measures in finite and infinite dimensions},
    SERIES = {Universitext},
 PUBLISHER = {Springer, Cham},
      YEAR = {[2023] \copyright 2023},
     PAGES = {xii+144},
      ISBN = {978-3-031-23121-6; 978-3-031-23122-3},
   MRCLASS = {60-02 (28C20 46G12 60B11 60G15)},
  MRNUMBER = {4573029},
MRREVIEWER = {Ramon\ van Handel},
       DOI = {10.1007/978-3-031-23122-3},
       URL = {https://doi.org/10.1007/978-3-031-23122-3},
}

@article {MR365680,
    AUTHOR = {Sudakov, V. N. and Cirelson, B. S.},
     TITLE = {Extremal properties of half-spaces for spherically invariant
              measures},
      NOTE = {Problems in the theory of probability distributions, II},
   JOURNAL = {Zap. Nau\v cn. Sem. Leningrad. Otdel. Mat. Inst. Steklov.
              (LOMI)},
  FJOURNAL = {Zapiski Nau\v cnyh Seminarov Leningradskogo Otdelenija
              Matemati\v ceskogo Instituta im. V. A. Steklova Akademii Nauk
              SSSR (LOMI)},
    VOLUME = {41},
      YEAR = {1974},
     PAGES = {14--24, 165},
   MRCLASS = {60G15},
  MRNUMBER = {365680},
MRREVIEWER = {R.\ M.\ Dudley},
}

\end{document}